\providecommand{\U}[1]{\protect\rule{.1in}{.1in}}
\newtheorem{theorem}{Theorem}[section]
\newtheorem{lemma}[theorem]{Lemma}
\newtheorem{definition}[theorem]{Definition}
\newtheorem{corollary}[theorem]{Corollary}
\newtheorem{open problem}[theorem]{Open Problem}
\newtheorem{example}[theorem]{Example}
\newtheorem{proposition}[theorem]{Proposition}
\newtheorem{remark}[theorem]{Remark}
\numberwithin{equation}{section}
\begin{document}
\title{Rosette Harmonic Mappings}
\author{Jane McDougall}
\address{Department of Mathematics and Computer Science, Colorado College, Colorado
Springs, Colorado 80903}
\email{jmcdougall@coloradocollege.edu}
\author{Lauren Stierman}
\address{Department of Mathematics and Computer Science, Colorado College, Colorado
Springs, Colorado 80903}
\email{l\_stierman@coloradocollege.edu}
\thanks{The authors were supported in part by Department of Mathematics and Computer
Science summer grant 2017 \& 2019 from Colorado College.}
\date{January 15th 2020}
\subjclass[2010]{ Primary 30C45; Secondary 33C05}
\keywords{Complex Analysis. Harmonic Mappings, Hypergeometric Functions}

\begin{abstract}
A harmonic mapping is a univalent harmonic function of one complex variable.
We define a family of harmonic mappings on the unit disk whose images are
rotationally symmetric ``rosettes" with $n$ cusps or $n$ nodes, $n\geq3$.
These mappings are analogous to the n-cusped hypocycloid, but are modified by
Gauss hypergeometric factors, both in the analytic and co-analytic parts.
Relative rotations by an angle $\beta$ of the analytic and anti-analytic parts
lead to graphs that have cyclic, and in some cases dihedral symmetry of order
$n.$ While the graphs for different $\beta$ can be dissimilar, the cusps are
aligned along axes that are independent of $\beta$. For certain isolated
values of $\beta,$ the boundary function is continuous with arcs of constancy,
and has nodes of interior angle $\pi/2-\pi/n$ instead of cusps.

\end{abstract}
\maketitle

\section{Introduction}

We introduce the rosette harmonic mappings, analogous to the $n$-cusped
hypocycloid mappings. For each integer $n$ where $n\geq3,$ we obtain a family
of mappings, that in many instances have only cyclic rather than dihedral
symmetry. The mappings have $n$ cusps, or in some cases $n$ nodes rather than
cusps. It is interesting to consider one particular mapping for each $n$ in
which the boundary of the mapping is continuous, \ but with arcs of constancy.
Our main goal is to establish the univalence of the rosette harmonic mappings.
Additionally we describe the location and orientation of cusps and nodes. We
also define a fundamental set from which the full graph of a rosette mapping
can be reconstructed, which is useful for computational  efficiency. 

We begin by establishing some notation and standard terminology associated
with planar harmonic mappings. A \textbf{harmonic mapping} $f$ is a complex
valued univalent harmonic function defined on a region in the complex plane $%
\mathbb{C}
$. Harmonic mappings can be arrived at in a variety of ways, for example by
adding different harmonic functions together, or by using the Poisson integral
formula, and more recently by using the shear construction, first described in
\cite{ClunieSheilSmall}. Univalence is not guaranteed however, except for in
the latter approach. For any harmonic mapping $f,$ we write $f=h+\bar{g}$
where $h$ and $g$ are analytic, and call $h$ and $g$ the \textbf{analytic} and
\textbf{co-analytic} parts of $f$, respectively. The decomposition is unique
up to the constant terms of $h$ and $g,$ and $h+\bar{g}$ is known as the
\textbf{canonical decomposition }of $f$\textbf{. \ }Our mappings are defined
on $U,$ the \textbf{open unit disk} in the complex plane. The Jacobian $J_{f}$
of $f$ is given by $J_{f}\left(  z\right)  =\left\vert f_{z}\left(  z\right)
\right\vert ^{2}-\left\vert f_{\bar{z}}\left(  z\right)  \right\vert
^{2}=\left\vert h^{\prime}\left(  z\right)  \right\vert ^{2}-\left\vert
g^{\prime}\left(  z\right)  \right\vert ^{2}$. We say that $f$ is
\textbf{sense-preserving} if $J_{f}\left(  z\right)  >0$ in $U.$ A theorem of
L\v{e}wy \cite{Lewy} states that a harmonic function $f$ is locally one-to-one
if $J_{f}$ is non-vanishing in $U$. Thus $f$ is locally one-to-one and
sense-preserving if and only if $\left\vert g\right\vert <\left\vert
h\right\vert $ and thus, there exists a meromorphic function known as the
\textbf{analytic dilatation} of $f,$ given by $\omega_{f}\left(  z\right)
=g^{\prime}/h^{\prime}.$ Note that the analytic dilatation is related to the
\textbf{complex dilatation }$\mu_{f}=\bar{g}^{\prime}/h^{\prime}$ from the
theory of quasiconformal mappings. We refer here to $\omega_{f}\left(
z\right)  $ simply as the \textbf{dilatation} of $f$ - for more information
see \cite{PeterHMBook} and \cite{ECA}. For a given complex valued function
$f$, we sometimes use the notation $\overline{f(}z)$ to denote $\overline
{f(z)}$, the complex conjugate of the number $f(z)$.

The rosette harmonic mappings of Definition \ref{defnharmonic} are
modifications of a simple harmonic mapping known as the hypocycloid harmonic
mapping, with image under the unit disk bounded by a $n$-cusped hypocycloid.
The symbol $\partial A$ here denotes the topological boundary of the set $A$.

\begin{example}
\label{hypo}Let $n\in%
\mathbb{N}
,$ $n\geq3.$ The \textbf{hypocycloid harmonic mapping} is defined on $U$ by
$f_{hyp}\left(  z\right)  =z+\frac{1}{n-1}\bar{z}^{n-1}$ with analytic and
co-analytic parts $z$ and $\frac{1}{n-1}z^{n-1}$. The dilatation is
$\omega_{f}\left(  z\right)  =z^{n-2}$ and $J_{f}\left(  z\right)
=1^{2}-\left\vert \bar{z}^{n-2}\right\vert ^{2}=1-\left\vert z^{2n-4}%
\right\vert .$ Clearly $J_{f}\left(  z\right)  >0$ in $U$ so $f$ is locally
one to one$.$ It is also univalent on $U$ (see for instance \cite{PeterHMBook}
or \cite{ECA}). Upon extension to $\bar{U},$ we can consider the boundary
curve $f\left(  e^{it}\right)  ,$ which for $n=4\ $is the familiar astroid
curve from calculus. We consider the boundary extension $f_{hyp}\left(
e^{it}\right)  =e^{it}+\frac{1}{n-1}e^{-i\left(  n-1\right)  t}$ which has
singular points (where the derivative is $0$ ) precisely when $t=2k\pi/n$ ,
$k=1,2,...,n$. The left of Figure \ref{rosette0} shows the image of $\bar{U}$
under $f_{hyp} $ for $n=6$, where $f_{hyp}$ maps the unit circle $\partial U$
onto a $6$-cusped hypocycloid.
\end{example}

%

\begin{figure}[h]%
\centering
\includegraphics[
height=2.3514in,
width=5.028in
]%
{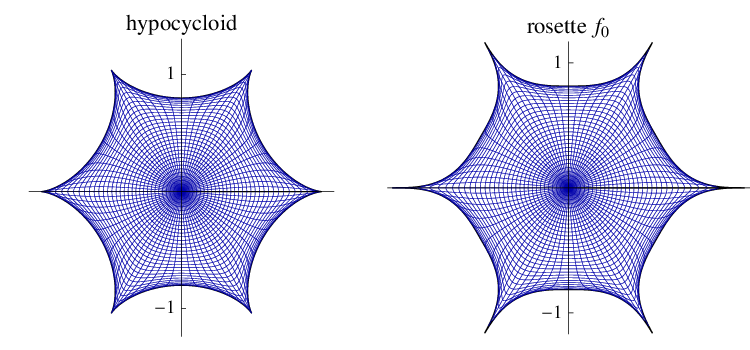}%
\caption{Images of a regular polar grid in $U$ under the 6-cusped hypocycloid
(left) and a 6-cusped rosette mapping (right) defined by $f_{0}\left(
z\right)  =z\,_{2}F_{1}\left(  \frac{1}{2},\frac{1}{12};\frac{13}{12}%
;z^{12}\right)  +\frac{1}{5}\bar{z}^{5}\,\overline{_{2}F_{1}\left(  \frac
{1}{2},\frac{5}{12};\frac{17}{12};z^{12}\right)  }$}%
\label{rosette0}%
\end{figure}

The rosette harmonic mappings introduced here can be viewed as modifications
of the hypocycloid mappings, and are formulated by incorporating Gauss
hypergeometric $_{2}F_{1}\,\ $factors into the analytic and co-analytic parts.
The rosette mappings $f_{\beta}$ will be defined in Section 3, but an example
of a 6-cusped rosette mapping appears on the right of Figure \ref{rosette0}.
In comparison with the 6-cusped hypocycloid, the rosette has cusps that are
more ``pointy". Figure \ref{five} indicates further examples of rosette
mappings for $n=6$ in which the images of the unit disk may have rotational
but not reflectional symmetry.

The process by which we obtain rosette harmonic mappings with essentially
different features, is by rotating the analytic and co-analytic parts relative
to one another. In one interesting configuration, the analytic and co-analytic
derivatives in the boundary extension alternate between ``alignment" and
``cancellation" on sub-arcs of $\partial U,$ leading to arcs of constancy on
the boundary of the unit disk. Other harmonic mappings with arcs of constancy
on the disk boundary are the Poisson extensions of piecewise constant
functions defined on the unit circle; these mappings have been studied in
\cite{Sheil-Small}, \cite{JaneBethPeter}, \cite{Mapping4}, and
\cite{MappingFull}, for instance.

In the forthcoming article \cite{AbdullahMcDougall}, we explore the rosette
minimal surfaces that ``lift" from the rosette harmonic mappings. The
hypocyloid mappings of Example \ref{hypo} can lift to an Enneper surface.
However the minimal graphs lifting from rosette harmonic mappings with the
arcs of constancy described above, have interesting similarities and contrasts
when compared with the Jenkins-Serrin surfaces. Jenkins-Serrin minimal
surfaces arise as ``lifts" of Poisson extensions of piecewise constant
boundary functions - the same mappings referenced in the previous paragraph.
The prototype for a Jenkins-Serrin surface is Schwarz's first surface (see
\cite{Schwarz}) which lifts from a harmonic mapping onto a square; more
general Jenkins-Serrin surfaces are described in \cite{JS}, and assume values
of either $+\infty$ or $-\infty$ over the boundary segments connecting the
vertices of the underlying harmonic mappings. Further Jenkins-Serrin surfaces
have been explored in \cite{DurenThygerson}, and \cite{MSStars}, for example.
The rosette minimal graphs turn out to have in common with Jenkins-Serrin
surfaces that the height function has a constant magnitude over the
curves/straightedges connecting the vertices of the underlying harmonic
mapping, and the completion of the minimal graphs contain vertical lines
(these lines occur where the height function changes between a positive and
negative value). These rosette minimal surfaces differ from the Jenkins-Serrin
surfaces in that the constant values of the height function are finite.

It would be interesting to know further examples of harmonic mappings on the
unit disk for which the boundary extensions have arcs of constancy mapping
onto vertices, for which the boundary correspondence is continuous, and for
which there exists a minimal surface lift with a piecewise constant height
function over the boundary of the harmonic mapping.

The sections following this introduction are organized as follows: In Section
2, the properties of the $_{2}F_{1}$ hypergeometric functions utilized in the
definition of the rosette harmonic mappings are described. In Section 3, the
rosette harmonic mappings are defined and their rotational and reflectional
symmetries are explored. In Section 4 we describe the boundary, that is
piecewise smooth between the nodes and cusps that together form the image of
the unit circle. In particular, we highlight the remarkable situation in which
the rosette harmonic mapping is constant on alternating arcs that partition
the unit circle. In Section 5 we use the argument principle for harmonic
functions to show that the rosette harmonic mappings are in fact univalent. We
also describe a computationally efficient way to construct the image of the
unit disk by using rotations of a smaller ``fundamental set".

\section{Hypergeometric functions\label{hyper}}

We begin by defining two Gauss hypergeometric $_{2}F_{1}$ functions.

\begin{definition}
\label{hyperDef}Let $U$ denote the unit disk. For $n\geq2$ and $z\in\bar{U},$
consider the Gauss $_{2}F_{1}$ hypergeometric functions
\begin{align*}
H_{n}\left(  z\right)   &  =\,_{2}F_{1}\left(  \frac{1}{2},\frac{1}%
{2n},1+\frac{1}{2n},z\right) \\
G_{n}\left(  z\right)   &  =\,_{2}F_{1}\left(  \frac{1}{2},\frac{1}{2}%
-\frac{1}{2n},\frac{3}{2}-\frac{1}{2n},z\right)
\end{align*}
for $n\geq2.$ Note when $n=2$, that $G_{2}=H_{2}$.
\end{definition}

By the Corollary in \cite{MerkesScott}, $H_{n}$ and $G_{n}$ map the unit disk
onto a convex region. Since $H_{n}\left(  0\right)  =G_{n}\left(  0\right)
=1,$ the convex regions $H_{n}\left(  U\right)  $ and $G_{n}\left(  U\right)
$ contain $1$. Thus $H_{n}$ and $G_{n}$ can be considered to be perturbations
of the constant function of unit value$.$ In Figure \ref{pert} where $n=6$,
one can see the distortion from unity in $H_{6}\left(  z\right)  $ and
$G_{6}\left(  z\right)  .$ The reflectional symmetry in the real axis is also
apparent. These and further properties of the hypergeometric functions $H_{n}
$ and $G_{n}$ are stated in Proposition \ref{hyperProps}.%

\begin{figure}[h]%
\centering
\includegraphics[
height=1.3699in,
width=3.9297in
]%
{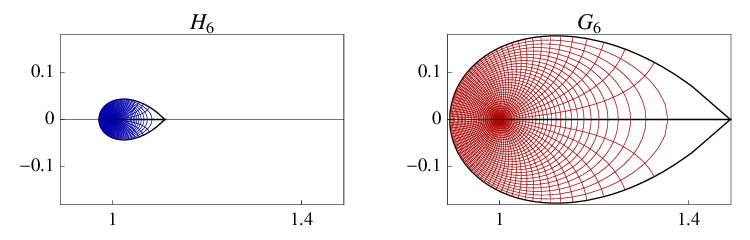}%
\caption{Images of a polar grid in $U$ under $H_{6}$ and $G_{6}.$}%
\label{pert}%
\end{figure}

\begin{proposition}
\label{hyperProps} Let $n\geq3.$ \newline(i) The Taylor coefficients $c_{m}$
of $H_{n}\left(  z\right)  \,$and $d_{m}$ of $G_{n}\left(  z\right)  $ are
given by the formulae
\[
c_{m}=A_{m}\frac{1}{2mn+1}\text{ and }d_{m}=A_{m}\frac{n-1}{n\left(
2m+1\right)  -1},\text{ }m\geq0,
\]
where $A_{0}=1,$ and $A_{m}=\left(
\begin{array}
[c]{c}%
2m-1\\
m-1
\end{array}
\right)  /2^{2m-1},$ for $m\geq1.$\newline(ii) The hypergeometric functions
$H_{n}$ and $G_{n}$ have reflectional symmetry
\[
H_{n}\left(  \bar{z}\right)  =\overline{H_{n}\left(  z\right)  }\text{ and
}G_{n}\left(  \bar{z}\right)  =\overline{G_{n}\left(  z\right)  }.
\]
(iii) Both $H_{n}\ $and $G_{n}$ are univalent mappings of the open unit disk
onto a bounded convex region in the right half plane. Moreover, both series
converge absolutely on the closed unit disk.\newline(iv) Define $K_{n}%
=H_{n}\left(  1\right)  $. Both $H_{n}\ $and $G_{n}$ are positive for
$x\in\left[  -1,1\right]  ,$ with the interval $\left[  H_{n}\left(
-1\right)  ,H_{n}\left(  1\right)  \right]  $ strictly contained in $\left[
G_{n}\left(  -1\right)  ,G_{n}\left(  1\right)  \right]  ,$ and the values at
$1$ are as follows:
\begin{align}
H_{n}\left(  1\right)   &  =K_{n}=\sqrt{\pi}\frac{\Gamma\left(  1+\frac{1}%
{2n}\right)  }{\Gamma\left(  \frac{1}{2}+\frac{1}{2n}\right)  },\label{Kn}\\
G_{n}\left(  1\right)   &  =\left(  n-1\right)  \tan\left(  \frac{\pi}%
{2n}\right)  K_{n}=\sqrt{\pi}\frac{\Gamma\left(  \frac{3}{2}-\frac{1}%
{2n}\right)  }{\Gamma\left(  1-\frac{1}{2n}\right)  }.\nonumber
\end{align}
Moreover,
\[
5/6<G_{n}\left(  -1\right)  <H_{n}\left(  -1\right)  <1<H_{n}\left(  1\right)
<G_{n}\left(  1\right)  <2.
\]

\end{proposition}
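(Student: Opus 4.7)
\noindent\textit{Proof plan.} For (i), I would unwind the Pochhammer definition $_2F_1(a,b;c;z) = \sum_{m\ge 0}\frac{(a)_m(b)_m}{(c)_m m!}z^m$. In both $H_n$ and $G_n$ the denominator parameter $c$ exceeds one numerator parameter by exactly $1$, so the telescoping $(a)_m/(a+1)_m = a/(a+m)$ (with $a=1/(2n)$ for $H_n$ and $a = 1/2-1/(2n)$ for $G_n$) produces the factors $1/(2mn+1)$ and $(n-1)/[n(2m+1)-1]$, respectively. The remaining factor $(1/2)_m/m! = \binom{2m}{m}/4^m$ becomes $A_m$ via $\binom{2m}{m} = 2\binom{2m-1}{m-1}$. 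Part (ii) is then immediate, since $c_m,d_m \in \mathbb{R}$ forces $H_n(\bar z) = \sum c_m \bar z^m = \overline{H_n(z)}$ and likewise for $G_n$.

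For (iii), convex univalence on $U$ is exactly the Merkes--Scott corollary already cited. To locate the image in the right half plane I plan to invoke Euler's integral representation: because $c-b=1$ in both cases, it collapses (after substitutions $t=u^{2n}$ and $t=s^2$) to
\[
H_n(z) = \int_0^1 \frac{du}{\sqrt{1-u^{2n}z}}, \qquad G_n(z) = \frac{n-1}{n}\int_0^1 \frac{s^{-1/n}\,ds}{\sqrt{1-s^{2}z}}.
\]
For $z\in U$ both $1-u^{2n}z$ and $1-s^2 z$ lie in the open disk of radius $1$ centered at $1$, hence in $\{\operatorname{Re}>0\}$; their principal $-1/2$ powers then sit in $|\arg|<\pi/4$, and integrating against a positive density keeps the integral strictly in the right half plane. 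Absolute convergence on $\bar U$ follows from $A_m = \binom{2m}{m}/4^m \sim 1/\sqrt{\pi m}$ (Stirling), whence $c_m,d_m = O(m^{-3/2})$; boundedness of the images is then a consequence.

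For (iv), the values at $z=1$ come from Gauss's summation $_2F_1(a,b;c;1) = \Gamma(c)\Gamma(c-a-b)/[\Gamma(c-a)\Gamma(c-b)]$, applicable since $c-a-b=1/2>0$ in both cases. Dividing the resulting $\Gamma$-quotient for $G_n(1)$ by that for $K_n$, then applying $\Gamma(1+x)=x\Gamma(x)$ together with the reflection formulas $\Gamma(1/2-x)\Gamma(1/2+x)=\pi/\cos\pi x$ and $\Gamma(x)\Gamma(1-x)=\pi/\sin\pi x$ at $x=1/(2n)$, produces $G_n(1)=(n-1)\tan(\pi/(2n))K_n$. Positivity on $[-1,1]$ is visible from the integral representations, and strict monotonicity of $H_n, G_n$ on $(-1,1)$ follows from univalence plus reality on $\mathbb{R}$ (from (ii)) plus positive derivative at $0$; this yields $H_n(-1)<1<H_n(1)$ and analogously for $G_n$. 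A direct algebraic computation gives
\[
d_m-c_m = A_m\,\frac{2mn(n-2)}{(n(2m+1)-1)(2mn+1)} > 0 \qquad (m\geq 1,\ n\geq 3),
\]
so $G_n(1)>H_n(1)$ term-by-term, and $G_n(1)<2$ follows from $(n-1)\tan(\pi/(2n))<\pi/2$ combined with an elementary bound on $K_n$. At $z=-1$, I plan to verify that both $\{d_m\}$ and $\{d_m-c_m\}$ are decreasing in $m$ (using that $A_m$ decreases and the rational factors are monotone), so the alternating-series estimate yields $G_n(-1)\geq 1-d_1 = 1-(n-1)/[2(3n-1)]>5/6$ and $H_n(-1)-G_n(-1)\geq (d_1-c_1)-(d_2-c_2)>0$.

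The principal obstacle is the strict inequality $G_n(-1)<H_n(-1)$: the natural coefficient comparison $d_m>c_m$ has the wrong sign once alternation kicks in at $z=-1$, so the argument must instead track monotonicity of the full difference sequence $\{d_m-c_m\}$ uniformly in $n\geq 3$. The sharpness of the numerical bound $5/6$ (attained in the limit $n\to\infty$) further suggests that the single-term alternating bound via $1-d_1$ is essentially the only bound that works.
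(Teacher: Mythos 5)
Your plan is correct, and for parts (i), (ii), the Gauss-summation evaluation at $z=1$, and the term-by-term comparison $d_m>c_m$ it coincides with what the paper does. You diverge in two places, both legitimately. First, for the half-plane localization in (iii) you use the Euler integral (which does collapse as you say, since $c-b=1$ in both cases, and the kernel $(1-u^{2n}z)^{-1/2}$ indeed has positive real part for $z\in U$); the paper instead gets this for free from positivity of the Taylor coefficients, which yields the sharper localization $G_n(-1)<\operatorname{Re}G_n(z)<G_n(1)$, i.e.\ the strip $5/6<\operatorname{Re}w<2$, reusing the endpoint estimates of (iv). Your route is self-contained but proves only $\operatorname{Re}>0$; either suffices for the statement. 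Second, for the delicate inequality $G_n(-1)<H_n(-1)$ the paper compares the one-term lower bound $c_0-c_1$ for $H_n(-1)$ against the two-term upper bound $d_0-d_1+d_2$ for $G_n(-1)$, reducing to the explicit inequality $(n-3)(22n^2-7n+1)\ge0$ (tight at $n=3$). Your alternative --- write $H_n(-1)-G_n(-1)=\sum_{m\ge1}(-1)^{m+1}e_m$ with $e_m=d_m-c_m=A_m\,2mn(n-2)/[(2mn+n-1)(2mn+1)]$ and apply the alternating-series bound $\ge e_1-e_2>0$ --- does work: $A_m$ is strictly decreasing, and writing the rational factor as $u/[(u+n-1)(u+1)]$ with $u=2mn$, its $u$-derivative has the sign of $n-1-u^2<0$ for $u\ge2n$, so $e_m$ is strictly decreasing for $m\ge1$. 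This trades the paper's cubic algebra for a monotonicity check and is arguably cleaner. Two small remarks: your appeal to univalence for monotonicity of $H_n,G_n$ on $[-1,1]$ is more than is needed (positive coefficients give it directly), and the claim that $5/6$ is ``attained in the limit'' applies to the bound $1-d_1\to5/6$, not to $G_n(-1)$ itself, which tends to $\ln(1+\sqrt2)>5/6$; neither affects correctness.
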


\begin{proof}
We first note the form of the Taylor coefficients $c_{m}$ of $H_{n},$ and
$d_{m}$ of $G_{n},$ where $n\geq2,$ and $m\geq0.$ By the definition of
hypergeometric series, and using the Pochhammer rising factorial symbol, we
obtain
\[
c_{m}=\frac{\left(  \frac{1}{2}\right)  _{m}\left(  \frac{1}{2n}\right)  _{m}%
}{m!\left(  1+\frac{1}{2n}\right)  _{m}}=A_{m}\frac{\left(  \frac{1}%
{2n}\right)  \left(  \frac{1}{2n}+1\right)  \left(  \frac{1}{2n}+2\right)
\ldots\left(  \frac{1}{2n}+m-1\right)  }{\left(  \frac{1}{2n}+1\right)
\left(  \frac{1}{2n}+2\right)  \ldots\left(  \frac{1}{2n}+m-1\right)  \left(
\frac{1}{2n}+m\right)  }%
\]
where we define $A_{m}=\left(  \frac{1}{2}\right)  _{m}/m!$ . All but two
factors cancel, leaving
\[
c_{m}=A_{m}\frac{\left(  \frac{1}{2n}\right)  }{\left(  \frac{1}{2n}+m\right)
}=A_{m}\frac{1}{2mn+1}.
\]
A more familiar formula for $A_{m}$ in terms of binomial coefficients is
obtained from
\[
A_{m}=\frac{1\cdot3\cdot5\ldots\left(  2m-1\right)  }{2^{m}m!}=\frac{\left(
2m-1\right)  !}{\left(  m-1\right)  !2^{m-1}2^{m}m!}=\left(
\begin{array}
[c]{c}%
2m-1\\
m-1
\end{array}
\right)  /2^{2m-1}.
\]
The mth Taylor coefficient of $G_{n}$ is
\[
d_{m}=\frac{\left(  \frac{1}{2}\right)  _{m}\left(  \frac{1}{2}-\frac{1}%
{2n}\right)  _{m}}{m!\left(  \frac{3}{2}-\frac{1}{2n}\right)  _{m}}=A_{m}%
\frac{\left(  \frac{1}{2}-\frac{1}{2n}\right)  _{m}}{\left(  \frac{3}{2}%
-\frac{1}{2n}\right)  _{m}}.
\]
Again, upon expanding the Pochhammer symbols and simplifying we obtain
\[
d_{m}=A_{m}\frac{\left(  \frac{1}{2}-\frac{1}{2n}\right)  }{\left(
\frac{2m+1}{2}-\frac{1}{2n}\right)  }=A_{m}\frac{n-1}{n\left(  2m+1\right)
-1},
\]
establishing (i). Clearly these are positive term series$,$ so the stated
symmetries involving conjugation in (ii) hold. We now prove (iv). Because the
coefficients $c_{m}$ and $d_{m}$ are positive for all $m=0,1,2,...,$ the real
line maps to the real line under both $H_{n}$ and $G_{n}.$ Moreover, for $x>0$
the function values are positive, and increasing with $x$. Clearly
$H_{n}\left(  1\right)  >1>H_{n}\left(  -1\right)  ,$ because the $c_{m}$ are
positive with $c_{0}=1,$ and for $x<0,$ we obtain an alternating series. For
$G_{n},$ we see that $d_{0}-d_{1}<G_{n}\left(  -1\right)  <d_{0}-d_{1}+d_{2}.$
Computing the lower bound, we obtain $d_{0}=1,$ and $d_{1}=A_{1}\frac{1}%
{2n+1}=\frac{1}{4n+2},$ so
\[
G_{n}\left(  -1\right)  >d_{0}-d_{1}=1-\frac{1}{2}\frac{n-1}{3n-1}>1-\frac
{1}{6}=5/6.
\]
To see that $G_{n}\left(  -1\right)  <H_{n}\left(  -1\right)  ,$ we again use
the alternating series test and show that the lower bound $c_{0}-c_{1}$ for
$H_{n}\left(  -1\right)  $ is larger than the upper bound $d_{0}-d_{1}+d_{2}$
for $G_{n}\left(  -1\right)  .$ We show that the equivalent inequality
$c_{1}+d_{2}\leq d_{1}$ holds$.$ Note that $A_{1}=\frac{1}{2}$ and
$A_{2}=\frac{3}{8},$ so the inequality becomes
\[
\frac{1}{2}\frac{1}{2n+1}+\frac{3}{8}\frac{n-1}{5n-1}\leq\frac{1}{2}\frac
{n-1}{3n-1}%
\]
which after some algebra is equivalent to $\left(  n-3\right)  \left(
22n^{2}-7n+1\right)  \geq0.$ The latter is true for $n\geq3,$ with equality at
$n=3.$ Thus the inequalities $G_{n}\left(  -1\right)  <H_{n}\left(  -1\right)
<1$ hold for $n\geq3$. Finally, coefficient by coefficient, the following
equivalent inequalities are equivalent to $c_{m}<d_{m}$ for $m=1,2,3,...$
\begin{align*}
A_{m}\frac{1}{2mn+1}  &  <A_{m}\frac{n-1}{n\left(  2m+1\right)  -1}\\
0  &  <2mn\left(  n-2\right)  ,
\end{align*}
and this last inequality is clearly true for all $m=1,2,...$ when $n>2$. Thus
$H_{n}\left(  1\right)  <G_{n}\left(  1\right)  .$ Finally we use Theorem 18
in \S 32$\ $of \cite{Rainville} to compute
\[
K_{n}=H_{n}\left(  1\right)  =\sqrt{\pi}\frac{\Gamma\left(  1+\frac{1}%
{2n}\right)  }{\Gamma\left(  \frac{1}{2}+\frac{1}{2n}\right)  }\text{ and
}G_{n}\left(  1\right)  =\sqrt{\pi}\frac{\Gamma\left(  \frac{3}{2}-\frac
{1}{2n}\right)  }{\Gamma\left(  1-\frac{1}{2n}\right)  }%
\]
Note that
\[
G_{n}\left(  1\right)  /H_{n}\left(  1\right)  =\frac{\Gamma\left(  \frac
{3}{2}-\frac{1}{2n}\right)  }{\Gamma\left(  1-\frac{1}{2n}\right)  }%
\frac{\Gamma\left(  \frac{1}{2}+\frac{1}{2n}\right)  }{\Gamma\left(
1+\frac{1}{2n}\right)  }=\frac{\Gamma\left(  \frac{1}{2}+\frac{1}{2n}\right)
\Gamma\left(  \frac{3}{2}-\frac{1}{2n}\right)  }{\Gamma\left(  1+\frac{1}%
{2n}\right)  \Gamma\left(  1-\frac{1}{2n}\right)  }.
\]
From the well known identities for the gamma function, $\Gamma\left(
z+1\right)  =z\Gamma\left(  z\right)  ,$ and $\Gamma\left(  z\right)
\Gamma\left(  1-z\right)  =\pi/\sin\left(  \pi z\right)  ,$ we obtain
\[
\Gamma\left(  1+z\right)  \Gamma\left(  1-z\right)  =\left(  \pi z\right)
/\sin\left(  \pi z\right)  .
\]
Therefore
\[
G_{n}\left(  1\right)  /H_{n}\left(  1\right)  =\frac{\pi\left(  \frac{1}%
{2}-\frac{1}{2n}\right)  }{\sin\left(  \pi\left(  \frac{1}{2}-\frac{1}%
{2n}\right)  \right)  }\frac{\sin\left(  \pi\frac{1}{2n}\right)  }{\pi\frac
{1}{2n}}=\left(  n-1\right)  \frac{\sin\left(  \pi/2n\right)  }{\cos\left(
\pi/2n\right)  }=\left(  n-1\right)  \tan\left(  \pi/2n\right)  .
\]

We finish by proving (iii). The radius of convergence of any Gauss
hypergeometric series is 1. However the convergence on the closed disk is
guaranteed for $_{2}F_{1}\left(  a,b,c,z\right)  $ whenever $\operatorname{Re}%
\left(  c-a-b\right)  >0$ (Section 29 of \cite{Rainville}); here
$\operatorname{Re}\left(  c-a-b\right)  =1/2\,\,$for both $G_{n}$ and $H_{n}$.
Because both $G_{n}$ and $H_{n}$ satisfy (ii) of the Corollary in
\cite{MerkesScott}, $_{2}F_{1}\left(  a,b,c,z\right)  $ maps the open disk
univalently onto a convex region. Because of positive Taylor coefficients, we
have $G_{n}\left(  -1\right)  <\operatorname{Re}G_{n}\left(  z\right)
<G_{n}\left(  1\right)  $ and $H_{n}\left(  -1\right)  <\operatorname{Re}%
H_{n}\left(  z\right)  <H_{n}\left(  1\right)  .$ Thus the images
$H_{n}\left(  U\right)  $ and $G_{n}\left(  U\right)  $ are convex sets within
the lines $\operatorname{Re}z=5/6$ and $\operatorname{Re}z=2.$
\end{proof}

Examples of the hypergeometric functions $H_{n}$ and $G_{n}$ are calculated in
\cite{AbdullahMcDougall}, through applying the shear construction of
\cite{ClunieSheilSmall} to a particular conformal mapping onto a regular
12-gon. This shear was in fact the origin of the first rosette mapping. The
functions $H_{n}$ and $G_{n}$ appear in antiderivatives associated with
several related\ radical expressions in \cite{AbdullahMcDougall}, and are
restated here.

\begin{proposition}
\label{hypergeometric} Let $n\geq2,\,$and let $z\in U.$ Then
\begin{align}
\int_{0}^{z}\frac{1}{\sqrt{1-\zeta^{2n}}}d\zeta &  =zH_{n}\left(
z^{2n}\right) \label{hypint1}\\
\int_{0}^{z}\frac{\zeta^{n-2}}{\sqrt{1-\zeta^{2n}}}d\zeta &  =\frac{z^{n-1}%
}{n-1}G_{n}\left(  z^{2n}\right)  . \label{hypint2}%
\end{align}

\end{proposition}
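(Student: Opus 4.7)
The plan is to prove both identities by expanding the integrand as a binomial series, integrating termwise, and matching the resulting power series against the coefficient formulas from part (i) of Proposition \ref{hyperProps}.

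For any $\zeta \in U$ we have $|\zeta^{2n}| < 1$, so the generalized binomial series gives
\[
\frac{1}{\sqrt{1-\zeta^{2n}}} = \sum_{m=0}^{\infty} \binom{-1/2}{m}(-1)^m \zeta^{2mn} = \sum_{m=0}^{\infty} \frac{(1/2)_m}{m!}\,\zeta^{2mn} = \sum_{m=0}^{\infty} A_m\, \zeta^{2mn},
\]
where the coefficient $A_m = (1/2)_m/m!$ agrees with the one introduced in the previous proposition. The series converges uniformly on compact subsets of $U$, so termwise integration from $0$ to $z$ is legitimate.

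For the first identity, termwise integration yields
\[
\int_0^z \frac{d\zeta}{\sqrt{1-\zeta^{2n}}} = \sum_{m=0}^{\infty} A_m \,\frac{z^{2mn+1}}{2mn+1} = z \sum_{m=0}^{\infty} \Bigl(A_m\,\tfrac{1}{2mn+1}\Bigr) z^{2mn} = z \sum_{m=0}^{\infty} c_m\, (z^{2n})^m = z\,H_n(z^{2n}),
\]
where in the last step I invoke Proposition \ref{hyperProps}(i) to recognize the coefficient as $c_m$.

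For the second identity, multiplying the binomial series by $\zeta^{n-2}$ and integrating termwise gives
\[
\int_0^z \frac{\zeta^{n-2}}{\sqrt{1-\zeta^{2n}}}\,d\zeta = \sum_{m=0}^{\infty} A_m\, \frac{z^{2mn+n-1}}{2mn+n-1} = \frac{z^{n-1}}{n-1}\sum_{m=0}^{\infty}\Bigl(A_m\,\frac{n-1}{n(2m+1)-1}\Bigr) z^{2mn},
\]
and the inner coefficient is exactly $d_m$, so the sum equals $\frac{z^{n-1}}{n-1}\,G_n(z^{2n})$.

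There is essentially no obstacle beyond the bookkeeping of exponents; the identities are arithmetic consequences of part (i) of Proposition \ref{hyperProps}, which already did the Pochhammer simplifications. The only point worth care is confirming that $2mn+n-1 \neq 0$ for $n \geq 2$ (true since $n(2m+1) \geq 2 > 1$) so that the term $m=0$ contributes $z^{n-1}/(n-1)$ without singularity, matching the factor pulled out front.
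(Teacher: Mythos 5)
Your proof is correct and is essentially the same computation the paper relies on: the paper's own ``proof'' simply defers to the Gauss hypergeometric integration formula as carried out in the cited reference, and your binomial expansion of $(1-\zeta^{2n})^{-1/2}$ followed by termwise integration and matching against the coefficients $c_m = A_m/(2mn+1)$ and $d_m = A_m(n-1)/(n(2m+1)-1)$ is exactly that standard verification, made self-contained. The only cosmetic point is that Proposition \ref{hyperProps} is stated for $n\geq 3$ while this proposition allows $n\geq 2$; since the coefficient derivation in its proof is explicitly carried out for $n\geq 2$, your appeal to part (i) is still justified.
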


\begin{proof}
The integrations are carried out using the Gauss hypergeometric formula, as
indicated in \cite{AbdullahMcDougall}. The integrals are restated here in
terms of the notation introduced in Definition \ref{hyperDef}.
\end{proof}

\section{Rosette Harmonic Mappings}

The analytic antiderivatives of Proposition \ref{hypergeometric} are defined
below as $h_{n}\left(  z\right)  $ and $g_{n}\left(  z\right)  .$ These become
(rotations aside) the analytic and co-analytic parts of the rosette harmonic
mappings in Definition \ref{defnharmonic}. We note the similarity with the
analytic and co-analytic parts of the hypocycloid, modified here with
hypergeometric factors.

\begin{definition}
\label{defnhg}For $n\geq2$ and $n\in%
\mathbb{N}
$, define the analytic functions
\[
h_{n}\left(  z\right)  =z\,H_{n}\left(  z^{2n}\right)  ,\text{ \ }g_{n}\left(
z\right)  =\frac{z^{n-1}}{n-1}G_{n}\left(  z^{2n}\right)  ,\text{ }z\in\bar{U}%
\]
where $H_{n}$ and $G_{n}$ are the hypergeometric $_{2}F_{1}$ functions of
Definition \ref{hyperDef}.
\end{definition}

\begin{remark}
In the current context, we consider only $n\geq3$ for rosette harmonic
mappings, but we note that when $n=2,$ $h_{n}\left(  z\right)  =g_{n}\left(
z\right)  $ becomes the standard Schwarz-Christoffel mapping of the unit disk
onto a square.
\end{remark}

The function $h_{n}$ can easily be shown to be starlike and therefore
univalent on the unit disk. However $g_{n}$ is not univalent for $n\geq3$. The
image $h_{6}\left(  \bar{U}\right)  $ is shown on the left of Figure
\ref{canon6}, and a portion of the graph of $\overline{g_{6}}\left(  \bar
{U}\right)  $ is shown on the right, with $z$ restricted to $\left\{  z\in
\bar{U}:\arg z\in\left(  0,\left(  5/2\right)  \pi/n\right)  \right\}  $.%

\begin{figure}[h]%
\centering
\includegraphics[
height=2.3817in,
width=4.5074in
]%
{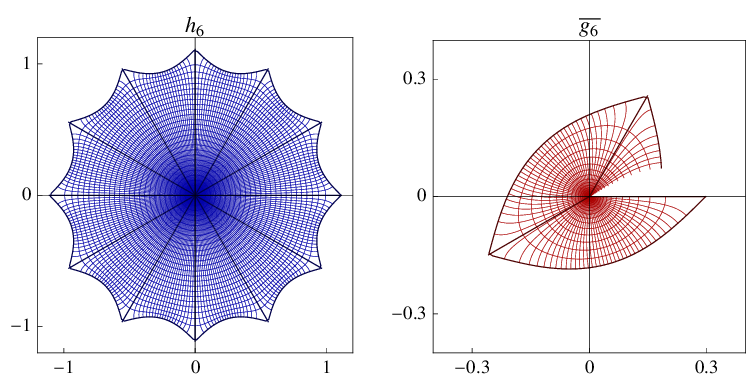}%
\caption{Image of $\bar{U}$ under $h_{6}$ (left)$,$ and a first-quadrant
sector of $\bar{U}$ under $\overline{g_{6}}$ (right)$.$ Note that the image of
$\overline{g_{6}}$ is relatively enlarged$.$ }%
\label{canon6}%
\end{figure}

\begin{proposition}
\label{analco}Let $n\geq3$ and let $\zeta$ be a primitive $2nth$ root of
unity$.$ Then $h_{n}\left(  z\right)  $ and $g_{n}\left(  z\right)  $ are
continuous on $\bar{U}$ and analytic on $\overline{U}\backslash\left\{
\zeta^{j}:j=1,2,...,2n\right\}  .$ Additionally the following hold.
\newline(i) Reflectional symmetries $h_{n}\left(  \bar{z}\right)
=\overline{h_{n}\left(  z\right)  }$ and $g_{n}\left(  \bar{z}\right)
=\overline{g_{n}\left(  z\right)  }$ are present.\newline(ii) The functions
$h_{n}\left(  e^{it}\right)  $ and $g_{n}\left(  e^{it}\right)  $ have
magnitudes $\left\vert H_{n}\left(  e^{i2nt}\right)  \right\vert $ and
\newline$\frac{1}{n-1}\left\vert G_{n}\left(  e^{i2nt}\right)  \right\vert ,$
and these magnitudes are each $\left(  \pi/n\right)  $-periodic as functions
of $t.$ Moreover, convergence at $H\left(  1\right)  $ and $G\left(  1\right)
$ is not uniform$.$\newline(iii) Both $h_{n}$ and $g_{n}$ exhibit $2n$-fold
rotational symmetry: for \ $j\in%
\mathbb{Z}
,$
\begin{equation}
h_{n}\left(  e^{ij\pi/n}z\right)  =e^{ij\pi/n}h_{n}\left(  z\right)  \text{
and }g_{n}\left(  e^{ij\pi/n}z\right)  =\left(  -1\right)  ^{j}e^{-ij\pi
/n}g_{n}\left(  z\right)  . \label{ga}%
\end{equation}
(iv) The derivatives of $h_{n}\left(  z\right)  $ and $g_{n}\left(  z\right)
$ are given by
\begin{equation}
h_{n}^{\prime}\left(  z\right)  =\frac{1}{\sqrt{1-z^{2n}}}\text{ and }%
g_{n}^{\prime}\left(  z\right)  =\frac{z^{n-2}}{\sqrt{1-z^{2n}}}.
\label{derivatives}%
\end{equation}

\end{proposition}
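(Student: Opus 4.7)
The plan is to prove (iv) first, since the derivative formulas underpin both the analytic extension of $h_n$ and $g_n$ beyond the open disk and the singularity structure at the $2n$th roots of unity. Once (iv) is in hand, continuity on $\bar U$ and analyticity on $\bar U\setminus\{\zeta^{j}\}$, together with the three symmetry/magnitude assertions (i)--(iii), reduce to direct manipulations invoking Proposition \ref{hyperProps} (reflective symmetry, absolute convergence on $\bar U$) and Proposition \ref{hypergeometric} (the integral representation of $h_n$ and $g_n$).

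For (iv), the identities of Proposition \ref{hypergeometric} read $h_{n}(z)=\int_{0}^{z}(1-\zeta^{2n})^{-1/2}\,d\zeta$ and $g_{n}(z)=\int_{0}^{z}\zeta^{n-2}(1-\zeta^{2n})^{-1/2}\,d\zeta$, so the fundamental theorem of calculus immediately yields $h_{n}'(z)=(1-z^{2n})^{-1/2}$ and $g_{n}'(z)=z^{n-2}(1-z^{2n})^{-1/2}$. For (i), Proposition \ref{hyperProps}(ii) supplies $H_{n}(\bar w)=\overline{H_{n}(w)}$, whence $h_{n}(\bar z)=\bar z\,H_{n}(\overline{z^{2n}})=\bar z\,\overline{H_{n}(z^{2n})}=\overline{h_{n}(z)}$, and identically for $g_n$. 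For (iii), set $\omega=e^{ij\pi/n}$; since $\omega^{2n}=e^{2ij\pi}=1$,
\[
h_n(\omega z)=\omega z\,H_n(z^{2n})=\omega\, h_n(z),\qquad g_n(\omega z)=\omega^{n-1}g_n(z),
\]
and the computation $\omega^{n-1}=e^{ij\pi}\,e^{-ij\pi/n}=(-1)^{j}e^{-ij\pi/n}$ completes (iii). For the magnitudes in (ii), at $z=e^{it}$ the factors $|z|=|z^{n-1}|=1$ leave $|h_n(e^{it})|=|H_n(e^{i2nt})|$ and $|g_n(e^{it})|=\frac{1}{n-1}|G_n(e^{i2nt})|$, which are $(\pi/n)$-periodic in $t$ because $t\mapsto t+\pi/n$ advances the argument $2nt$ by $2\pi$.

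Continuity on $\bar U$ follows from the absolute convergence of $H_n$ and $G_n$ on $\bar U$ (Proposition \ref{hyperProps}(iii)) combined with $|z^{2n}|\le 1$ there. For analyticity on $\bar U\setminus\{\zeta^{j}\}$ I would exploit (iv): the right-hand sides in (\ref{derivatives}) are single-valued and analytic on a neighborhood of every point of $\bar U\setminus\{\zeta^{j}\}$ via the branch of $\sqrt{1-z^{2n}}$ that is positive on $(-1,1)$, so a primitive argument along paths avoiding the $\zeta^{j}$ yields analytic extensions of $h_n$ and $g_n$ across the open boundary arcs. The main subtle point is the last clause of (ii): since the Weierstrass $M$-test already delivers uniform convergence of $\sum c_m z^m$ on all of $\bar U$, I read the claim as asserting non-uniform convergence of the differentiated series at $z=1$. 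This I would justify by the asymptotic $A_m\sim 1/\sqrt{\pi m}$, which yields $mc_m\sim(2n\sqrt\pi)^{-1}m^{-1/2}$ and hence $\sum m c_m=\infty$, so the series for $h_n'$ diverges at $z=1$; the same estimate applies to $G_n$, consistently with the cusp behavior predicted by $h_n'(1)=\infty$. I expect this interpretive step to be the only genuine obstacle; the rest is mechanical given the preceding propositions.
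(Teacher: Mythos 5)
Your proposal follows essentially the same route as the paper: (iv) is read off from the integral representations of Proposition \ref{hypergeometric}, (i)--(iii) are the same direct computations using $(e^{ij\pi/n}z)^{2n}=z^{2n}$, $(e^{ij\pi/n})^{n-1}=(-1)^{j}e^{-ij\pi/n}$, and the reflection property of Proposition \ref{hyperProps}(ii), and analyticity off the $2n$th roots of unity is obtained by analytically continuing the derivatives across the open boundary arcs and integrating. The one place you genuinely depart from the paper is the final clause of (ii). The paper asserts non-uniform convergence "since this would imply analyticity at $\zeta$," which is shaky: by Proposition \ref{hyperProps}(iii) the coefficient series $\sum c_m$ and $\sum d_m$ converge absolutely, so the Taylor series of $H_n$ and $G_n$ \emph{do} converge uniformly on $\bar U$ by the Weierstrass $M$-test (and uniform convergence on $\bar U$ would not imply boundary analyticity anyway). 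Your reinterpretation --- that the claim must concern the differentiated series, which diverges at $z=1$ because $A_m\sim(\pi m)^{-1/2}$ gives $mc_m\sim C m^{-1/2}$ --- is correct, gives the right quantitative content (it is exactly the blow-up of $h_n'$ at the roots of unity that matters downstream for the cusps), and is arguably a repair of the paper's argument rather than merely an alternative to it. The only small caution is in your analyticity step: a single-valued branch of $\sqrt{1-z^{2n}}$ does not exist on a full punctured neighborhood of $\bar U\setminus\{\zeta^j\}$ (each $\zeta^j$ is a simple zero, so the root changes sign around it); you should take the primitive on a simply connected enlargement such as a disk of radius $1+\epsilon$ slit radially outward at each $\zeta^j$, which contains $\bar U\setminus\{\zeta^j\}$ and carries the branch positive on $(-1,1)$.
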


\begin{proof}
The stated symmetries in (i) follow from the reflections $H_{n}\left(  \bar
{z}^{2n}\right)  =\overline{H_{n}\left(  z^{2n}\right)  }$ and $G_{n}\left(
\bar{z}^{2n}\right)  =\overline{G_{n}\left(  z^{2n}\right)  }$ of Proposition
\ref{hyperProps} (ii). For (ii) we easily compute $\left\vert h_{n}\left(
e^{it}\right)  \right\vert =\left\vert e^{it}H_{n}\left(  e^{i2nt}\right)
\right\vert $ and $\left\vert g_{n}\left(  e^{it}\right)  \right\vert
=\left\vert e^{i\left(  n-1\right)  t}G_{n}\left(  e^{i2nt}\right)
\right\vert /\left(  n-1\right)  $ to see that $h_{n}\left(  e^{it}\right)  $
and $g_{n}\left(  e^{it}\right)  $ have the stated magnitudes. These
magnitudes have period $\pi/n$ as functions of $t,$ since each of
$H_{n}\left(  e^{it}\right)  $ and $G_{n}\left(  e^{it}\right)  $ have period
$2\pi$ as functions of $t.$ For (iii), first note that $\left(  e^{ij\pi
/n}z\right)  ^{2n}=z^{2n}$ and $\left(  e^{ij\pi/n}\right)  ^{n-1}=e^{ij\pi
}e^{-ij\pi/n}=\left(  -1\right)  ^{j}e^{-ij\pi/n}$ for any integer $j.$ Thus
\[
h_{n}\left(  e^{ij\pi/n}z\right)  =e^{ij\pi/n}zH_{n}\left(  \left(
e^{ij\pi/n}z\right)  ^{2n}\right)  =e^{ij\pi/n}h_{n}\left(  z\right)  ,\text{
and}%
\]%
\[
g_{n}\left(  e^{ij\pi/n}z\right)  =\frac{1}{n-1}\left(  e^{ij\pi/n}z\right)
^{n-1}G_{n}\left(  \left(  e^{ij\pi/n}z\right)  ^{2n}\right)  =e^{ij\left(
\pi-\pi/n\right)  }g_{n}\left(  z\right)  .
\]
\newline For (iv), the derivatives are immediate from (\ref{hypint1}) and
(\ref{hypint2}). The stated region of analyticity of the functions $h_{n}$ and
$g_{n}$ is due to the fact that their derivatives can be analytically
continued across the boundary, except at the 2nth roots of unity where
derivatives do not exist. Moreover the convergence of $H\left(  1\right)  $
and $G\left(  1\right)  $ is not uniform, since this would imply analyticity
at $\zeta$.
\end{proof}

\begin{corollary}
\label{rayshg}Let $z=re^{ij\pi/n},$ and $j\in%
\mathbb{Z}
.$ Then
\[
h_{n}\left(  re^{ij\pi/n}\right)  =e^{ij\pi/n}h_{n}\left(  r\right)  \text{
and }\overline{g_{n}\left(  re^{ij\pi/n}\right)  }=\left(  -1\right)
^{j}e^{ij\pi/n}g_{n}\left(  r\right)
\]
and on these rays, $\arg\left(  h_{n}\left(  z\right)  \right)  =\arg z,$ and
$\arg\left(  \overline{g_{n}\left(  z\right)  }\right)  =\arg\left(  \frac
{1}{n-1}\bar{z}^{n-1}\right)  .$
\end{corollary}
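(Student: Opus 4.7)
The plan is to specialize Proposition~\ref{analco}(iii) to the case $z=r$ with $r\in[0,1]$, use the fact that $h_n(r)$ and $g_n(r)$ are positive real on this interval, and then just read off the two argument statements. Everything is already bundled into Propositions~\ref{hyperProps} and~\ref{analco}; the corollary is really a direct specialization plus a complex conjugation.

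First I would substitute $z=r$ into the two identities (\ref{ga}) from Proposition~\ref{analco}(iii), obtaining
\[
h_n(re^{ij\pi/n})=e^{ij\pi/n}h_n(r),\qquad g_n(re^{ij\pi/n})=(-1)^j e^{-ij\pi/n}g_n(r).
\]
Because the Taylor coefficients $c_m$ and $d_m$ of $H_n$ and $G_n$ are positive reals (Proposition~\ref{hyperProps}(i)), the values $H_n(r^{2n})$ and $G_n(r^{2n})$ are positive reals for $r\in[-1,1]$. Therefore $h_n(r)=rH_n(r^{2n})$ and $g_n(r)=\frac{r^{n-1}}{n-1}G_n(r^{2n})$ are real, and non-negative on $[0,1]$. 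Conjugating the second identity and using $\overline{g_n(r)}=g_n(r)$ yields the desired
\[
\overline{g_n(re^{ij\pi/n})}=(-1)^j e^{ij\pi/n}g_n(r),
\]
so both displayed equations in the corollary are established.

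For the argument statements, take $r\in(0,1]$ so that $h_n(r)>0$ and $g_n(r)>0$. Then $\arg(h_n(z))=\arg(e^{ij\pi/n})=j\pi/n=\arg z$, since $z=re^{ij\pi/n}$ with $r>0$. For the second, $\arg(\overline{g_n(z)})=\arg((-1)^j e^{ij\pi/n})\equiv j\pi+j\pi/n\pmod{2\pi}$. On the other hand, $\bar z^{n-1}=r^{n-1}e^{-ij(n-1)\pi/n}=r^{n-1}e^{i(j\pi/n-j\pi)}$ (and $1/(n-1)>0$ does not affect the argument), so $\arg\!\left(\tfrac{1}{n-1}\bar z^{n-1}\right)\equiv j\pi/n-j\pi\pmod{2\pi}$. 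Since $-j\pi\equiv j\pi\pmod{2\pi}$, the two arguments agree.

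There is essentially no real obstacle here: the content lives in the previously proved rotational symmetry identity (\ref{ga}) together with the positivity of the Taylor coefficients. The only thing to be careful about is the sign bookkeeping in the $g_n$ case, where the factor $(-1)^j=e^{ij\pi}$ has to be folded into the argument and compared, modulo $2\pi$, with the contribution $-j(n-1)\pi/n=j\pi/n-j\pi$ coming from $\bar z^{n-1}$.
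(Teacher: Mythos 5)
Your proposal is correct and follows essentially the same route as the paper: substitute $z=r$ into the rotational symmetry identity (\ref{ga}) and use the positivity of $H_n$ and $G_n$ on the reals (which the paper cites via Proposition \ref{hyperProps}(iv) and you derive equivalently from the positive Taylor coefficients) to conclude $h_n(r),g_n(r)>0$ and read off the arguments. Your explicit modulo-$2\pi$ bookkeeping for the $\overline{g_n}$ argument is a detail the paper leaves implicit, but the substance is identical.
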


\begin{proof}
By Proposition \ref{hyperProps} (iv), $H_{n}$ and $G_{n}$ map reals to
positive reals. Thus $h_{n}$ and $g_{n}$ map positive reals to positive reals.
Now let $z=re^{ij\pi/n}$ for $r>0$ in formula (\ref{ga}).
\end{proof}

We now define the rosette harmonic mappings.

\begin{definition}
\label{defnharmonic}Let $n\in%
\mathbb{N}
$ with $n\geq3,$ and let $h_{n}$ and $g_{n}$ be defined as in Definition
\ref{defnhg}. For each $\beta\in%
\mathbb{R}
,$ define the \textbf{rosette harmonic mapping}%
\[
f_{\beta}\left(  z\right)  =e^{i\beta/2}h_{n}\left(  z\right)  +e^{-i\beta
/2}\overline{g_{n}\left(  z\right)  },\text{ }z\in\bar{U}.
\]
Then $f_{\beta}\left(  z\right)  $ is harmonic on $U,$ and continuous on
$\bar{U}.$ Denote the dilatation of $f_{\beta}\left(  z\right)  $ by
$\omega\left(  z\right)  $ on $\bar{U}\backslash\left\{  \zeta^{j}%
:j=1,2,...,2n\right\}  ,$ where $\zeta$ is a primitive 2nth root of unity.
\end{definition}

\begin{remark}
For simplicity of the notation, we do not notate the value of $n,$ which is
apparent from context, and fixed as a constant in our discussions$.$ The
dilatation is independent of $\beta$ and we simplify our notation to $\omega$
instead of using $\omega_{f_{\beta}}.$
\end{remark}

The derivatives of the analytic and co-analytic parts (for $\beta=0$) in
equation (\ref{derivatives}) are the same as the corresponding derivatives for
the hypocycloid, except for the radical factor. This factor affects the
argument of the summands in the canonical decomposition, but Corollary
\ref{rayshg}\ shows that along rays $\left\{  re^{ij\pi/n}:r>0\right\}  $
where $j\in%
\mathbb{Z}
,$ that $h_{n}$ and $\overline{g_{n}}$ are collinear. Moreover Corollary
\ref{rayshg} shows that on these same rays, that $h_{n}$ and $\bar{g}_{n}$
have the same arguments as their analytic and anti-analytic counterparts in
the hypocycloid mapping. Thus $f_{0}\left(  z\right)  $ and $f_{hyp}\left(
z\right)  $ are collinear along these radial lines (as indicated in Figure
\ref{rosette0}). Figure \ref{five} shows images $f_{\beta}\left(  U\right)  $
as $\beta$ varies$,$ and suggests that different harmonic mappings $f_{\beta}$
are obtained for the four different values of $\beta$ there. This contrasts
with the hypocycloid mapping, where rotating the analytic and anti-analytic
parts $z$ and $\frac{1}{n-1}\bar{z}^{n-1}$ relative to one another does not
yield a graph that is essentially different.

One may ask what other maps could be obtained through adding different
rotations of $h_{n}$ and $\overline{g_{n}}$. The next proposition shows that
if we consider arbitrary rotations of the analytic and co-analytic parts, by
$\theta$ and $\tilde{\theta}$ say, then the result will in fact be a rotation
of a rosette harmonic mapping.

\begin{proposition}
Let $\theta$ and $\tilde{\theta}$ be arbitrary real angles. Then $e^{i\theta
}h_{n}\left(  z\right)  +\overline{e^{i\tilde{\theta}}g_{n}\left(  z\right)
}$ is a rotation $e^{i\gamma}f_{\beta}$ for some real angles $\gamma$ and
$\beta.$
\end{proposition}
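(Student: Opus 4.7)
The plan is to compare the given expression $e^{i\theta}h_{n}(z)+\overline{e^{i\tilde{\theta}}g_{n}(z)}$ with $e^{i\gamma}f_{\beta}(z)$ coefficient-by-coefficient, using the uniqueness of the canonical decomposition. First I would push the conjugation inside the second summand to rewrite the target as $e^{i\theta}h_{n}(z)+e^{-i\tilde{\theta}}\overline{g_{n}(z)}$, and expand $e^{i\gamma}f_{\beta}(z)=e^{i(\gamma+\beta/2)}h_{n}(z)+e^{i(\gamma-\beta/2)}\overline{g_{n}(z)}$ using Definition \ref{defnharmonic}.

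Since $h_{n}$ is analytic and $\overline{g_{n}}$ is anti-analytic on $U$ and neither is constant, the canonical decomposition forces the coefficients to match, yielding the system
\begin{align*}
\gamma+\beta/2 &\equiv \theta \pmod{2\pi},\\
\gamma-\beta/2 &\equiv -\tilde{\theta} \pmod{2\pi}.
\end{align*}
Adding and subtracting gives the explicit solution $\gamma=(\theta-\tilde{\theta})/2$ and $\beta=\theta+\tilde{\theta}$, which are real because $\theta$ and $\tilde{\theta}$ are. Substituting back verifies the identity $e^{i\theta}h_{n}(z)+\overline{e^{i\tilde{\theta}}g_{n}(z)}=e^{i\gamma}f_{\beta}(z)$.

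There is essentially no obstacle here; the content of the proposition is the simple algebraic observation that the two-parameter family of rotations of $h_{n}$ and $g_{n}$ reduces, up to an overall rotation, to the one-parameter family $\{f_{\beta}\}$, because a rotation by $\gamma$ of $f_{\beta}$ contributes the \emph{same} phase to both the analytic and co-analytic parts, while the parameter $\beta$ contributes \emph{opposite} phases. The only point worth flagging is that one should remark that the decomposition $e^{i\gamma}f_{\beta}=e^{i(\gamma+\beta/2)}h_{n}+e^{i(\gamma-\beta/2)}\overline{g_{n}}$ \emph{is} the canonical one (both $e^{i(\gamma+\beta/2)}h_{n}$ and $e^{i(\gamma-\beta/2)}g_{n}$ are analytic, and $h_{n}(0)=g_{n}(0)=0$), so comparing coefficients is justified and the angles $\gamma,\beta$ are uniquely determined modulo $2\pi$.
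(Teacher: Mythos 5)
Your proposal is correct and takes essentially the same route as the paper: both arguments amount to matching the phases on $h_{n}$ and $\overline{g_{n}}$ against those in $e^{i\gamma}f_{\beta}=e^{i\left(  \gamma+\beta/2\right)  }h_{n}+e^{i\left(  \gamma-\beta/2\right)  }\overline{g_{n}}$ and solving the resulting linear system (the paper does this by factoring rather than by appealing to uniqueness of the canonical decomposition, but that is a cosmetic difference, and your verification by substitution makes the uniqueness remark unnecessary anyway). One point in your favor: your value $\beta=\theta+\tilde{\theta}$ is the one consistent with Definition \ref{defnharmonic}, whereas the paper's proof concludes with $\beta=\left(  \theta+\tilde{\theta}\right)  /2$, an off-by-a-factor-of-two slip --- the parenthesized expression in the paper's own computation, $e^{i\left(  \theta+\tilde{\theta}\right)  /2}h_{n}\left(  z\right)  +e^{-i\left(  \theta+\tilde{\theta}\right)  /2}\overline{g_{n}\left(  z\right)  }$, is $f_{\theta+\tilde{\theta}}\left(  z\right)  $, not $f_{\left(  \theta+\tilde{\theta}\right)  /2}\left(  z\right)  $; this does not affect the truth of the proposition, which only asserts existence of some real $\gamma$ and $\beta$.
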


\begin{proof}
Note that the harmonic function $e^{i\theta}h_{n}\left(  z\right)
+\overline{e^{i\tilde{\theta}}g_{n}\left(  z\right)  }$ can be rewritten%
\begin{align*}
&  e^{-i\tilde{\theta}}\left(  e^{i\theta}e^{i\tilde{\theta}}h_{n}\left(
z\right)  +\overline{g_{n}\left(  z\right)  }\right) \\
&  =e^{-i\tilde{\theta}}e^{i\left(  \theta+\tilde{\theta}\right)  /2}\left(
e^{i\left(  \theta+\tilde{\theta}\right)  /2}h_{n}\left(  z\right)
+e^{-i\left(  \theta+\tilde{\theta}\right)  /2}\overline{g_{n}\left(
z\right)  }\right) \\
&  =e^{i\left(  \theta-\tilde{\theta}\right)  /2}f_{\theta+\tilde{\theta}
}\left(  z\right)
\end{align*}
Thus $e^{i\theta}h_{n}\left(  z\right)  +\overline{e^{i\tilde{\theta}}%
g_{n}\left(  z\right)  }$ can be obtained by a rotation by $\gamma=\left(
\theta-\tilde{\theta}\right)  /2$ of the map $f_{\beta}$ where $\beta=
\theta+\tilde{\theta}.$
\end{proof}

Thus the family of mappings $\left\{  f_{\beta}:\beta\in%
\mathbb{R}
\right\}  $ represents all of the different mappings, up to rotation, that
arise from arbitrary rotations of $h_{n}$ and $g_{n}$. We show in Proposition
\ref{parametrize} that $f_{\beta+\pi}$ is essentially the same mapping as
$f_{\beta}.$

\begin{proposition}
\label{parametrize}(i) For any \ $\beta\in%
\mathbb{R}
,$ the functions $f_{\beta}$ and $f_{\beta+\pi}$ are related by
\begin{equation}
f_{\beta}\left(  z\right)  =e^{-i\left(  \frac{\pi}{2}+\frac{\pi}{n}\right)
}f_{\beta+\pi}\left(  e^{i\frac{\pi}{n}}z\right)  . \label{rotbet}%
\end{equation}
Thus the image $f_{\beta+\pi}\left(  U\right)  $ is equal to a rotation of the
image $f_{\beta}\left(  U\right)  $.\newline(ii) If $\beta<0,$ then the image
of any point under $f_{\beta}$ is a reflection in the real axis of the same
point under $f_{-\beta},$ where $-\beta>0.$ \ Specifically, $f_{\beta}\left(
\bar{z}\right)  =\overline{f_{-\beta}\left(  z\right)  }.$
\end{proposition}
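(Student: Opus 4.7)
The plan is to prove both identities by direct substitution, leveraging the symmetries of $h_n$ and $g_n$ already established in Proposition \ref{analco}. Neither part should require anything beyond algebra.

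For part (i), I would start from the defining formula for $f_{\beta+\pi}(e^{i\pi/n}z)$ and simplify using two observations in tandem. First, $e^{i(\beta+\pi)/2} = ie^{i\beta/2}$ and $e^{-i(\beta+\pi)/2} = -ie^{-i\beta/2}$. Second, Proposition \ref{analco}(iii) with $j=1$ gives $h_n(e^{i\pi/n}z) = e^{i\pi/n} h_n(z)$ and $g_n(e^{i\pi/n}z) = -e^{-i\pi/n} g_n(z)$; conjugating the latter yields $\overline{g_n(e^{i\pi/n}z)} = -e^{i\pi/n}\overline{g_n(z)}$. Combining, the two sign flips appearing in the co-analytic term coalesce to a single $+$, so both summands share a common factor $ie^{i\pi/n} = e^{i(\pi/2+\pi/n)}$. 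Pulling it out gives $f_{\beta+\pi}(e^{i\pi/n}z) = e^{i(\pi/2+\pi/n)} f_\beta(z)$, and rearranging produces (\ref{rotbet}). The assertion that $f_{\beta+\pi}(U)$ is a rotation of $f_\beta(U)$ is then immediate because multiplication by the unimodular constant $e^{-i(\pi/2+\pi/n)}$ is a rigid rotation.

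For part (ii), I would write $f_\beta(\bar z) = e^{i\beta/2} h_n(\bar z) + e^{-i\beta/2} \overline{g_n(\bar z)}$ and apply the reflectional symmetries from Proposition \ref{analco}(i): $h_n(\bar z) = \overline{h_n(z)}$ and $\overline{g_n(\bar z)} = g_n(z)$. The resulting expression $e^{i\beta/2}\overline{h_n(z)} + e^{-i\beta/2} g_n(z)$ is exactly what one obtains by conjugating $f_{-\beta}(z) = e^{-i\beta/2} h_n(z) + e^{i\beta/2}\overline{g_n(z)}$, which yields the stated identity $f_\beta(\bar z) = \overline{f_{-\beta}(z)}$.

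The only point requiring attention is phase bookkeeping in part (i): one must correctly track the factor $(-1)^j = -1$ in the rotation formula for $g_n$ alongside the $\pm i$ coming from shifting $\beta$ by $\pi$, since it is the precise cancellation of these signs that produces the single common factor $e^{i(\pi/2+\pi/n)}$. A sign error at any stage would prevent the two terms from sharing a scalar factor, and the identity would fail. Beyond that, the arguments are mechanical.
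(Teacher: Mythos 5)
Your proof is correct and follows essentially the same route as the paper: part (i) by substituting $j=1$ into the rotation formula (\ref{ga}) and tracking the cancellation of the $(-1)$ with the $-i$ from shifting $\beta$ by $\pi$, and part (ii) by conjugation using the reflectional symmetries of Proposition \ref{analco}(i). No gaps.
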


\begin{proof}
(i) We compute, using $j=1$ in equation (\ref{ga}),%
\[
f_{\pi+\beta}\left(  e^{i\frac{\pi}{n}}z\right)  =e^{i\left(  \pi
/2+\beta/2\right)  }e^{i\pi/n}h_{n}\left(  z\right)  +e^{-i\left(  \pi
/2+\beta/2\right)  }\overline{\left(  -1\right)  e^{-i\pi/n}g_{n}\left(
z\right)  }.
\]
Multiplying by $e^{-i\left(  \frac{\pi}{2}+\frac{\pi}{n}\right)  },$ and
noting that $-e^{-i\frac{\pi}{2}}=e^{i\frac{\pi}{2}},$ $\ $%
\[
e^{-i\left(  \frac{\pi}{2}+\frac{\pi}{n}\right)  }f_{\pi+\beta}\left(
e^{i\frac{\pi}{n}}z\right)  =e^{i\beta/2}h_{n}\left(  z\right)  +e^{-i\beta
/2}g_{n}\left(  z\right)  =f_{\beta}\left(  z\right)  .
\]
(ii) Once again, by computation:
\[
f_{\beta}\left(  \bar{z}\right)  =e^{i\frac{\beta}{2}}h_{n}\left(  \bar
{z}\right)  +e^{-i\frac{\beta}{2}}\overline{g_{n}\left(  \bar{z}\right)
}=e^{i\frac{\beta}{2}}\overline{h_{n}\left(  z\right)  }+e^{-i\frac{\beta}{2}%
}g_{n}\left(  z\right)
\]
while
\[
f_{-\beta}\left(  z\right)  =e^{-i\frac{\beta}{2}}h_{n}\left(  z\right)
+e^{i\frac{\beta}{2}}\overline{g_{n}\left(  z\right)  }.
\]
The last two expressions are conjugates of one another. $\blacksquare$
\end{proof}

\begin{corollary}
\label{transit}Let $\tilde{\beta}\in%
\mathbb{R}
,$ and let $\tilde{\beta}=\beta+l\pi$ for some $l\in%
\mathbb{Z}
.$ Then%
\begin{equation}
f_{\beta+l\pi}\left(  z\right)  =e^{il\left(  \pi/n+\pi/2\right)  }f_{\beta
}\left(  e^{-il\pi/n}z\right)  . \label{trans}%
\end{equation}

\end{corollary}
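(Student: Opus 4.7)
The plan is to recognize that Corollary \ref{transit} is just the iterated form of equation (\ref{rotbet}) from Proposition \ref{parametrize}(i), and to prove it by induction on $l \in \mathbb{Z}$. The base case $l=0$ is the trivial identity $f_\beta(z)=f_\beta(z)$. For $l=1$, I would first rewrite (\ref{rotbet}) in the form we want: starting from
\[
f_\beta(z)=e^{-i(\pi/2+\pi/n)}f_{\beta+\pi}\bigl(e^{i\pi/n}z\bigr),
\]
I substitute $z\mapsto e^{-i\pi/n}z$ and multiply through by $e^{i(\pi/2+\pi/n)}$ to obtain
\[
f_{\beta+\pi}(z)=e^{i(\pi/n+\pi/2)}f_\beta\bigl(e^{-i\pi/n}z\bigr),
\]
which is the $l=1$ instance of (\ref{trans}).

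For the inductive step from $l$ to $l+1$ (assuming $l\geq 1$), I would apply the $l=1$ case with parameter $\beta'=\beta+l\pi$ to get
\[
f_{\beta+(l+1)\pi}(z)=e^{i(\pi/n+\pi/2)}f_{\beta+l\pi}\bigl(e^{-i\pi/n}z\bigr),
\]
and then plug the induction hypothesis into the right-hand side. The exponent of $z$ becomes $e^{-i\pi/n}\cdot e^{-il\pi/n}=e^{-i(l+1)\pi/n}$ and the leading scalar becomes $e^{i(\pi/n+\pi/2)}\cdot e^{il(\pi/n+\pi/2)}=e^{i(l+1)(\pi/n+\pi/2)}$, giving the desired identity.

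For negative $l$, one can either solve (\ref{rotbet}) for $f_\beta$ in terms of $f_{\beta+\pi}$ in the opposite direction to establish the $l=-1$ case directly, or equivalently rewrite (\ref{trans}) with $l\to -l$ and substitute $z\mapsto e^{il\pi/n}z$ to reduce the negative case to the positive case. Then the same induction works downward. The whole argument is essentially bookkeeping of the exponents $\pi/n$ and $\pi/2$; there is no real obstacle, since each application of the single-step formula contributes exactly one additional factor of $e^{i(\pi/n+\pi/2)}$ in front and rotates the argument by $e^{-i\pi/n}$, so the linear dependence on $l$ in (\ref{trans}) is immediate.
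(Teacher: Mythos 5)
Your proposal is correct and is essentially the paper's own argument: the paper also solves (\ref{rotbet}) for $f_{\beta+\pi}$ to get the single-step identity $f_{\beta+\pi}(z)=e^{i(\pi/2+\pi/n)}f_{\beta}(e^{-i\pi/n}z)$ and then obtains (\ref{trans}) by repeated application. Your version merely makes the induction (including the negative-$l$ direction) explicit.
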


\begin{proof}
Solving equation (\ref{rotbet}) we have $f_{\beta+\pi}\left(  z\right)
=e^{i\left(  \pi/2+\pi/n\right)  }f_{\beta}\left(  e^{-i\pi/n}z\right)  .$
Equation (\ref{trans}) is obtained by repeated application of this equation.
\end{proof}

Proposition \ref{parametrize} (i) shows that up to rotations (pre and post
composed), all rosette mappings are represented in the set $\left\{  f_{\beta
}:\beta\in(-\pi/2,\pi/2]\right\}  .$ We will see in Section 4 that these
rosette mappings are all distinct from one another in that no rosette mapping
in the set can be obtained by rotations from another. Proposition
\ref{parametrize} (ii) allows us to consider just $\left\{  f_{\beta}:\beta
\in\left[  0,\pi/2\right]  \right\}  $ to obtain all rosette mappings, up to
rotation and reflection$.$
\begin{figure}[h]%
\centering
\includegraphics[
height=4.3811in,
width=4.3811in
]%
{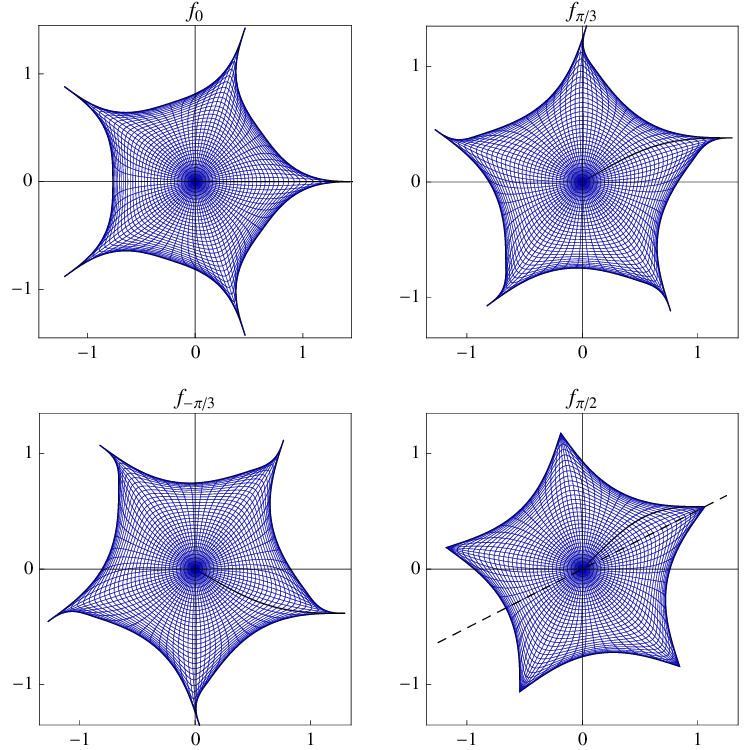}%
\caption{Images of $U$ under $f_{\beta}$ with $n=5.$ The darker "radial curve"
indicates the image of $\left[  0,1\right]  .$ The graphs $f_{\pi/3}$ and
$f_{-\pi/3}$ illustrate Proposition \ref{parametrize} (ii). The images
$f_{-\pi/3}\left(  U\right)  $ and $f_{\pi/3}\left(  U\right)  $ have cyclic
symmetry while $f_{0}\left(  U\right)  $ and $f_{\pi/2}\left(  U\right)  $
exhibit dihedral symmetry, illustrating Theorem \ref{symmetry}. For $f_{\pi
/2}$ the line of symmetry $\arg z=\pi/4-\pi/\left(  2n\right)  $ is
indicated.}%
\label{five}%
\end{figure}

Figure \ref{five} illustrates that $f_{-\pi/3}\left(  U\right)  $ and
$f_{\pi/3}\left(  U\right)  $ are reflections of one another. The example
graphs in Figure \ref{five} also demonstrate the rotational and reflectional
symmetries apparent within any particular graph $f_{\beta}\left(  U\right)  ,$
as stated in the following theorem$.$

\begin{theorem}
\label{symmetry}Let $n\in%
\mathbb{N}
$ and $n\geq3.$ \newline(i) The harmonic functions $f_{\beta}\left(  z\right)
$, $\beta\in%
\mathbb{R}
$ have dilatation $\omega\left(  z\right)  =z^{n-2}$ for $z\in U.$\newline(ii)
The rosette mapping $f_{\beta}\left(  z\right)  $, $\beta\in%
\mathbb{R}
$ has n-fold rotational symmetry, that is
\begin{equation}
f_{\beta}\left(  e^{i2k\pi/n}z\right)  =e^{i2k\pi/n}f_{\beta}\left(  z\right)
,\text{ where }z\in\bar{U},\text{ }k\in%
\mathbb{Z}
. \label{RS}%
\end{equation}
\newline(iii) If $\beta$ is an integer multiple of $\pi/2,$ then the image
$f_{\beta}\left(  U\right)  $ has reflectional symmetry$.$ For $\beta=0$ and
$\beta=\pi/2$ the reflections are%
\begin{equation}
f_{0}\left(  \overline{z}\right)  =\overline{f_{0}\left(  z\right)  }\text{
and }e^{i\eta}f_{\pi/2}\left(  e^{i\gamma}\overline{z}\right)  =\overline
{e^{i\eta}f_{\pi/2}\left(  e^{i\gamma}z\right)  }, \label{reflect}%
\end{equation}
where $\eta=\pi/\left(  2n\right)  -\pi/4$ and $\gamma=-\pi/\left(  2n\right)
.$ Thus if $z$ and $z^{\prime}$ are reflections in $\arg z=\gamma,$ then
$f_{\pi/2}\left(  z\right)  $ and $f_{\pi/2}\left(  z^{\prime}\right)  $ are
reflections in $\arg z=\eta$. \newline
\end{theorem}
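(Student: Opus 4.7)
My plan is to verify all three parts by working from the canonical decomposition of $f_\beta$ and invoking the symmetry properties of $h_n$ and $g_n$ already established in Proposition~\ref{analco}. The key preliminary observation I would record is that, since $e^{-i\beta/2}\overline{g_n(z)} = \overline{e^{i\beta/2}g_n(z)}$, the canonical decomposition of $f_\beta$ is $f_\beta = h + \bar g$ with analytic parts $h(z) = e^{i\beta/2}h_n(z)$ and $g(z) = e^{i\beta/2}g_n(z)$. From there, each symmetry reduces to substituting and tracking phase factors carefully.

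For (i), I would note that the $e^{i\beta/2}$ factors cancel in the quotient, so $\omega(z) = g'(z)/h'(z) = g_n'(z)/h_n'(z)$. Substituting the closed forms $h_n'(z) = 1/\sqrt{1-z^{2n}}$ and $g_n'(z) = z^{n-2}/\sqrt{1-z^{2n}}$ from Proposition~\ref{analco}(iv), the radicals cancel to leave $\omega(z) = z^{n-2}$, which is independent of $\beta$ as claimed. For (ii), I would set $j = 2k$ in the rotation identities (\ref{ga}) of Proposition~\ref{analco}(iii); because $(-1)^{2k} = 1$, these read $h_n(e^{i2k\pi/n}z) = e^{i2k\pi/n}h_n(z)$ and $g_n(e^{i2k\pi/n}z) = e^{-i2k\pi/n}g_n(z)$. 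Conjugating the second identity and plugging both into the definition of $f_\beta$ will factor $e^{i2k\pi/n}$ out of both summands (conjugation flips the sign of the exponent), yielding (\ref{RS}).

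Part (iii) is the most delicate, and where I expect the main obstacle. For $\beta = 0$, the computation is immediate from Proposition~\ref{analco}(i): $f_0(\bar z) = h_n(\bar z) + \overline{g_n(\bar z)} = \overline{h_n(z)} + g_n(z) = \overline{f_0(z)}$. For $\beta = \pi/2$, my plan is to evaluate $f_{\pi/2}(e^{-i\pi/n}\bar z)$ directly by combining the reflection symmetries of Proposition~\ref{analco}(i) with the rotation identities (\ref{ga}) at $j = -1$. The sign $(-1)^{-1} = -1$ from the $g_n$ piece converts $e^{-i\pi/4}$ into $e^{i3\pi/4} = -e^{-i\pi/4}$; after pulling out the common factor $e^{-i\pi/n}$, the expression should match $e^{-i2\eta}\overline{f_{\pi/2}(z)}$ precisely when $2\eta = \pi/n - \pi/2$, matching the stated $\eta = \pi/(2n) - \pi/4$, with the substitution $w = e^{i\gamma}z$ then repackaging the identity into the form displayed in (\ref{reflect}). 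The hard part will be the angle bookkeeping that forces this particular value of $\eta$ and the corresponding domain axis $\gamma = -\pi/(2n)$; intuitively $\gamma$ bisects the real axis (the $f_0$ axis of symmetry) and the ray $\arg z = -\pi/n$ (the direction introduced by the $e^{\pm i\pi/4}$ twist of $\beta = \pi/2$), but I will verify this cleanly by direct substitution rather than by heuristics. Finally, for the remaining integer multiples of $\pi/2$, I would invoke Corollary~\ref{transit}: passing $\beta \mapsto \beta + \pi$ is a pre- and post-composition with rotations, so reflective symmetry of $f_\beta(U)$ is inherited by $f_{\beta+\pi}(U)$ with the symmetry axis rotated accordingly, reducing every integer multiple to one of the two cases just handled.
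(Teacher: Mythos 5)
Your proposal is correct, and parts (i) and (ii) coincide with the paper's proof (cancellation of the $e^{i\beta/2}$ factors and the radicals for the dilatation; equation (\ref{ga}) with $j=2k$ for the rotational symmetry). Where you genuinely diverge is part (iii) for $\beta=\pi/2$. The paper substitutes $e^{i\gamma}z=e^{-i\pi/(2n)}z$ directly into $h_n$ and $g_n$; since $\pi/(2n)$ is only a \emph{half}-step of the rotational symmetry (\ref{ga}), this forces the authors to open up the hypergeometric structure and use the identities $(e^{-i\pi/(2n)}z)^{2n}=-z^{2n}$ and $(e^{i\pi/(2n)})^{n-1}=ie^{-i\pi/(2n)}$, writing everything in terms of $H_n(-z^{2n})$ and $G_n(-z^{2n})$. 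You instead prove the equivalent ``skew'' identity $f_{\pi/2}(e^{-i\pi/n}\bar z)=e^{-2i\eta}\,\overline{f_{\pi/2}(z)}$, which needs only the full-step rotation (\ref{ga}) at $j=-1$ (with the sign $(-1)^{-1}=-1$) together with the reflections of Proposition \ref{analco}(i), and then conjugate by the rotation $e^{i\gamma}$ with $e^{2i\gamma}=e^{-i\pi/n}$ to recover the displayed form (\ref{reflect}); I checked the bookkeeping and it does yield $e^{i(\pi/2-\pi/n)}\overline{f_{\pi/2}(z)}$, hence $\eta=\pi/(2n)-\pi/4$ and $\gamma=-\pi/(2n)$ exactly as stated. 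Your route never touches $H_n$ and $G_n$ and makes transparent why $\gamma$ is the half-angle that symmetrizes the reflection, at the cost of an extra change of variables; the paper's route is more direct but more computational. You also explicitly dispose of the remaining integer multiples of $\pi/2$ via Corollary \ref{transit} (reduction mod $\pi$ to the cases $0$ and $\pi/2$, with rotations preserving the existence of a reflection axis), a step the paper omits from this proof and only supplies later in Corollary \ref{noreflect}; including it here is a small improvement in completeness.
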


\begin{proof}
(i) We compute $\omega\left(  z\right)  =$ $g_{n}^{\prime}\left(  z\right)
/h_{n}^{\prime}\left(  z\right)  $ from the derivative expressions in
(\ref{derivatives}), noting that the constant $e^{i\beta/2},$ and the
radicals, cancel leaving $z^{n-2}.$ \newline(ii) From equation (\ref{ga}) with
$j=2k$ we have $h_{n}\left(  e^{i2k\pi/n}z\right)  =e^{i2k\pi/n}h_{n}\left(
z\right)  $ and $\overline{g_{n}\left(  e^{i2k\pi/n}z\right)  }=e^{i2k\pi
/n}\overline{g_{n}\left(  z\right)  }$. Thus%
\[
f_{\beta}\left(  e^{i2k\pi/n}z\right)  =e^{i2k\pi/n}\left(  e^{i\beta/2}%
h_{n}\left(  z\right)  +e^{-i\beta/2}\overline{g_{n}\left(  z\right)
}\right)  =e^{i2k\pi/n}f_{\beta}\left(  z\right)  \text{.}%
\]
\allowbreak(iii) From Proposition \ref{analco}, $f_{0}(\overline{z}%
)=h_{n}(\bar{z})+\overline{g_{n}\left(  \bar{z}\right)  }=\overline{h_{n}%
(z)}+g_{n}\left(  z\right)  =\overline{f_{0}\left(  z\right)  }.$ For
$f_{\pi/2},$ consider the expressions $f_{\pi/2}\left(  e^{i\gamma}%
\overline{z}\right)  $ and $f_{\pi/2}\left(  e^{i\gamma}z\right)  $:\newline%
\begin{align*}
f_{\pi/2}\left(  e^{i\gamma}\overline{z}\right)   &  =e^{i\pi/4}h_{n}\left(
e^{-i\pi/\left(  2n\right)  }\bar{z}\right)  +e^{-i\pi/4}\overline
{g_{n}\left(  e^{-i\pi/\left(  2n\right)  }\bar{z}\right)  }\\
&  =e^{i\pi/4}\overline{h_{n}\left(  e^{i\pi/\left(  2n\right)  }z\right)
}+e^{-i\pi/4}g_{n}\left(  e^{i\pi/\left(  2n\right)  }z\right) \\
&  =e^{i\pi/4}\overline{e^{i\pi/\left(  2n\right)  }zH\left(  -z^{2n}\right)
}+e^{-i\pi/4}ie^{-i\pi/\left(  2n\right)  }\frac{z^{n-1}G\left(
-z^{2n}\right)  }{n-1}\\
&  =e^{i\left(  \pi/4-\pi/\left(  2n\right)  \right)  }\left(  \overline
{zH\left(  -z^{2n}\right)  }+\frac{z^{n-1}G\left(  -z^{2n}\right)  }%
{n-1}\right)  .
\end{align*}
Note that $\left(  e^{i\pi/\left(  2n\right)  }\right)  ^{n-1}=ie^{-i\pi
/\left(  2n\right)  }$ and $\left(  e^{-i\pi/\left(  2n\right)  }z\right)
^{2n}=-z^{2n},$ as used in the calculation above$.$ We also use $\left(
e^{-i\pi/\left(  2n\right)  }\right)  ^{n-1}=-ie^{i\pi/\left(  2n\right)  }%
\ $below:\newline%
\begin{align*}
f_{\pi/2}\left(  e^{i\gamma}z\right)   &  =e^{i\pi/4}h_{n}\left(
e^{-i\frac{\pi}{2n}}z\right)  +e^{-i\pi/4}\overline{g_{n}\left(
e^{-i\frac{\pi}{2n}}z\right)  }\\
&  =e^{i\pi/4}\left(  e^{-i\frac{\pi}{2n}}z\right)  H\left(  e^{-i\pi}%
z^{2n}\right)  +e^{-i\pi/4}\overline{\left(  -i\right)  e^{i\frac{\pi}{2n}%
}\frac{z^{n-1}}{n-1}G\left(  -z^{2n}\right)  }\\
&  =e^{i\left(  \pi/4-\pi/\left(  2n\right)  \right)  }\left(  zH\left(
-z^{2n}\right)  +\overline{\frac{z^{n-1}}{n-1}G\left(  -z^{2n}\right)
}\right)  .
\end{align*}
Multiplying each of $f_{\pi/2}\left(  e^{i\gamma}\overline{z}\right)  $ and
$f_{\pi/2}\left(  e^{i\gamma}z\right)  $ by $e^{i\eta}=e^{i\left(  \pi
/4-\pi/\left(  2n\right)  \right)  },$ we see that $e^{i\eta}$ $f_{\pi
/2}\left(  e^{i\gamma}\overline{z}\right)  $ and $e^{i\eta}f_{\pi/2}\left(
e^{i\gamma}z\right)  $ are conjugates of one another. $\ $Thus the stated
equations in (iii) hold.
\end{proof}

In Section 4 we use features of the boundary $\partial f_{\beta}\left(
U\right)  $ that allow us to demonstrate the precise symmetry group for each
graph $f_{\beta}\left(  U\right)  $, $\beta\in%
\mathbb{R}
$ (see Corollary \ref{noreflect}). Theorem \ref{symmetry} also shows that for
a given $n\geq3$ the rosette mappings $f_{\beta}$ and $n$-cusped hypocycloid
all have the same dilatation. The equal dilatations result in similarities in
the tangents of the rosette and hypocycloid boundary curves, to be discussed
further in Section 4.

We finish this section by laying out geometric features that are specific to
rosette mappings for $\beta$ in the interval $\beta\in(-\pi/2,\pi/2].$ These
facts in combination with equation (\ref{trans}) will allow us to extend our
conclusions for any $\beta\in%
\mathbb{R}
.$

\begin{lemma}
\label{convexconcave} Let $n\in%
\mathbb{N}
,$ $n\geq3,$ and recall $K_{n}=\sqrt{\pi}\,\Gamma\left(  1+\frac{1}%
{2n}\right)  /\Gamma\left(  \frac{1}{2}+\frac{1}{2n}\right)  .$ \newline(i)
For $\beta\in(-\pi/2,\pi/2],$ we have polar forms for $f_{\beta}\left(
1\right)  $ and $f_{\beta}\left(  e^{i\pi/n}\right)  $ given by magnitudes
$\left\vert f_{\beta}\left(  1\right)  \right\vert $ and $\left\vert f_{\beta
}\left(  e^{i\pi/n}\right)  \right\vert $ which are, respectively,
\begin{equation}
K_{n}\sqrt{\sec^{2}\left(  \frac{\pi}{2n}\right)  +2\tan\left(  \frac{\pi}%
{2n}\right)  \cos\beta}\text{ and }K_{n}\sqrt{\sec^{2}\left(  \frac{\pi}%
{2n}\right)  -2\tan\left(  \frac{\pi}{2n}\right)  \cos\beta}, \label{cnmag}%
\end{equation}
and by the arguments $\psi=\arg\left(  f_{\beta}\left(  1\right)  \right)  $
and $\pi/n+\psi^{\prime}=\arg\left(  f_{\beta}\left(  e^{i\pi/n}\right)
\right)  ,$ where%
\begin{equation}
\tan\psi=\frac{1-\tan\left(  \frac{\pi}{2n}\right)  }{1+\tan\left(  \frac{\pi
}{2n}\right)  }\tan\left(  \frac{\beta}{2}\right)  \text{ and }\tan
\psi^{\prime}=\frac{1+\tan\left(  \frac{\pi}{2n}\right)  }{1-\tan\left(
\frac{\pi}{2n}\right)  }\tan\left(  \frac{\beta}{2}\right)  . \label{cnarg}%
\end{equation}
If $\beta=0$ then both $\psi$ and $\psi^{\prime}$ are zero. If $\beta=\pi/2,$
these angles reduce to \linebreak$\psi=\pi/4-\pi/\left(  2n\right)  $ and
$\psi^{\prime}=\pi/4+\pi/\left(  2n\right)  .$\newline(ii) If $\beta\in
(0,\pi/2],$ then the curves $f_{\beta}\left(  r\right)  $ and $f_{\beta
}\left(  re^{i\pi/n}\right)  $ have strictly increasing magnitude. Moreover,
$\arg\frac{\partial}{\partial r}f_{\beta}\left(  r\right)  $ decreases
strictly with $r$, and $\arg\frac{\partial}{\partial r}f_{\beta}\left(
re^{i\pi/n}\right)  ,$ increases strictly with $r$. We also have the following
arguments for the the tangents of these curves at the origin
\[
\lim_{r\rightarrow0^{+}}\arg\frac{\partial}{\partial r}f_{\beta}\left(
r\right)  =\beta/2\text{ and }\lim_{r\rightarrow0^{+}}\arg\frac{\partial
}{\partial r}f_{\beta}\left(  re^{i\pi/n}\right)  =\beta/2+\pi/n,
\]
and at the boundary of $U$
\[
\lim_{r\rightarrow1^{-}}\arg\frac{\partial}{\partial r}f_{\beta}\left(
r\right)  =0\text{ and }\lim_{r\rightarrow1^{-}}\arg\frac{\partial}{\partial
r}f_{\beta}\left(  re^{i\pi/n}\right)  =\pi/2+\pi/n.
\]
\newline(iii) For $\beta=0$ the curves $f_{0}\left(  r\right)  $ and
$f_{0}\left(  re^{i\pi/n}\right)  $ are straight line-segments - the images
have constant argument of $0$ and $\pi/n,$ respectively.
\end{lemma}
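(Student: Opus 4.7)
Parts (i) and (iii) reduce to direct substitution; the content lies in (ii), and within (ii) the only genuinely nontrivial step is showing that $|f_\beta(re^{i\pi/n})|$ is strictly increasing in $r$. I will reduce this to the elementary comparison $h_n(r) > g_n(r)$ on $(0,1]$.

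For part (i), I would feed in the boundary values supplied by Corollary \ref{rayshg} and Proposition \ref{hyperProps}(iv): $h_n(1) = K_n$, $g_n(1) = K_n\tan(\pi/(2n))$, $h_n(e^{i\pi/n}) = e^{i\pi/n}K_n$, and $\overline{g_n(e^{i\pi/n})} = -e^{i\pi/n}K_n\tan(\pi/(2n))$. This immediately yields $f_\beta(1) = K_n(e^{i\beta/2} + \tan(\pi/(2n))e^{-i\beta/2})$ and $f_\beta(e^{i\pi/n}) = K_n e^{i\pi/n}(e^{i\beta/2} - \tan(\pi/(2n))e^{-i\beta/2})$. Squaring modulus and invoking $1+\tan^2 = \sec^2$ produces \eqref{cnmag}, while separating real and imaginary parts and dividing by $\cos(\beta/2)$ produces \eqref{cnarg}. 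The stated values at $\beta = 0,\pi/2$ follow respectively from $\tan(0) = 0$ and from the tangent-subtraction identity $\tan(\pi/4 \mp \pi/(2n)) = (1\mp\tan(\pi/(2n)))/(1\pm\tan(\pi/(2n)))$.

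For part (ii), differentiating $f_\beta(re^{i\theta})$ with respect to $r$ and applying \eqref{derivatives} together with Corollary \ref{rayshg} produces
\[
\tfrac{\partial}{\partial r}f_\beta(re^{i\theta}) \;=\; \tfrac{e^{i\theta}}{\sqrt{1-r^{2n}}}\bigl(e^{i\beta/2} \mp r^{n-2}e^{-i\beta/2}\bigr) \qquad (\theta = 0,\pi/n),
\]
with the sign $-$ taken on the $\pi/n$ ray. Taking real and imaginary parts, the argument of the parenthesized factor equals $\arctan\bigl[\tan(\beta/2)\cdot(1\mp r^{n-2})/(1\pm r^{n-2})\bigr]$. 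Since $n \ge 3$ makes $r^{n-2}$ strictly increasing on $[0,1]$, the rational factor on the ray $\theta = 0$ strictly decreases from $1$ to $0$, while on the ray $\theta = \pi/n$ it strictly increases from $1$ to $+\infty$. Composition with $\arctan$ (strictly increasing) and addition of the prefactor $\pi/n$ then gives the stated strict monotonicity and boundary limits.

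The main obstacle is the modulus on the $e^{i\pi/n}$ ray, where
\[
|f_\beta(re^{i\pi/n})|^2 = h_n(r)^2 + g_n(r)^2 - 2h_n(r)g_n(r)\cos\beta
\]
carries a negative cross term. My plan is to differentiate, use $g_n'(r) = r^{n-2}h_n'(r)$ to factor out $2h_n'(r)$, and show that positivity of the remaining bracket is equivalent to $(h_n + r^{n-2}g_n)/(g_n + r^{n-2}h_n) > \cos\beta$. A short algebraic rearrangement writes the left side as $1 + (h_n - g_n)(1-r^{n-2})/(g_n + r^{n-2}h_n)$, which exceeds $1$ on $(0,1)$ whenever $h_n(r) > g_n(r)$ there; and the latter follows from $h_n(0) = g_n(0) = 0$ together with $(h_n - g_n)'(r) = (1 - r^{n-2})/\sqrt{1-r^{2n}} > 0$ on $(0,1)$. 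Because $\cos\beta < 1$ for $\beta \in (0,\pi/2]$, strict increase follows. The $\theta = 0$ ray is easier: $|f_\beta(r)|^2 = h_n^2 + g_n^2 + 2h_n g_n\cos\beta$ is a sum of products of positive, increasing quantities with $\cos\beta \ge 0$. Part (iii) is then a one-liner: at $\beta = 0$, $f_0(r) = h_n(r) + g_n(r) > 0$ and $f_0(re^{i\pi/n}) = e^{i\pi/n}(h_n(r) - g_n(r))$ with positive real scalar on $(0,1]$ by the same $h_n > g_n$ comparison, so both image curves are straight radial segments at arguments $0$ and $\pi/n$.
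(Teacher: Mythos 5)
Your proposal is correct. Parts (i), (iii), and the tangent-argument portion of (ii) follow essentially the same route as the paper: substitute the values supplied by Corollary \ref{rayshg} and Proposition \ref{hyperProps}(iv), split into real and imaginary parts, and track the monotone quantity $\arctan\bigl[\tan(\beta/2)(1\mp r^{n-2})/(1\pm r^{n-2})\bigr]$. Where you genuinely differ is on the increasing-modulus claim along the $e^{i\pi/n}$ ray. The paper computes only the speed, $\bigl|\tfrac{\partial}{\partial r}f_{\beta}(re^{i\pi/n})\bigr|^{2}=\tfrac{1-2r^{n-2}\cos\beta+r^{2n-4}}{1-r^{2n}}>0$, and from this asserts that the magnitude increases; a nonvanishing velocity does not by itself control the sign of the radial component of the motion, so that step is terse at best. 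Your direct computation
\[
\tfrac{d}{dr}\bigl|f_{\beta}(re^{i\pi/n})\bigr|^{2}=2h_{n}'(r)\bigl[(h_{n}+r^{n-2}g_{n})-(g_{n}+r^{n-2}h_{n})\cos\beta\bigr],
\]
combined with the rearrangement $(h_{n}+r^{n-2}g_{n})/(g_{n}+r^{n-2}h_{n})=1+(h_{n}-g_{n})(1-r^{n-2})/(g_{n}+r^{n-2}h_{n})>1>\cos\beta$ and the comparison $h_{n}>g_{n}$ on $(0,1)$ (from $(h_{n}-g_{n})'=(1-r^{n-2})/\sqrt{1-r^{2n}}>0$ together with equality at $0$), supplies the missing radial-growth estimate and is the more complete argument; the same $h_{n}>g_{n}$ comparison also cleanly disposes of part (iii). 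The only cosmetic caveat is that your part (i) evaluates $h_{n}$ and $g_{n}$ on $\partial U$ directly, whereas the paper passes to the limit $r\to1^{-}$; this is harmless given the continuity on $\bar{U}$ established in Proposition \ref{analco}.
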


\begin{proof}
To prove (i), we compute
\begin{align*}
f_{\beta}\left(  r\right)   &  =e^{i\beta/2}h_{n}\left(  r\right)
+e^{-i\beta/2}g_{n}\left(  r\right) \\
&  =\cos\left(  \beta/2\right)  \left(  h_{n}\left(  r\right)  +g_{n}\left(
r\right)  \right)  +i\sin\left(  \beta/2\right)  \left(  h_{n}\left(
r\right)  -g_{n}\left(  r\right)  \right)  ,
\end{align*}
and, upon recalling from Corollary \ref{rayshg} that $\overline{g_{n}\left(
re^{i\pi/n}\right)  }=e^{-i\pi/n}g_{n}\left(  r\right)  ,$
\begin{align*}
f_{\beta}\left(  re^{i\pi/n}\right)   &  =e^{i\beta/2}h_{n}\left(  re^{i\pi
/n}\right)  +e^{-i\beta/2}\overline{e^{-i\pi/n}g_{n}\left(  r\right)  }\\
&  =e^{i\pi/n}\left(  \cos\left(  \beta/2\right)  \left(  h_{n}\left(
r\right)  -g_{n}\left(  r\right)  \right)  +i\sin\left(  \beta/2\right)
\left(  h_{n}\left(  r\right)  +g_{n}\left(  r\right)  \right)  \right)  .
\end{align*}
We recall the formulae $h_{n}\left(  1\right)  =H_{n}\left(  1\right)  =K_{n}$
and $g_{n}\left(  1\right)  =G_{n}\left(  1\right)  /\left(  n-1\right)
=\tan\left(  \pi/\left(  2n\right)  \right)  K_{n}$ from Proposition
\ref{hyperProps}. Using continuity of $f_{\beta}$ on $\bar{U}$, we take limits
as $r\rightarrow1^{-}$ to get%
\begin{align*}
f_{\beta}\left(  1\right)   &  =K_{n}\left(  \cos\left(  \beta/2\right)
\left(  1+\tan\left(  \frac{\pi}{2n}\right)  \right)  +i\sin\left(
\beta/2\right)  \left(  1-\tan\left(  \frac{\pi}{2n}\right)  \right)  \right)
\text{ and}\\
f_{\beta}\left(  e^{i\pi/n}\right)   &  =e^{i\pi/n}K_{n}\left(  \cos\left(
\beta/2\right)  \left(  1-\tan\left(  \frac{\pi}{2n}\right)  \right)
+i\sin\left(  \beta/2\right)  \left(  1+\tan\left(  \frac{\pi}{2n}\right)
\right)  \right)  .
\end{align*}
Moreover, writing $\psi=\arg f_{\beta}\left(  1\right)  ,$ and $\pi
/n+\psi^{\prime}=\arg f_{\beta}\left(  e^{i\pi/n}\right)  ,$ we take ratios of
imaginary to real parts of $f_{\beta}\left(  1\right)  $ and $f_{\beta}\left(
e^{i\pi/n}\right)  $ to easily obtain the stated formulae for $\tan\psi$ and
$\tan\psi^{\prime}.$ We also readily obtain
\begin{align*}
\left\vert f_{\beta}\left(  1\right)  \right\vert ^{2}  &  =K_{n}^{2}\left(
1+2\tan\left(  \frac{\pi}{2n}\right)  \cos\beta+\tan^{2}\left(  \frac{\pi}%
{2n}\right)  \right)  \text{ and }\\
\left\vert f_{\beta}\left(  e^{i\pi/n}\right)  \right\vert ^{2}  &  =K_{n}%
^{2}\left(  1-2\tan\left(  \frac{\pi}{2n}\right)  \cos\beta+\tan^{2}\left(
\frac{\pi}{2n}\right)  \right)
\end{align*}
We can rewrite $1+\tan^{2}\left(  \pi/\left(  2n\right)  \right)  =\sec
^{2}\left(  \pi/\left(  2n\right)  \right)  $ to obtain the magnitudes stated
in (i)$.$ For (ii), taking derivatives of the formulae first derived for
$f_{\beta}\left(  r\right)  $ and $f_{\beta}\left(  re^{i\pi/n}\right)  ,$ we
obtain
\begin{align*}
\frac{\partial}{\partial r}f_{\beta}\left(  r\right)   &  =\cos\left(
\beta/2\right)  \left(  \frac{1+r^{n-2}}{\sqrt{1-r^{2n}}}\right)
+i\sin\left(  \beta/2\right)  \left(  \frac{1-r^{n-2}}{\sqrt{1-r^{2n}}%
}\right)  \text{ and }\\
\frac{\partial}{\partial r}f_{\beta}\left(  re^{i\pi/n}\right)   &
=e^{i\pi/n}\left(  \cos\left(  \beta/2\right)  \left(  \frac{1-r^{n-2}}%
{\sqrt{1-r^{2n}}}\right)  +i\sin\left(  \beta/2\right)  \left(  \frac
{1+r^{n-2}}{\sqrt{1-r^{2n}}}\right)  \right)  .
\end{align*}

The arguments of these derivatives, namely $\arg\frac{\partial}{\partial
r}f_{\beta}\left(  r\right)  $ and $\arg\frac{\partial}{\partial r}f_{\beta
}\left(  re^{i\pi/n}\right)  $ are respectively\
\[
\arctan\left(  \tan\left(  \beta/2\right)  \frac{1-r^{n-2}}{1+r^{n-2}}\right)
\text{ and }\pi/n+\arctan\left(  \tan\left(  \beta/2\right)  \frac{1+r^{n-2}%
}{1-r^{n-2}}\right)  ,
\]
whence the stated monotonicity of $\arg\frac{\partial}{\partial r}f_{\beta
}\left(  r\right)  $ and $\arg\frac{\partial}{\partial r}f_{\beta}\left(
re^{i\pi/n}\right)  $ in (ii).\linebreak The limits in (ii) are now easily
computed. Note that as $r\rightarrow1^{-},$ the ratio $\frac{1+r^{n-2}%
}{1-r^{n-2}}$ becomes infinite, and $\arctan\left(  \tan\left(  \beta
/2\right)  \frac{1+r^{n-2}}{1-r^{n-2}}\right)  $ approaches $\pi/2.\ $We can
also calculate the magnitudes $\left\vert \frac{\partial}{\partial r}f_{\beta
}\left(  r\right)  \right\vert ^{2}$ and $\left\vert \frac{\partial}{\partial
r}f_{\beta}\left(  re^{i\pi/n}\right)  \right\vert ^{2},$ respectively, as
$\frac{1\pm2r^{n-2}\cos\beta+r^{2n-4}}{1-r^{2n}}>\frac{\left(  1-r^{n-2}%
\right)  ^{2}}{1-r^{2n}}>0.$ Thus both curves $f_{\beta}\left(  r\right)  $
and $f_{\beta}\left(  re^{i\pi/n}\right)  $ have increasing magnitude as $r$
increases. Finally to prove (iii), where $\beta=0,$ we have \newline%
$f_{0}\left(  r\right)  =\left(  1/\sqrt{2}\right)  \left(  h_{n}\left(
r\right)  +g_{n}\left(  r\right)  \right)  >0$. Moreover, $\arg f_{0}\left(
re^{i\pi/n}\right)  =\frac{\pi}{n}+\arctan\left(  0\right)  =\frac{\pi}{n},$
so $f_{0}\left(  re^{i\frac{\pi}{n}}\right)  $ maps to a ray emanating from
the origin with argument $\pi/n.$
\end{proof}

\section{Cusps and Nodes}

We now examine the boundary curves for the rosette harmonic mappings, which
allows us to describe the cusps and other features that are apparent in the
graphs$.$ The boundary curve is also the key in our approach in Section 5 to
proving univalence of the rosette harmonic mappings.

For a fixed $n\geq3$, a rosette harmonic mapping $f_{\beta}$ of Definition
\ref{defnharmonic} extends analytically to $\partial U$, except at isolated
values on $\partial U$. Indeed recall that $h_{n}$ and $g_{n}$ of Definition
\ref{defnhg} are analytic except at the $2n$th roots of unity (Proposition
\ref{analco}). In contrast, the $n$-cusped hypocycloid $f_{hyp}$ of Example
1.1 extends continuously to $\bar{U}$ but with just $n$ values in $\partial U
$ where the boundary function is not regular.

Nevertheless, Figures \ref{rosette0}\ and \ref{five} show rosette mappings
$f_{\beta},$ where exactly $n$ cusps are apparent. A striking similarity
between the rosette and hypocycloid mappings is the common dilatation
$\omega\left(  z\right)  =z^{n-2}.$ We note the following consequence for a
harmonic function $f$ on $U.$ Where $\alpha\left(  t\right)  =f\left(
e^{it}\right)  $ exists with a continuous derivative $\alpha^{\prime}\left(
t\right)  $, Corollary 2.2b of \cite{HenSchob} implies that $\operatorname{Im}%
\left(  \sqrt{\omega_{f}\left(  e^{it}\right)  }\alpha^{\prime}\left(
t\right)  \right)  =0$ (see also Section 7.4 of \cite{PeterHMBook}). Thus on
intervals where $\alpha^{\prime}\left(  t\right)  $ is continuous and
non-zero, and for dilatation $\omega_{f}\left(  z\right)  =z^{n-2}$, we have
\begin{equation}
\arg\alpha^{\prime}\left(  t\right)  \equiv-\arg\sqrt{e^{i\left(  n-2\right)
t}}\equiv-\left(  n/2-1\right)  t\text{ }(\operatorname{mod}\pi).
\label{HSarg}%
\end{equation}
We will find for rosette mappings $f_{\beta}$ of Definition \ref{defnharmonic}
that have cusps, $n$ of the $2n$ singular points are ``removable" in a sense
to be described. Furthermore, for $n\geq3$ and consistent with (\ref{HSarg}),
the formula
\begin{equation}
\fbox{$\arg\alpha^\prime\left(  t\right)  =k\pi-\left(  \frac{n}{2}-1\right)
t$} \label{compass}%
\end{equation}
holds (except at possibly one point) on each of the $n$ intervals $\left(
\left(  2k-2\right)  \pi/n,2k\pi/n\right)  $, $\ k=1,2,...,n,$ both when
$\alpha$ is the boundary function of an $n$-cusped hypocycloid \textit{and
}when $\alpha$ is the boundary function of a rosette (provided it has cusps).
Formula (\ref{compass}) will not be valid for example for a rosette mapping
$f_{\pi/2}$, where there are arcs for which the boundary function is
constant$.$ To proceed we first give a definition of cusp, node, and singular
point of a curve.

\begin{definition}
\label{cusp}An \textbf{isolated singular point} on a curve $\alpha\left(
t\right)  $ is a point $\alpha\left(  t_{0}\right)  $ at which $\alpha
^{\prime}\left(  t\right)  $ is defined and non-zero in a punctured
neighborhood of $t_{0},$ but either (i) $\alpha^{\prime}\left(  t_{0}\right)
=0$ or (ii) $\alpha^{\prime}\left(  t_{0}\right)  $ is not defined. Define the
quantities
\[
\arg\alpha^{\prime}\left(  t_{0}\right)  ^{-}=\lim_{t\nearrow t_{0}^{-}}%
\arg\alpha^{\prime}\left(  t\right)  \text{ and }\arg\alpha^{\prime}\left(
t_{0}\right)  ^{+}=\lim_{t\searrow t_{0}^{+}}\arg\alpha^{\prime}\left(
t\right)
\]
where they exist. An isolated singular point for which $\arg\alpha^{\prime
}\left(  t_{0}\right)  ^{+}$and $\arg\alpha^{\prime}\left(  t_{0}\right)
^{-}$ differ by $\pi$ is defined to be a \textbf{cusp.} The line $L$ through
the cusp $\alpha\left(  t_{0}\right)  $ containing points with argument equal
to $\arg\alpha^{\prime}\left(  t_{0}\right)  ^{+}$ (or $\arg\alpha^{\prime
}\left(  t_{0}\right)  ^{-}$) is called the \textbf{axis of the cusp}, or
simply the \textbf{axis}$.$ Define a \textbf{node} to be an isolated singular
point on the curve at which $\arg\alpha^{\prime}\left(  t_{0}\right)
^{+}-\arg\alpha^{\prime}\left(  t_{0}\right)  ^{-}=\theta\not \equiv
\pi\,\left(  \operatorname{mod}2\pi\right)  .$ In this case, $\theta$ is the
\textbf{exterior angle} of the node. The interior angle at the node is then
$\pi-\theta.$ If the exterior angle is $0,$ then we call the node a
\textbf{removable node}. A node is described as a \textbf{corner} in some sources.
\end{definition}

Both $h_{n}$ and $g_{n}$ have nodes with exterior (and interior) angle
$\pi/2,$ as seen in Figure \ref{canon6}: since $\overline{g_{n}}$ is a
reflection of $g_{n}$, the nodes of $\overline{g_{6}}$ in Figure \ref{canon6}
appear with exterior angle $-\pi/2$ rather than $\pi/2$. The lower right image
in Figure \ref{five} indicates a rosette mapping with nodes rather than cusps,
but the remaining images in Figure \ref{five} show examples with cusps. The
following lemma provides a convenient way to invoke equation (\ref{compass})
and draw conclusions about cusps of the boundary extension.

\begin{lemma}
\label{cusps}Let $n\geq3,$ and $f\left(  z\right)  $ be a harmonic mapping on
$U$, with continuous extension to $\bar{U},$ so that $\alpha\left(  t\right)
=f\left(  e^{it}\right)  $ is defined on $\partial U$. Let $k=1,2,...,n.$
\newline(i)\ Suppose that $\alpha^{\prime}$ is defined and non-zero except at
$t=2k\pi/n,$ and that $\alpha$ satisfies (\ref{compass}) on each interval
$\left(  \left(  2k-2\right)  \pi/n,2k\pi/n\right)  .$ Then for each $k,$
$\alpha\left(  2k\pi/n\right)  $ is a cusp$,$ and the cusp axis has argument
$2k\pi/n.$\newline(ii)\ Suppose that $f$ has $n$-fold rotational symmetry
$f\left(  e^{i2k\pi/n}z\right)  =e^{i2k\pi/n}f\left(  z\right)  ,$ and
that\ (\ref{compass}) holds for $k=1$ on the interval $\left(  0,2\pi
/n\right)  .$ Then $\alpha$ satisfies (\ref{compass}) on each interval
$\left(  \left(  2k-2\right)  \pi/n,2k\pi/n\right)  $ and the conclusions of
(i) hold.
\end{lemma}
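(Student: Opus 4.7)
The plan is to handle the two parts separately: (i) by a direct computation of the one-sided jumps of $\arg\alpha'$ at the singular points using (\ref{compass}), and (ii) by transporting (\ref{compass}) from the arc $(0,2\pi/n)$ to all other arcs via the $n$-fold rotational symmetry, thereby reducing (ii) to (i).

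For (i), I would compute the two one-sided limits $\arg\alpha'(t_{0})^{\pm}$ at $t_{0}=2k\pi/n$. The left limit comes from (\ref{compass}) with index $k$ on $((2k-2)\pi/n,2k\pi/n)$ and evaluates to $k\pi-(n/2-1)\cdot 2k\pi/n=2k\pi/n$. The right limit comes from (\ref{compass}) with index $k+1$ on $(2k\pi/n,2(k+1)\pi/n)$ (with indices taken cyclically, so that for $k=n$ the right-hand interval wraps around to $(0,2\pi/n)$ with index $1$), and evaluates to $(k+1)\pi-(n/2-1)\cdot 2k\pi/n=\pi+2k\pi/n$. Since the two limits differ by $\pi$, Definition \ref{cusp} identifies $\alpha(t_{0})$ as a cusp, and the common direction modulo $\pi$, namely $2k\pi/n$, is the argument of the cusp axis.

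For (ii), the $n$-fold rotational symmetry $f(e^{i2k\pi/n}z)=e^{i2k\pi/n}f(z)$ translates to $\alpha(s)=e^{i2(k-1)\pi/n}\alpha(s-2(k-1)\pi/n)$ on $((2k-2)\pi/n,2k\pi/n)$. Differentiating and taking arguments gives $\arg\alpha'(s)=\arg\alpha'(s-2(k-1)\pi/n)+2(k-1)\pi/n$ modulo $2\pi$. Plugging in the $k=1$ case of (\ref{compass}) on the right side, the $s$-independent contributions combine into $\pi+(k-1)\pi=k\pi$ because of the algebraic identity $(n/2-1)\cdot 2(k-1)/n+2(k-1)/n=k-1$, and the result reduces exactly to $k\pi-(n/2-1)s$, which is (\ref{compass}) on the $k$-th arc. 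Part (i) then delivers the cusps and their axes.

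The main point of care is branch-of-argument bookkeeping: (\ref{compass}) is stated as an equality, not merely a congruence modulo $\pi$, so in (ii) I must check that the extra terms from the rotation combine precisely into the integer shift $(k-1)\pi$, which is what the above identity guarantees. Continuity of $\alpha'$ on each open arc removes any residual mod-$2\pi$ ambiguity once the branch is pinned down by the $k=1$ case, and the cyclic wrap-around at $k=n$ requires only the observation that $e^{i2\pi}=1$ makes the symmetry-based formula consistent on the arc adjacent to $t=0\equiv 2\pi$.
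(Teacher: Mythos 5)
Your proposal is correct and follows essentially the same route as the paper: part (i) is the same pair of one-sided limit computations at $t=2k\pi/n$ yielding a jump of $\pi$ in $\arg\alpha'$, and part (ii) transports (\ref{compass}) to the other arcs via the rotational symmetry exactly as the paper does with its relation $\arg\alpha(t+2k\pi/n)=2k\pi/n+\arg\alpha(t)$. Your explicit check that the rotation's constant contributions combine into precisely $(k-1)\pi$ (so that (\ref{compass}) holds as an equality, not merely mod $\pi$) is a detail the paper leaves implicit, but it is the same argument.
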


\begin{proof}
For (i), we evaluate limits at $t=\frac{2k\pi}{n}$ as follows using
(\ref{compass}):
\begin{align}
\arg\alpha^{\prime}\left(  2k\pi/n\right)  ^{-}  &  =\lim_{t\nearrow
2k\pi/n^{-}}k\pi-\left(  \frac{n}{2}-1\right)  t=\frac{2k\pi}{n},\text{
and}\label{cuspslopes}\\
\arg\alpha^{\prime}\left(  2k\pi/n\right)  ^{+}  &  =\lim_{t\searrow
2k\pi/n^{+}}\left(  k+1\right)  \pi-\left(  \frac{n}{2}-1\right)  t=\pi
+\frac{2k\pi}{n}.\nonumber
\end{align}
Thus $\arg\alpha^{\prime}\left(  2\pi/n\right)  ^{+}$ and $\arg\alpha^{\prime
}\left(  2\pi/n\right)  ^{-}$ differ by $\pi,$ so $\alpha\left(
2k\pi/n\right)  $ is a cusp. We also see the axis has argument $2k\pi/n$ (or
equivalently $\pi+2k\pi/n$). For (ii), we use the fact that $\alpha\left(
t+2k\pi/n\right)  =e^{i2k\pi/n}\alpha\left(  t\right)  $ to conclude
\begin{equation}
\arg\alpha\left(  t+2k\pi/n\right)  =2k\pi/n+\arg\alpha\left(  t\right)  .
\label{badd}%
\end{equation}
Given that (\ref{compass}) holds for $k=1,$ we can extend it to each interval
$\left(  \left(  2k-2\right)  \pi/n,2k\pi/n\right)  ,$ $k=2,3,...,n,$ using
(\ref{badd}), and so the conclusions of (i)\ hold also.
\end{proof}

\begin{remark}
\label{extend}Lemma \ref{cusps} remains valid (with appropriate adjustments to
the interval on which (\ref{compass}) holds), even if for finitely many points
in $\left(  \left(  2k-2\right)  \pi/n,2k\pi/n\right)  ,$ $\arg\alpha^{\prime
}\left(  t\right)  $ does not exist. To compute the limits (\ref{cuspslopes})
we only need equation (\ref{compass}) to hold in a neighborhood of the
endpoints $2k\pi/n$.
\end{remark}

The following proposition surely appears in the literature, but for
completeness, we use Lemma \ref{cusps} to demonstrate the properties of the
hypocycloid cusps.

\begin{proposition}
\label{hypocycloid}Let $n\geq3,$ and let $f_{hyp}\left(  z\right)  =z+\frac
{1}{n-1}\bar{z}^{n-1}$ be the hypocycloid harmonic mapping, and let
$\alpha_{hyp}\left(  t\right)  =f_{hyp}\left(  e^{it}\right)  .$ For
$k=1,2,...,n,$ formula (\ref{compass}) holds with $\alpha=\alpha_{hyp}$ for
all $t\in\left(  \left(  2k-2\right)  \pi/n,2k\pi/n\right)  .$ Moreover,
$\alpha_{hyp}$ has precisely $n$ cusps $\alpha_{hyp}\left(  2k\pi/n\right)
=\frac{n}{n-1}e^{i2k\pi/n};$ the cusp axis has argument $2k\pi/n.$ In
traversing from one cusp to the next, $\alpha_{hyp}$ has total curvature
$\pi-2\pi/n.$
\end{proposition}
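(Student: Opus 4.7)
The plan is to compute $\alpha_{hyp}'(t)$ in a convenient factored form, read off its argument, and then invoke Lemma \ref{cusps}.

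First I would differentiate $\alpha_{hyp}(t) = e^{it} + \frac{1}{n-1}e^{-i(n-1)t}$ directly to obtain $\alpha_{hyp}'(t) = i e^{it} - i e^{-i(n-1)t}$, then factor out $i e^{it}$ and apply the identity $1-e^{-int} = -2i\sin(nt/2)\,e^{-int/2}$. The calculation yields the clean expression
$$\alpha_{hyp}'(t) = -2\sin(nt/2)\,e^{-i(n/2-1)t}.$$
From this factorization two facts are immediate: $\alpha_{hyp}'$ is smooth on all of $\mathbb{R}$, and it vanishes precisely at $t = 2k\pi/n$. In particular there are exactly $n$ isolated singular points of the boundary curve in $[0,2\pi)$, and $\alpha_{hyp}'$ is non-zero on the open intervals $((2k-2)\pi/n, 2k\pi/n)$.

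Next I would determine $\arg\alpha_{hyp}'(t)$ on each such interval. On $((2k-2)\pi/n, 2k\pi/n)$, $nt/2$ lies in $((k-1)\pi, k\pi)$, so $\sin(nt/2)$ has sign $(-1)^{k-1}$, which makes the real factor $-2\sin(nt/2)$ have sign $(-1)^k$. Hence
$$\arg\alpha_{hyp}'(t) \equiv k\pi - (n/2-1)t \pmod{2\pi},$$
which is exactly formula (\ref{compass}). This verifies the hypotheses of Lemma \ref{cusps}(i), so each $\alpha_{hyp}(2k\pi/n)$ is a cusp whose axis has argument $2k\pi/n$. A direct substitution gives
$$\alpha_{hyp}(2k\pi/n) = e^{i2k\pi/n} + \frac{1}{n-1}e^{-i(n-1)2k\pi/n} = \Bigl(1 + \frac{1}{n-1}\Bigr) e^{i2k\pi/n} = \frac{n}{n-1} e^{i2k\pi/n},$$
using $e^{-i(n-1)2k\pi/n} = e^{i2k\pi/n}e^{-i2k\pi} = e^{i2k\pi/n}$.

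Finally, the total curvature across a smooth arc between consecutive cusps equals the signed change in $\arg\alpha_{hyp}'(t)$. Using formula (\ref{compass}) on the interval $((2k-2)\pi/n, 2k\pi/n)$, the change from the one-sided right limit at $(2k-2)\pi/n$ to the one-sided left limit at $2k\pi/n$ is
$$\Bigl[k\pi-\tfrac{n-2}{2}\cdot\tfrac{2k\pi}{n}\Bigr] - \Bigl[k\pi - \tfrac{n-2}{2}\cdot\tfrac{(2k-2)\pi}{n}\Bigr] = -\tfrac{n-2}{n}\pi,$$
whose absolute value is $\pi - 2\pi/n$, as claimed. The only subtlety — which is not really an obstacle — is keeping the modular arithmetic in $\arg\alpha_{hyp}'(t)$ consistent across intervals so that the stated formula holds on the nose (not just modulo $\pi$); the factorization above resolves this by tracking the sign of $\sin(nt/2)$ explicitly.
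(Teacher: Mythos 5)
Your proof is correct, and it reaches the same formula (\ref{compass}) and then invokes Lemma \ref{cusps} just as the paper does; the difference is in how you compute $\arg\alpha_{hyp}'$. The paper treats $\alpha_{hyp}'$ as a sum of the two unit-magnitude tangents $ie^{it}$ and $ie^{-i(n-1)t}$, takes the argument of the sum to be the mean of suitably chosen branches of their arguments on $(0,2\pi/n)$, and then propagates the formula to the other intervals via the rotational symmetry of $f_{hyp}$ and Lemma \ref{cusps}(ii). You instead produce the closed-form factorization $\alpha_{hyp}'(t)=-2\sin(nt/2)\,e^{-i(n/2-1)t}$, which handles all $n$ intervals at once by tracking the sign of $\sin(nt/2)$, exhibits the vanishing of $\alpha_{hyp}'$ exactly at $t=2k\pi/n$ without separate comment, and lets you apply Lemma \ref{cusps}(i) directly without appealing to rotational symmetry. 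That is arguably the cleaner route for this particular boundary curve, though it does not generalize to the rosette boundary functions, where the two summands have nonconstant and unequal phases and the mean-of-arguments device is the one that carries over. One small slip: the half-angle identity should read $1-e^{-int}=2i\sin(nt/2)\,e^{-int/2}$ (with $+2i$, not $-2i$); your final factored expression for $\alpha_{hyp}'$ is nevertheless the correct one, as one can check at, say, $n=3$, $t=\pi/3$.
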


\begin{proof}
We have $\alpha_{hyp}\left(  t\right)  =h\left(  e^{it}\right)  +\bar
{g}\left(  e^{it}\right)  $ where $h\left(  e^{it}\right)  =e^{it}$ and
$g\left(  e^{it}\right)  =\frac{1}{n-1}e^{i\left(  n-1\right)  t}.$ We already
noted the singular points at $2k\pi/n$ in Example 1.1. With $z=e^{it}$ we
apply the chain rule, obtaining $\frac{d}{dt}h\left(  e^{it}\right)  =ie^{it}$
and $\frac{d}{dt}g\left(  e^{it}\right)  =ie^{i\left(  n-1\right)  t}.$ The
magnitudes are both $1,$ so $\alpha_{hyp}^{\prime}\left(  t\right)  $ will
have argument equal to the mean of $\arg\frac{d}{dt}h\left(  e^{it}\right)  $
and $\arg\frac{d}{dt}g\left(  e^{-it}\right)  ,$ when (for example) we choose
branches of the arguments that lie within $\pi$ of one another. To this end,
we take
\begin{equation}
\arg\frac{d}{dt}h\left(  e^{it}\right)  =\pi/2+t\text{ and }\arg\frac{d}%
{dt}g\left(  e^{-it}\right)  =3\pi/2-\left(  n-1\right)  t \label{hypderivs}%
\end{equation}
on the interval $\left(  0,2\pi/n\right)  .$ Thus the mean is $\pi-\left(
n/2-1\right)  t,$ which is equation (\ref{compass}) for $k=1$. We also have
$f_{hyp}\left(  e^{i2k\pi/n}z\right)  =e^{i2k\pi/n}z+\frac{1}{n-1}\left(
e^{-i2k\pi/n}\bar{z}\right)  ^{n-1}.$ We factor out the $e^{i2k\pi/n},$ using
$\left(  e^{-i2k\pi/n}\right)  ^{n-1}=e^{-i2k\pi/n},$ and obtain $e^{i2k\pi
/n}f_{hyp}\left(  z\right)  ,$ showing that $f_{hyp}$ has rotational
symmetry$.$ By Lemma \ref{cusps} (ii), $\alpha_{hyp}\left(  2k\pi/n\right)  $
is a cusp and the axis has argument $2k\pi/n$ for each $k=1,2,...,n.$ Using
$z=1$ above, we also obtain $f_{hyp}\left(  e^{i2k\pi/n}\right)  =e^{i2k\pi
/n}+\frac{1}{n-1}\left(  e^{i2k\pi/n}\right)  =\frac{n}{n-1}e^{i2k\pi/n}$. The
total curvature of $\alpha_{hyp}$ is measured with the change in argument of
the unit tangent, or equivalently the change in $\arg\alpha_{hyp}^{\prime}.$
Since this is monotonic and linear in $t$ (equation (\ref{compass})), the
total change in $\arg\alpha_{hyp}^{\prime}$ over any of the given intervals is
equal to $\left(  n/2-1\right)  $ times the interval length $2\pi/n$, and so
we obtain $\pi-2\pi/n$.
\end{proof}

We compute formulae for the derivatives $\left(  d/dt\right)  h_{n}\left(
e^{it}\right)  $ and $\left(  d/dt\right)  g_{n}\left(  e^{it}\right)  .$ In
contrast with the hypocycloid, the arguments $\left(  d/dt\right)
h_{n}\left(  e^{it}\right)  $ and $\left(  d/dt\right)  \overline{g_{n}\left(
e^{it}\right)  } $ differ by a constant angle of $\pm\pi/2.$

\begin{proposition}
\label{linearangles}Let $n\geq3,$ and let $\zeta$ be a primitive 2nth root of
unity, and consider $h_{n}$ and $g_{n}$ defined in Definition \ref{defnhg},
analytic on $\bar{U}\backslash\left\{  \zeta^{j}:j=1,2,...,2n\right\}  .$ Let
$j=1,2,...,2n.$\newline i) On each interval $\left(  \left(  j-1\right)
\pi/n,j\pi/n\right)  ,\ $derivatives $d/dt$ of both $h_{n}\left(
e^{it}\right)  $ and $g_{n}\left(  e^{it}\right)  $ have magnitude
$1/\left\vert \sqrt{1-e^{i2nt}}\right\vert ,$ and the arguments are linear
monotonic functions, expressible as%
\begin{align}
\arg\left(  \frac{d}{dt}h_{n}\left(  e^{it}\right)  \right)   &
=3\pi/4-\left(  n/2-1\right)  t+\left(  j-1\right)  \pi/2,\text{
and}\label{argghh}\\
\arg\left(  \frac{d}{dt}g_{n}\left(  e^{it}\right)  \right)   &
=3\pi/4+\left(  n/2-1\right)  t+\left(  j-1\right)  \pi/2. \label{argghg}%
\end{align}
\newline(ii) The functions $h_{n}\left(  e^{it}\right)  $ and $g_{n}\left(
e^{it}\right)  $ each have $2n$ singular points $h_{n}\left(  e^{ij\pi
/n}\right)  $ and $g_{n}\left(  e^{ij\pi/n}\right)  ,$ which are each nodes
with exterior (and interior) angle $\pi/2.$\newline(iii) The difference in the
arguments $\arg\frac{d}{dt}h_{n}\left(  e^{it}\right)  -\arg\frac{d}%
{dt}\overline{g_{n}}\left(  e^{it}\right)  $ is constant on $\left(  \left(
j-1\right)  \pi/n,j\pi/n\right)  ,$ and is alternately $+\pi/2$ when $j$ is
even, and $-\pi/2$ when $j$ is odd$.$
\end{proposition}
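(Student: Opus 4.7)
The plan is to start from the derivative formulas of Proposition \ref{analco}(iv) and apply the chain rule, giving
\[
\frac{d}{dt}h_n(e^{it}) = \frac{ie^{it}}{\sqrt{1-e^{i2nt}}}, \qquad \frac{d}{dt}g_n(e^{it}) = \frac{ie^{i(n-1)t}}{\sqrt{1-e^{i2nt}}}.
\]
The magnitude claim in (i) is then immediate, since the numerators are unimodular. The monotonicity and (iii) both reduce to understanding the argument of the denominator on each arc, and (ii) will come from evaluating one-sided limits at the endpoints.

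The main work, and the chief obstacle, is to identify the correct branch of $\sqrt{1-e^{i2nt}}$ on each arc $((j-1)\pi/n, j\pi/n)$, since $h_n^\prime$ is defined by the branch of $(1-z^{2n})^{-1/2}$ that equals $1$ at the origin, and this branch must be analytically continued separately on each arc between consecutive $2n$th roots of unity. My strategy is to pin the branch down at the midpoint $t_j^\ast = (2j-1)\pi/(2n)$ of each arc, where $e^{i2nt_j^\ast} = -1$ and hence $1-e^{i2nt_j^\ast} = 2$. Along the radial segment from $0$ to $e^{it_j^\ast}$ inside the disk one has $1-z^{2n} = 1 + r^{2n} > 0$, so the analytically continued square root is the positive real $\sqrt{2}$ at the midpoint, i.e.\ $\arg\sqrt{1-e^{i2nt_j^\ast}} = 0$. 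Combining this with the identity $1 - e^{i2nt} = -2i\sin(nt)e^{int}$ and continuity on the open arc yields
\[
\arg\sqrt{1-e^{i2nt}} = \frac{nt}{2} - \frac{\pi}{4} - \frac{(j-1)\pi}{2} \quad\text{on}\quad t \in ((j-1)\pi/n,\,j\pi/n).
\]
Substituting into $\arg(ie^{it}) - \arg\sqrt{1-e^{i2nt}}$ and $\arg(ie^{i(n-1)t}) - \arg\sqrt{1-e^{i2nt}}$ produces the formulas (\ref{argghh}) and (\ref{argghg}). Monotonicity follows because the $t$-coefficient is $\mp(n/2 - 1) \ne 0$ for $n \geq 3$.

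For (ii), I will compute the one-sided limits of (\ref{argghh}) at $t = j\pi/n$ coming from arcs $j$ and $j+1$; the formulas differ only by the constant term $(j-1)\pi/2$ versus $j\pi/2$, giving a jump of $\pi/2$. Thus $h_n(e^{ij\pi/n})$ is an isolated singular point in the sense of Definition \ref{cusp} with exterior (and interior) angle $\pi/2$, i.e.\ a node; the same computation for (\ref{argghg}) handles $g_n$. For (iii), since conjugation reverses argument, $\arg\frac{d}{dt}\overline{g_n}(e^{it}) = -\arg\frac{d}{dt}g_n(e^{it})$, and subtracting (\ref{argghh}) from the negation of (\ref{argghg}) gives
\[
\arg\tfrac{d}{dt}h_n(e^{it}) - \arg\tfrac{d}{dt}\overline{g_n}(e^{it}) = \tfrac{3\pi}{2} + (j-1)\pi,
\]
which reduces modulo $2\pi$ to $-\pi/2$ for $j$ odd and $+\pi/2$ for $j$ even. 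The only subtle step throughout is the branch identification at step one; once that is fixed, everything else is algebra.
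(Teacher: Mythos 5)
Your proposal is correct, and the overall skeleton matches the paper's: chain rule to get $\frac{d}{dt}h_n(e^{it})=ie^{it}/\sqrt{1-e^{i2nt}}$ and $\frac{d}{dt}g_n(e^{it})=ie^{i(n-1)t}/\sqrt{1-e^{i2nt}}$, then everything reduces to tracking the argument of the denominator, one-sided limits at $t=j\pi/n$ for (ii), and adding the two argument formulas for (iii). Where you genuinely diverge is the branch determination, which is the only delicate point. The paper works only on $(0,\pi/n)$, writes $-\tfrac12\arg(1-e^{i2nt})$ as $\tfrac12\arctan\frac{\sin(2nt)}{1-\cos(2nt)}$, reduces it via a half-angle cotangent identity, sanity-checks the branch at $t=\pi/(2n)$, and then propagates the formula to the other $2n-1$ arcs using the rotational symmetry relations (\ref{ga}) for $h_n$ and $g_n$. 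You instead anchor the branch on \emph{every} arc at its midpoint $t_j^\ast=(2j-1)\pi/(2n)$ by radial analytic continuation (where $1-z^{2n}=1+r^{2n}>0$ along the whole radius, forcing $\arg\sqrt{1-e^{i2nt_j^\ast}}=0$), and then use the factorization $1-e^{i2nt}=-2i\sin(nt)e^{int}$ to see that the continuous branch of $\arg\sqrt{1-e^{i2nt}}$ is $nt/2$ plus an arc-dependent constant; your resulting formula $nt/2-\pi/4-(j-1)\pi/2$ agrees with the paper's on $(0,\pi/n)$ and with (\ref{argghh})--(\ref{argghg}) in general. Your route is uniform in $j$, avoids any appeal to the symmetry equations (\ref{ga}), and makes the branch choice fully rigorous rather than "evidently consistent"; the paper's route avoids the factorization identity and leans on structure already established in Proposition \ref{analco}. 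One small wording slip: in (iii) you say you subtract (\ref{argghh}) from the negation of (\ref{argghg}), but the quantity $\arg\frac{d}{dt}h_n-\arg\frac{d}{dt}\overline{g_n}$ is the \emph{sum} of (\ref{argghh}) and (\ref{argghg}); your displayed value $3\pi/2+(j-1)\pi$ is that sum and is correct, so this is only a misstatement of the operation, not of the result.
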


\begin{proof}
(i) Recall the derivatives $\frac{dh}{dz}=\frac{1}{\sqrt{1-z^{2n}}}$ and
$\frac{dg}{dz}=\frac{z^{n-2}}{\sqrt{1-z^{2n}}}$ of Proposition \ref{analco}
(iii). \ We have $\frac{d}{dt}h_{n}\left(  e^{it}\right)  =\frac{ie^{it}%
}{\sqrt{1-e^{i2nt}}}$ and $\frac{d}{dt}g_{n}\left(  e^{it}\right)
=\frac{ie^{i\left(  n-1\right)  t}}{\sqrt{1-e^{i2nt}}}.$ Each derivative has
magnitude $1/\left\vert \sqrt{1-e^{i2nt}}\right\vert ,$ and singular points
occur at the 2nth roots of unity, where $e^{i2nt}=1.$ We compute
\begin{align*}
\arg\frac{d}{dt}h_{n}\left(  e^{it}\right)   &  =\arg ie^{it}-\frac{1}{2}%
\arg\left(  1-e^{i2nt}\right) \\
&  =\pi/2+t+\frac{1}{2}\arctan\frac{\sin\left(  2nt\right)  }{1-\cos\left(
2nt\right)  }.
\end{align*}
The latter term reduces to $\pi/4-\left(  n/2\right)  t,$ which can be seen
for example using the half angle formula for cotangent; $\operatorname{arccot}%
\frac{\sin\left(  2nt\right)  }{1-\cos\left(  2nt\right)  }=nt$ and $\arctan
X=\pi/2-\operatorname{arccot}X.$ Thus on $\left(  0,\pi/n\right)  ,$%
\[
\arg\frac{d}{dt}h_{n}\left(  e^{it}\right)  =3\pi/4-\left(  n/2-1\right)  t.
\]
At $t=\pi/\left(  2n\right)  ,$ $\sqrt{1-e^{i2nt}}$ becomes real and
$\arg\frac{d}{dt}h_{n}\left(  e^{i\pi/2n}\right)  =\arg\left(  ie^{i\pi
/2n}\right)  =\pi/2+\pi/\left(  2n\right)  ,$ which is consistent with our
formula on $\left(  0,\pi/n\right)  $; this choice of branch of $\arctan$
gives $\arg\frac{d}{dt}h_{n}\left(  e^{it}\right)  $ an ``initial" value
$3\pi/4$ (the limit as $t\searrow0^{+}$) and is evidently consistent with the
argument at $t=e^{i\pi/2n}$ (see also Figure \ref{canon6}). From the
rotational symmetry equation (\ref{ga}) for $h_{n}$%
\[
\arg\frac{d}{dt}h_{n}\left(  e^{i\left(  t+j\pi/n\right)  }\right)  =\arg
\frac{d}{dt}h_{n}\left(  e^{it}\right)  +j\pi/n.
\]
Adding $\left(  j-1\right)  \pi/2$ extends our formula for $\arg\frac{d}%
{dt}h_{n}\left(  e^{it}\right)  $ from $\left(  0,\pi/n\right)  $ to the
interval $\left(  \left(  j-1\right)  \pi/n,j\pi/n\right)  ,$ giving
(\ref{argghh}). Similarly on $\left(  0,\pi/n\right)  $ we obtain
\[
\arg\frac{d}{dt}g_{n}\left(  e^{it}\right)  =\pi/2+\left(  n-1\right)
t+\frac{1}{2}\arctan\frac{\sin\left(  2nt\right)  }{1-\cos\left(  2nt\right)
}=3\pi/4+\left(  n/2-1\right)  t,
\]
a branch of the argument for which $\arg\frac{d}{dt}g_{n}\left(  e^{i\pi
/2n}\right)  =\pi-\pi/\left(  2n\right)  $ as expected$,$ so again with
initial value $3\pi/4$ on $\left(  0,\pi/n\right)  .$ From the rotational
symmetry equation (\ref{ga}) for $g_{n}$ we have $g_{n}\left(  e^{ij\pi
/n}z\right)  =e^{ij\left(  \pi-\pi/n\right)  }g_{n}\left(  z\right)  ,$ so
\[
\arg\frac{d}{dt}g_{n}\left(  e^{i\left(  t+j\pi/n\right)  }\right)  =\arg
\frac{d}{dt}g_{n}\left(  e^{it}\right)  +j\left(  \pi-\pi/n\right)  .
\]
We extend our formula for $\arg\frac{d}{dt}g_{n}\left(  e^{it}\right)  $ for
$t\in$ $\left(  \left(  j-1\right)  \pi/n,j\pi/n\right)  $ as before, adding
$\left(  j-1\right)  \pi/2$, leading to equation (\ref{argghg}). For (ii) we
note that as we pass from the interval $\left(  \left(  j-1\right)  \pi
/n,j\pi/n\right)  $ to $\left(  j\pi/n,\left(  j+1\right)  \pi/n\right)  ,$
both $\arg\frac{d}{dt}h_{n}\left(  e^{it}\right)  $ and $\arg\frac{d}{dt}%
g_{n}\left(  e^{it}\right)  $ increase by $\pi/2$ at $j\pi/n.$ Thus
$h_{n}\left(  e^{ij\pi/n}\right)  $ and $g_{n}\left(  e^{ij\pi/n}\right)  $
each are nodes with exterior angle $\pi/2.$ To prove (iii) we compute the
difference in $\arg\frac{d}{dt}h_{n}\left(  e^{it}\right)  $ and $\arg\frac
{d}{dt}\overline{g_{n}}\left(  e^{it}\right)  $ as $\arg\frac{d}{dt}%
h_{n}\left(  e^{it}\right)  +\arg\frac{d}{dt}g_{n}\left(  e^{it}\right)  ,$ so
for \newline$t\in\left(  \left(  j-1\right)  \pi/n,j\pi/n\right)  ,$
\begin{equation}
\arg\frac{d}{dt}h_{n}\left(  e^{it}\right)  -\arg\frac{d}{dt}\overline{g_{n}%
}\left(  e^{it}\right)  =3\pi/2+\left(  j-1\right)  \pi=\left(  2j+1\right)
\pi/2. \label{hgdiff}%
\end{equation}

\end{proof}

\begin{remark}
The rosette mappings are distinguished from the hypocycloid in that $\arg$
$\frac{d}{dt}h_{n}\left(  e^{it}\right)  $ and $\arg\frac{d}{dt}%
\overline{g_{n}\left(  e^{it}\right)  }$ are decreasing in lockstep. As a
result the curves $h_{n}\left(  e^{it}\right)  $ and $\overline{g_{n}}\left(
e^{it}\right)  ,$ and ultimately $g_{n}\left(  e^{it}\right)  $ must be rigid
motions of one another, as illustrated in Figure \ref{canon6}, and Figure
\ref{transmirror}, and proved in Corollary \ref{rigid}. For the hypocycloid,
the arguments of the derivatives of the analytic and anti-analytic parts
(\ref{hypderivs}) are also linear, but with non-equal slopes with differing sign.
\end{remark}

\begin{corollary}
\label{rigid}The graph $h_{n}\left(  e^{it}\right)  $ on an interval $\left(
\left(  j-1\right)  \pi/n,j\pi/n\right)  $ and the graph of $g_{n}\left(
e^{it}\right)  $ on an interval $\left(  \left(  j^{\prime}-1\right)
\pi/n,j^{\prime}\pi/n\right)  $ are identical, up to a translation and
rotation$,$ where $j,j^{\prime}=1,2,...,2n.$ Moreover the two curves have
opposite orientation.
\end{corollary}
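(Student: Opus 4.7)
The plan is to produce an explicit orientation-reversing reparametrization $s(t) = (j+j'-1)\pi/n - t$ sending $((j-1)\pi/n, j\pi/n)$ onto $((j'-1)\pi/n, j'\pi/n)$ in reverse, and to show that $\frac{d}{dt}[g_n(e^{is(t)})] = e^{i\theta_0}\,\frac{d}{dt}h_n(e^{it})$ for some real constant $\theta_0$ depending only on $j, j', n$. Once this derivative-level identity is in hand, integrating from any base point $t_0$ yields
\[
g_n(e^{is(t)}) - g_n(e^{is(t_0)}) = e^{i\theta_0}\bigl(h_n(e^{it}) - h_n(e^{it_0})\bigr),
\]
which is precisely the assertion that the $g_n$-arc is obtained from the $h_n$-arc by rotation by $\theta_0$ followed by translation. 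The opposite orientation is immediate from $s'(t) < 0$.

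For equality of moduli, I would note that $e^{i2ns(t)} = e^{i2(j+j'-1)\pi}e^{-i2nt} = e^{-i2nt}$, hence $|1 - e^{i2ns(t)}| = |1 - e^{i2nt}|$. Combined with $|s'(t)| = 1$ and Proposition \ref{linearangles}(i), this forces $\bigl|\tfrac{d}{dt}[g_n(e^{is(t)})]\bigr| = \bigl|\tfrac{d}{dt}h_n(e^{it})\bigr|$ identically on the interval.

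For the arguments, I would invoke formulas (\ref{argghh}) and (\ref{argghg}). The function $\arg \frac{d}{dt}h_n(e^{it})$ is linear in $t$ with slope $-(n/2-1)$, while $\arg \frac{d}{ds}g_n(e^{is})$ is linear in $s$ with slope $+(n/2-1)$. Composing with the decreasing linear map $s(t)$ flips the latter slope to $-(n/2-1)$, and the extra $\pi$ contributed by $\arg s'(t) = \arg(-1)$ is a constant. Consequently the linear-in-$t$ parts cancel in the difference
\[
\theta_0 := \arg \tfrac{d}{dt}\bigl[g_n(e^{is(t)})\bigr] - \arg \tfrac{d}{dt}h_n(e^{it}),
\]
leaving a constant depending only on $j, j', n$.

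I expect the main obstacle to be purely bookkeeping: tracking the branch conventions of Proposition \ref{linearangles} so that the piecewise formulas (\ref{argghh}) and (\ref{argghg}) apply verbatim on corresponding intervals, since one must match $t \in ((j-1)\pi/n, j\pi/n)$ with $s(t) \in ((j'-1)\pi/n, j'\pi/n)$ and keep the constants $(j-1)\pi/2$ and $(j'-1)\pi/2$ straight. The cancellation of the linear-in-$t$ contributions is automatic once the formulas are correctly aligned, so the existence of $\theta_0$ as a constant, and hence the rigid-motion equivalence of the two arcs, follows.
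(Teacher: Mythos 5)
Your proof is correct, but it follows a genuinely different route from the paper's. The paper first shows that $h_{n}\left(e^{it}\right)$ and $\overline{g_{n}\left(e^{it}\right)}$ are \emph{orientation-preserving} congruent: it uses Proposition \ref{linearangles} to see that the two tangent vectors have equal modulus and a constant argument difference, concludes equal arclength and curvature, and invokes the fundamental theorem of plane curves. It then converts this into the stated claim about $g_{n}$ itself by using the $D_{2n}$ symmetry of the graph of $h_{n}\left(e^{it}\right)$ (so that $\overline{h_{n}\left(e^{it}\right)}$ is a rotated, orientation-reversed copy of $h_{n}\left(e^{it}\right)$) and finally conjugating. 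You bypass both of these steps: your explicit orientation-reversing parameter change $s(t)=(j+j'-1)\pi/n-t$ maps the intervals correctly, the identity $e^{i2ns(t)}=e^{-i2nt}$ gives $\left\vert 1-e^{i2ns(t)}\right\vert=\left\vert 1-e^{i2nt}\right\vert$ and hence equal speeds, and the opposite signs of the slopes $\mp(n/2-1)$ in (\ref{argghh}) and (\ref{argghg}) mean the time reversal aligns the two tangent fields up to a fixed unimodular factor $e^{i\theta_{0}}$; integrating the resulting derivative identity produces the rotation and translation explicitly, and $s'(t)=-1<0$ gives the orientation reversal. Your version is more self-contained (no fundamental theorem of plane curves, no appeal to the reflectional symmetry of $h_{n}$), handles arbitrary $j,j'$ in one computation, and yields the rotation angle and translation in closed form; the paper's version is shorter on computation and makes the conceptual reason (equal arclength and curvature, plus the dihedral symmetry of $h_{n}(\partial U)$) more visible. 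The only care needed in your argument is the branch bookkeeping you already flag, and that the derivative identity is applied only on the open interval where both derivatives are nonzero, which is exactly where Proposition \ref{linearangles} is stated.
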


\begin{proof}
The tangents $\frac{d}{dt}h_{n}\left(  e^{it}\right)  $ and $\frac{d}%
{dt}\overline{g_{n}}\left(  e^{it}\right)  $ have equal magnitudes on
\linebreak$\left(  \left(  j-1\right)  \pi/n,j\pi/n\right)  $, where their
arguments differ by a constant. Thus $h_{n}\left(  e^{it}\right)  $ and
$\overline{g_{n}\left(  e^{it}\right)  }$ have equal arclength and curvature,
and so are equal up to a translation and rotation by the fundamental theorem
of plane curves. Both $h_{n}\left(  e^{it}\right)  $ and $g_{n}\left(
e^{it}\right)  $ have rotational symmetry (Proposition \ref{analco}) so the
previous statement is true even when $h_{n}\left(  e^{it}\right)  $ and
$g_{n}\left(  e^{it}\right)  $ are defined on different arcs. Proposition
\ref{analco} also shows that the curve $h_{n}\left(  e^{int}\right)  $ also
has reflectional symmetry, so the graph of $h_{n}$ has symmetry group
$D_{2n}.$ Thus $\overline{h_{n}\left(  e^{it}\right)  }$ is also a rotation of
$h_{n}\left(  e^{it}\right)  $ on any interval $\left(  \left(  j-1\right)
\pi/n,j\pi/n\right)  ,$ where the pair has opposite orientation. We conclude
$\overline{h_{n}\left(  e^{it}\right)  }$ and $\overline{g_{n}\left(
e^{it}\right)  }$ are rigid motions of one another with opposite orientation,
and the Corollary follows upon conjugation.
\end{proof}

Proposition \ref{linearangles} allows us to compute the derivative of the
boundary function of a rosette harmonic mapping. The cosine rule for triangles
is useful for adding numbers of the same magnitude, and we recall its
application in the following remark.

\begin{remark}
\label{cosine} For $X>0,$ the cosine rule yields
\[
\left\vert Xe^{i\theta_{1}}+Xe^{i\theta_{2}}\right\vert =X\left\vert
e^{i\theta_{1}}+e^{i\theta_{2}}\right\vert =\sqrt{2}X \sqrt{ 1+\cos\left\vert
\theta_{1}-\theta_{2}\right\vert } .
\]

\end{remark}

\begin{theorem}
\label{fbdry}For $n\geq3$ and $\beta\in(-\pi/2,\pi/2],$ and let $f_{\beta}$ be
a rosette harmonic mapping defined in Definition \ref{defnharmonic}. Consider
the boundary curve $\alpha_{\beta}\left(  t\right)  =f_{\beta}\left(
e^{it}\right)  ,$ $t\in\partial U.$ The derivative $\alpha_{\beta}^{\prime
}\left(  t\right)  $ exists and is continuous on $\partial U,$ except at the
$2n$ multiples of $\pi/n.$ Let $k=1,2,...,n.$ \newline(i) For $\left\vert
\beta\right\vert <\pi/2,$ $\alpha_{\beta}$ satisfies (\ref{compass}) on
$\left(  \left(  2k-2\right)  \pi/n,2k\pi/n\right)  ,$ except at $t=\left(
2k-1\right)  \pi/n$ where $\alpha_{\beta}^{\prime}\left(  t\right)  $ is
undefined. Moreover the magnitude of $\alpha_{\beta}^{\prime}\left(  t\right)
$, which is strictly non-zero, is%
\begin{equation}
\left\vert \alpha_{\beta}^{\prime}\left(  t\right)  \right\vert =\sqrt{2}%
\sqrt{1\pm\sin\left(  \beta\right)  }/\left\vert \sqrt{1-e^{i2nt}}\right\vert
. \label{mag}%
\end{equation}
Here the $\sin$ term is subtracted on the first half of the interval $\left(
\left(  2k-2\right)  \pi/n,2k\pi/n\right)  ,$ and added on the second
half.\newline(ii) When $\beta=\pi/2,$ on the first half of the interval
$\left(  \left(  2k-2\right)  \pi/n,2k\pi/n\right)  ,$ $\alpha_{\pi/2}$
satisfies (\ref{compass})$,$ while $\alpha_{\pi/2}^{\prime}\left(  t\right)  $
is strictly non-zero there with $\left\vert \alpha_{\pi/2}^{\prime}\left(
t\right)  \right\vert =2/\left\vert \sqrt{1-e^{2int}}\right\vert $. On the
second half of the interval $\left(  \left(  2k-2\right)  \pi/n,2k\pi
/n\right)  ,$ $\alpha_{\pi/2}^{\prime}\left(  t\right)  =0$ and $\alpha
_{\pi/2}\left(  t\right)  $ is constant.
\end{theorem}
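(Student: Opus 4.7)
The plan is to compute $\alpha_\beta'(t)$ term by term using Proposition \ref{linearangles} and then apply the cosine rule from Remark \ref{cosine} to read off both magnitude and argument. The key observation, which makes the whole argument clean, is that the two summands of $\alpha_\beta'(t)$ have equal magnitude and a phase difference that is constant on each sub-interval $((j-1)\pi/n,\,j\pi/n)$, so Remark \ref{cosine} applies verbatim.

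First I would differentiate
\[
\alpha_\beta(t) = e^{i\beta/2}h_n(e^{it}) + e^{-i\beta/2}\,\overline{g_n(e^{it})}
\]
to obtain
\[
\alpha_\beta'(t) = e^{i\beta/2}\,\tfrac{d}{dt}h_n(e^{it}) + e^{-i\beta/2}\,\overline{\tfrac{d}{dt}g_n(e^{it})}.
\]
By Proposition \ref{linearangles}(i) both summands have common magnitude $1/|\sqrt{1-e^{i2nt}}|$. Their arguments, read off from (\ref{argghh}) and (\ref{argghg}) (the latter negated by conjugation) and then shifted by $\pm\beta/2$, are affine in $t$ with common slope $-(n/2-1)$. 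Their mean is $-(n/2-1)t$, which coincides with (\ref{compass}) modulo $\pi$, and their constant difference on $((j-1)\pi/n,\,j\pi/n)$ works out to $\beta+3\pi/2+(j-1)\pi$, i.e.\ $\beta+3\pi/2\pmod{2\pi}$ for odd $j$ and $\beta+\pi/2\pmod{2\pi}$ for even $j$.

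Feeding this constant phase difference into Remark \ref{cosine} yields a magnitude of the form $\sqrt{2}\sqrt{1\pm\sin\beta}/|\sqrt{1-e^{i2nt}}|$, since $\cos(\beta+3\pi/2)=\sin\beta$ and $\cos(\beta+\pi/2)=-\sin\beta$; matching the parity of $j$ to the two halves of $((2k-2)\pi/n,\,2k\pi/n)$ then pins down the sign. For $|\beta|<\pi/2$ both factors $1\pm\sin\beta$ are strictly positive, so $\alpha_\beta'$ is strictly nonzero, establishing (i). For $\beta=\pi/2$, one of the two factors vanishes on its corresponding half-interval, so $\alpha_{\pi/2}'$ is identically zero there and $\alpha_{\pi/2}$ is constant (the arcs of constancy); on the other half the same calculation still applies and yields both (\ref{compass}) and the stated non-zero magnitude, giving (ii). The undefinedness of $\alpha_\beta'$ at $t=(2k-1)\pi/n$ is inherited from Proposition \ref{analco}, since $1/|\sqrt{1-e^{i2nt}}|\to\infty$ as $t$ approaches the $2n$-th roots of unity.

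The main obstacle is bookkeeping: tracking which parity of $j$ corresponds to which half of $((2k-2)\pi/n,\,2k\pi/n)$, and hence which sign appears in $1\pm\sin\beta$, and then verifying that the argument of $\alpha_\beta'(t)$, which is $(\theta_1+\theta_2)/2=-(n/2-1)t$ whenever the sum is nonzero, is congruent to $k\pi-(n/2-1)t$ modulo $\pi$ on the relevant sub-intervals. Once these sign/parity correspondences are fixed, every remaining step is a direct application of formulas already in Proposition \ref{linearangles} together with the cosine identity of Remark \ref{cosine}.
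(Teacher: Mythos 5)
Your overall strategy is the same as the paper's: write $\alpha_\beta'(t)$ as the sum of $e^{i\beta/2}\frac{d}{dt}h_n(e^{it})$ and $e^{-i\beta/2}\overline{\frac{d}{dt}g_n(e^{it})}$, observe via Proposition \ref{linearangles} that these have equal magnitude $1/\left\vert\sqrt{1-e^{i2nt}}\right\vert$ and a phase difference that is constant on each $\left((j-1)\pi/n,\,j\pi/n\right)$, and then read off the magnitude from Remark \ref{cosine}. That part of your argument is sound and matches the paper.

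The gap is in the argument of $\alpha_\beta'$. You assert that it equals $(\theta_1+\theta_2)/2=-(n/2-1)t$ and that it suffices to check agreement with (\ref{compass}) \emph{modulo} $\pi$. But modulo $\pi$ the identity (\ref{compass}) is just (\ref{HSarg}), which holds automatically for any harmonic map with dilatation $z^{n-2}$ and carries no new information; the entire content of part (i) is the exact branch, i.e.\ the additive constant $k\pi$. That constant is what Lemma \ref{cusps} uses to produce a jump of exactly $\pi$ at $t=2k\pi/n$ (a cusp) and a jump of exactly $0$ at $t=(2k-1)\pi/n$ (a removable node); if the branch were off by $\pi$ on alternate subintervals these conclusions would be reversed. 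Moreover the mean-of-arguments rule you invoke is only valid when the two chosen branches differ by less than $\pi$; here the natural branches from (\ref{argghh}) and the negation of (\ref{argghg}), shifted by $\pm\beta/2$, differ by $3\pi/2+\beta\in(\pi,2\pi)$, so the argument of the sum is $(\theta_1+\theta_2)/2+\pi=\pi-(n/2-1)t$ on $(0,2\pi/n)$, not $-(n/2-1)t$. This is precisely why the paper devotes most of its proof to determining whether the initial argument $\arg\alpha_\beta'(0)^{+}$ is $\pi$ or $0$ (by locating the two summands in the second and third quadrants), and why it remarks that the conclusion would fail for $\left\vert\beta\right\vert\in(\pi/2,\pi)$: the branch determination genuinely depends on $\beta$ and cannot be dismissed as bookkeeping. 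A secondary, related issue is the sign assignment in (\ref{mag}): your computation gives $1+\sin\beta$ on the odd-$j$ (first) half and $1-\sin\beta$ on the even-$j$ (second) half, and you leave the final matching implicit; since this determines which half-interval degenerates when $\beta=\pi/2$, it needs to be pinned down explicitly rather than deferred.
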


\begin{proof}
Note that the summands $\frac{d}{dt}e^{i\beta/2}h_{n}\left(  e^{it}\right)  $
and $\frac{d}{dt}e^{-i\beta/2}g_{n}\left(  e^{-it}\right)  $ of $\alpha
_{\beta}^{\prime}\left(  t\right)  =\frac{d}{dt}\arg f_{\beta}\left(
e^{it}\right)  $ have the same magnitude, namely $1/\left\vert \sqrt
{1-e^{2int}}\right\vert $. \ From equation (\ref{hgdiff}), the angle between
$\frac{d}{dt}h_{n}\left(  e^{it}\right)  $ and $\frac{d}{dt}\overline{g_{n}%
}\left(  e^{it}\right)  $ is the constant $\left(  -1\right)  ^{j}\pi/2$ on
each interval $\left(  \left(  j-1\right)  \pi/n,j\pi/n\right)  ,$ and the
presence of $\beta$ changes this difference to $\beta+\left(  -1\right)
^{j}\pi/2.$ Thus from the cosine rule (see Remark \ref{cosine}) we obtain the
magnitude $\left\vert \alpha_{\beta}^{\prime}\left(  t\right)  \right\vert
=\frac{\sqrt{2}\sqrt{1+\cos\left(  \beta+\left(  -1\right)  ^{j}\pi/2\right)
}}{\left\vert \sqrt{1-e^{i2nt}}\right\vert }=\frac{\sqrt{2}\sqrt{1+\left(
-1\right)  ^{j}\sin\left(  \beta\right)  }}{\left\vert \sqrt{1-e^{i2nt}%
}\right\vert },$ for $t\neq j\pi/n.$ This proves equation (\ref{mag}). Note
that since $\left\vert \beta\right\vert <\pi/2$, $\alpha_{\beta}^{\prime
}\left(  t\right)  \neq0.$ We turn to the argument of $\alpha_{\beta}^{\prime
}.$ We can utilize arithmetic means involving (\ref{argghh}) and
(\ref{argghg}), \ or simply make use of (\ref{HSarg}). For either approach,
the initial argument $\arg\alpha_{\beta}^{\prime}\left(  0\right)  ^{+}$ must
be determined as either $\pi$ or $0.$ The initial values of $\arg\frac{d}%
{dt}h_{n}\left(  e^{it}\right)  $ and $\arg\frac{d}{dt}\overline{g_{n}}\left(
e^{it}\right)  $ as $t\searrow0^{+}$ are $3\pi/4$ and $-3\pi/4$
respectively$.$ Therefore the initial angle $\arg\left(  \alpha_{0}^{\prime
}\left(  0\right)  ^{+}\right)  $ is $\pi$ rather than $0,$ and $\arg
\alpha_{0}^{\prime}\left(  t\right)  =\pi-\left(  n/2-1\right)  t.$ This
formula for $\arg\alpha_{0}^{\prime}\left(  t\right)  $ holds throughout
$\left(  0,\pi/n\right)  ,$ since $\alpha_{\beta}\left(  t\right)  $ is
continuous$.$ The initial angles $\frac{d}{dt}h_{n}\left(  e^{it}\right)  $
and $\frac{d}{dt}\overline{g_{n}}\left(  e^{it}\right)  $ as $t\searrow0^{+}$
on $\left(  \pi/n,2\pi/n\right)  $ become $3\pi/4+\pi/n$ and $\pi/4+\pi/n$
respectively (using Proposition \ref{linearangles}, or rotational symmetry).
The initial value of $\arg\alpha_{0}^{\prime}\left(  t\right)  $ on $\left(
\pi/n,2\pi/n\right)  $ is therefore $\pi/2+\pi/n,$ consistent with
(\ref{compass}). Thus the formula $\arg\alpha_{0}^{\prime}\left(  t\right)
=\pi-\left(  n/2-1\right)  t$ holds throughout $\left(  0,2\pi/n\right)  $,
where defined. For \linebreak$\left\vert \beta\right\vert <\pi/2,$ the means
of $\arg\frac{d}{dt}e^{i\beta/2}h_{n}\left(  e^{it}\right)  $ and $\arg
\frac{d}{dt}e^{-i\beta/2}\overline{g_{n}}\left(  e^{it}\right)  $ are the same
as for $\beta=0$: the initial angles $\arg\frac{d}{dt}e^{i\beta/2}h_{n}\left(
e^{it}\right)  $ and $\arg\frac{d}{dt}e^{-i\beta/2}\overline{g_{n}}\left(
e^{it}\right)  $ as $t\searrow0^{+}$ on $\left(  0,\pi/n\right)  $ are
respectively $3\pi/4+\beta/2$ (third quadrant) and $-3\pi/4-\beta/2$ (fourth
quadrant)$.$ Thus the initial angle $\arg\left(  \alpha_{\beta}^{\prime
}\left(  0\right)  ^{+}\right)  $ maintains the value $\pi$ (rather than
$0$)$.$ Similarly the initial angles $\arg\frac{d}{dt}e^{i\beta/2}h_{n}\left(
e^{it}\right)  $ and $\arg\frac{d}{dt}e^{-i\beta/2}\overline{g_{n}}\left(
e^{it}\right)  /2$ as $t\searrow0^{+}$ on $\left(  \pi/n,2\pi/n\right)  $ are
respectively $3\pi/4+\pi/n+\beta/2$ and $\pi/4+\pi/n-\beta/2.$ Thus the
initial angle $\arg\left(  \alpha_{\beta}^{\prime}\left(  \pi/n\right)
^{+}\right)  $ also remains fixed as $\pi/2+\pi/n.$ We conclude that the
equation for $\arg\alpha_{0}^{\prime}\left(  t\right)  $ is valid for
$\arg\alpha_{\beta}^{\prime}\left(  t\right)  $ on $\left(  0,2\pi/n\right)  $
(note this would \textit{not} be the case for $\left\vert \beta\right\vert
\in\left(  \pi/2,\pi\right)  $). Thus equation (\ref{compass}) holds for
$k=1,$ with $\alpha_{\beta}$ in place of $\alpha,$ except at $t=\pi/n$ where
$\arg\alpha_{\beta}^{\prime}\left(  t\right)  $ is not defined$.$ By Theorem
\ref{symmetry} (ii), $f_{\beta}\ $has $n$-fold rotational symmetry needed to
invoke Lemma \ref{cusps} (ii), and in view of Remark \ref{extend}, formula
(\ref{compass}) holds on each interval $\left(  \left(  2k-2\right)
\pi/n,2k\pi/n\right)  ,$ for $t\neq\left(  2k-1\right)  \pi/n.$ This proves (i).

We now consider (ii), with $\beta=\pi/2$. From Proposition \ref{linearangles}
(iii), the angle between $\frac{d}{dt}e^{i\pi/4}h_{n}\left(  e^{it}\right)  $
and $\arg\frac{d}{dt}e^{-i\pi/4}\overline{g_{n}}\left(  e^{it}\right)  $ is
$\beta+\left(  -1\right)  ^{j}\pi/2$ (mod $2\pi$), which is either $0$ or
$\pi.$ For even $j,$ we see that $\frac{d}{dt}e^{i\pi/4}h_{n}\left(
e^{it}\right)  $ and $\frac{d}{dt}e^{-i\pi/4}\overline{g_{n}}\left(
e^{it}\right)  $ cancel, since their arguments differ by $\pi.$ Then
$\alpha_{\beta}^{\prime}\left(  t\right)  $ $=0,$ and $\alpha_{\beta}$ is
constant on $\left(  \left(  j-1\right)  \pi/n,j\pi/n\right)  .$ For odd $j,$
the two summands have the same argument, so $\arg\alpha_{\pi/2}^{\prime
}\left(  t\right)  =\arg\frac{d}{dt}e^{i\pi/4}h_{n}\left(  e^{it}\right)  .$
Adding $\pi/4$ to formula (\ref{argghh}), we obtain equation $\arg\alpha
_{\pi/2}^{\prime}\left(  t\right)  =\pi-\left(  \frac{n}{2}-1\right)
t+\left(  j-1\right)  \pi/2$ on $\left(  \left(  j-1\right)  \pi
/n,j\pi/n\right)  .$ But this subinterval is the \textquotedblleft first half"
of the interval $\left(  \left(  2k-2\right)  \pi/n,2k\pi/n\right)  $ where
$j=2k-1,$ so replacing $j$ in our formula for $\arg\alpha_{\pi/2}^{\prime
}\left(  t\right)  $ we obtain $\pi-\left(  \frac{n}{2}-1\right)  t+\left(
2k-2\right)  \pi/2$, which is (\ref{compass}). Moreover the non-zero magnitude
of $\alpha_{\pi/2}^{\prime}\left(  t\right)  $ is $\left\vert \alpha_{\pi
/2}^{\prime}\left(  t\right)  \right\vert =2/\left\vert \sqrt{1-e^{2int}%
}\right\vert .$
\end{proof}

\begin{corollary}
\label{cuspdir}For $\beta\in\left(  -\pi/2,\pi/2\right)  $, let $\alpha
_{\beta}\left(  t\right)  =f_{\beta}\left(  e^{it}\right)  ,$ and
$k=1,2,...,n. $ Then the singular points\ of $\alpha_{\beta}$ occurring at
multiples of $\pi/n\ $are alternately cusps, and removable nodes. At
$t=\left(  2k-1\right)  \pi/n$, the discontinuity in $\arg\alpha_{\beta
}^{\prime}$ is removable, and $\alpha_{\beta}\left(  \left(  2k-1\right)
\pi/n\right)  $ is a removable node. In traversing from the cusp
$\alpha_{\beta}\left(  \left(  2k-2\right)  \pi/n\right)  $ to the cusp
$\alpha_{\beta}\left(  2k\pi/n\right)  ,$ $\alpha_{\beta}$ has total curvature
$\pi-2\pi/n.$ The total curvature over the first half of the interval, is
equal to the total curvature over the second half of the interval $\left(
\left(  2k-2\right)  \pi/n,2k\pi/n\right)  ,$ and is $\pi/2-\pi/n$.
\end{corollary}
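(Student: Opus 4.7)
The plan is to read off everything from Theorem \ref{fbdry}(i) and Lemma \ref{cusps}. For $\beta\in(-\pi/2,\pi/2)$, Theorem \ref{fbdry}(i) tells us that $\alpha_{\beta}^{\prime}$ exists, is continuous, and is nonzero on $\partial U$ except at the $2n$ multiples of $\pi/n$, and that formula \eqref{compass} holds on each interval $\left((2k-2)\pi/n,\,2k\pi/n\right)$ save at the midpoint $t=(2k-1)\pi/n$. Thus every multiple of $\pi/n$ is an isolated singular point in the sense of Definition \ref{cusp}.

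First I would dispatch the cusps at the \emph{even} multiples. Since \eqref{compass} holds in a punctured neighborhood of $t=2k\pi/n$, Lemma \ref{cusps}(i) applies directly (with the mild extension allowed by Remark \ref{extend}) and yields that $\alpha_{\beta}(2k\pi/n)$ is a cusp whose axis has argument $2k\pi/n$. For the \emph{odd} multiples $t=(2k-1)\pi/n$, I would argue that the discontinuity of $\arg\alpha_{\beta}^{\prime}$ is removable by computing the one-sided limits directly from \eqref{compass}: on both sides the same linear formula $\arg\alpha_{\beta}^{\prime}(t)=k\pi-(n/2-1)t$ is valid, so
\[
\arg\alpha_{\beta}^{\prime}\bigl((2k-1)\pi/n\bigr)^{-}=\arg\alpha_{\beta}^{\prime}\bigl((2k-1)\pi/n\bigr)^{+}=k\pi-\tfrac{n-2}{2n}(2k-1)\pi.
\]
The exterior angle is therefore $0$, so by Definition \ref{cusp} the point is a removable node. (The singular character comes only from $|\alpha_{\beta}^{\prime}|\to\infty$ via the factor $1/\lvert\sqrt{1-e^{i2nt}}\rvert$ in \eqref{mag}; the direction extends continuously across $(2k-1)\pi/n$.)

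For the total curvature, since $\arg\alpha_{\beta}^{\prime}(t)$ is affine in $t$ with slope $-(n/2-1)$ on the whole interval $\left((2k-2)\pi/n,\,2k\pi/n\right)$ (after absorbing the removable discontinuity at the midpoint), the change in argument from one cusp to the next is
\[
\Bigl|-(n/2-1)\cdot\tfrac{2\pi}{n}\Bigr|=\pi-\tfrac{2\pi}{n},
\]
and on each half-interval of length $\pi/n$ the change is $\pi/2-\pi/n$.

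I do not anticipate a substantial obstacle: the work is all already encoded in \eqref{compass} and the magnitude bound \eqref{mag}. The only point requiring a little care is making sure Definition \ref{cusp} really applies at the odd multiples, i.e., that $\alpha_{\beta}^{\prime}$ is genuinely defined and nonzero in a punctured neighborhood of $(2k-1)\pi/n$ (so that one-sided limits of $\arg\alpha_{\beta}^{\prime}$ make sense); this is immediate from Theorem \ref{fbdry}(i), since the neighboring singular points are a distance $\pi/n$ away.
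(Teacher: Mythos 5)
Your proposal is correct and follows essentially the same route as the paper: invoke Theorem \ref{fbdry}(i) to get formula \eqref{compass} on the punctured intervals, apply Lemma \ref{cusps} (with Remark \ref{extend}) at the even multiples, compute the coinciding one-sided limits of $\arg\alpha_{\beta}^{\prime}$ at the odd multiples to get exterior angle $0$, and read the total curvature off the linearity of \eqref{compass}. Your computed limit $k\pi-\tfrac{n-2}{2n}(2k-1)\pi$ agrees with the paper's value $\pi/2+(2k-1)\pi/n$, so nothing is missing.
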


\begin{proof}
As noted in the proof of (i) above, Lemma \ref{cusps} still applies with
(\ref{compass}) holding on $\left(  \left(  2k-2\right)  \pi/n,2k\pi/n\right)
$ except at the center $t=\left(  2k-1\right)  \pi/n$ of the interval$.$ We
therefore use (\ref{compass}) on the punctured interval to evaluate the
limits
\begin{equation}
\arg\alpha_{\beta}^{\prime}\left(  \left(  2k-1\right)  \pi/n\right)
^{-}=\arg\alpha_{\beta}^{\prime}\left(  \left(  2k-1\right)  \pi/n\right)
^{+}=\pi/2+\left(  2k-1\right)  \pi/n, \label{nodedir}%
\end{equation}
so the exterior angle is $0$ and $\alpha_{\beta}\left(  \left(  2k-1\right)
\pi/n\right)  $ is a removable node. We note that the discontinuity in
$\arg\alpha_{\beta}^{\prime}$ at $\left(  2k-1\right)  \pi/n$ is removable.
Again by Lemma \ref{cusps}, $\alpha_{\beta}\left(  2k\pi/n\right)  $ is a cusp
and the axis has argument $2k\pi/n.$ The equation (\ref{compass}) is monotonic
in $t$, so the total change in $\arg\alpha_{\beta}^{\prime}$ is equal to
$\pi-2\pi/n$ on the interval $\left(  \left(  2k-2\right)  \pi/n,2k\pi
/n\right)  .$ Since (\ref{compass}) is linear, half of this change, namely
$\pi/2-\pi/n,$ occurs on each half of the interval $\left(  \left(
2k-2\right)  \pi/n,2k\pi/n\right)  .$
\end{proof}

%

\begin{figure}[h]%
\centering
\includegraphics[
height=2.226in,
width=4.3742in
]%
{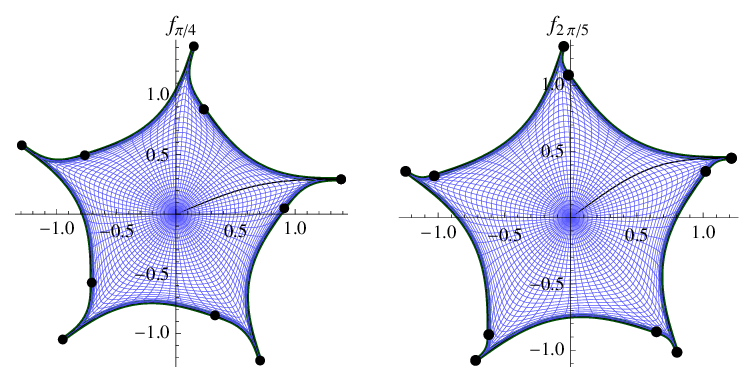}%
\caption{For $n=5$ with $\beta=\pi/4$ (left) and $\beta=2\pi/5$ (right), the
nodes and cusps are indicated by a dots, and interlace with one another by
argument. The argument of each node and cusp increases with $\beta$ by
equation (\ref{cnarg}). Node and cusp locations are as described $\left\vert
\beta\right\vert <\pi/2$ in Theorem \ref{cuspsnodes}. }%
\label{cuspnodepic5}%
\end{figure}
By Corollary \ref{cuspdir}, the rosette harmonic mappings $f_{\beta}$ for
$\left\vert \beta\right\vert <\pi/2$ have $n$-cusps, just as for the
$n$-cusped hypocycloid mappings. Moreover with $n$ fixed, corresponding cusps
for different mappings have cusp axes that are parallel. This can be seen in
Figure \ref{cuspnodepic5} and in Figure \ref{five} where cusps $\alpha_{\beta
}\left(  0\right)  $ have axes parallel to the real axis$.$ The parallelism of
axes follows from the identical unit tangent values of the boundary
extensions, which also explains the total curvature of $\pi-2\pi/n$ from one
cusp to the next, described both in Proposition \ref{hypocycloid} and
Corollary \ref{cuspdir}.

The last graph $f_{\pi/2}\left(  U\right)  $ in Figure \ref{five} does not
have cusps, but nodes with an acute interior angle, which we now examine.

\begin{corollary}
\label{nodes}Let $\alpha_{\pi/2}\left(  t\right)  =f_{\pi/2}\left(
e^{it}\right)  ,$ and let $k=1,2,...,n.$ On the first half of the interval
$\left(  \left(  2k-2\right)  \pi/n,2k\pi/n\right)  ,\ $the total curvature of
$\alpha_{\pi/2}$ is $\pi/2-\pi/n,$ while $\alpha_{\pi/2}$ is constant
otherwise$.$ There is a piecewise smooth parametrization $\tilde{\alpha}%
_{\pi/2},$ with the same graph as $\alpha_{\pi/2}$ over $\partial U$, and just
$n$ singular points at $\tilde{\alpha}_{\pi/2}\left(  2k\pi/n\right)
=\alpha_{\pi/2}\left(  2k\pi/n\right)  $ which are nodes of $\tilde{\alpha
}_{\pi/2}$ with interior angle $\pi/2-\pi/n.$
\end{corollary}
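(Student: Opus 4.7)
The plan is to build directly on Theorem \ref{fbdry}(ii), which already says that on each interval $((2k-2)\pi/n,2k\pi/n)$ the derivative $\alpha'_{\pi/2}$ satisfies (\ref{compass}) on the first half $((2k-2)\pi/n,(2k-1)\pi/n)$ and vanishes on the second half $((2k-1)\pi/n,2k\pi/n)$. Since (\ref{compass}) expresses $\arg\alpha'_{\pi/2}(t)$ as a linear function of $t$ with slope $-(n/2-1)$, the total curvature over the first half, an interval of length $\pi/n$, is $(n/2-1)(\pi/n)=\pi/2-\pi/n$. On the second half $\alpha_{\pi/2}$ is constant, and this proves the first assertion.

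To produce $\tilde\alpha_{\pi/2}$ with a single singular point per interval, I would simply reparametrize time linearly on each $((2k-2)\pi/n,2k\pi/n)$ so that the original moving half is stretched across the entire interval: set $\tilde\alpha_{\pi/2}(s)=\alpha_{\pi/2}\bigl((2k-2)\pi/n+(s-(2k-2)\pi/n)/2\bigr)$. The constant portion contributes only a single point to the image, so the graph is preserved. On each open interval the derivative is a positive real multiple of $\alpha'_{\pi/2}$ (hence nonzero, continuous, and with the same argument), so $\tilde\alpha_{\pi/2}$ is smooth there and the only candidates for singular points are the join points $s=2k\pi/n$.

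To identify these as nodes with the claimed interior angle, I would evaluate the one-sided limits of $\arg\tilde\alpha'_{\pi/2}$ at $s=2k\pi/n$ using (\ref{compass}). On the left, $s\nearrow 2k\pi/n$ corresponds to $t\nearrow (2k-1)\pi/n$ on the $k$-th interval, yielding $k\pi-(n/2-1)(2k-1)\pi/n$. On the right, $s\searrow 2k\pi/n$ corresponds to $t\searrow 2k\pi/n$ on the $(k+1)$-th interval, yielding $(k+1)\pi-(n/2-1)(2k\pi/n)$. The difference collapses to $\pi-(n/2-1)(\pi/n)=\pi/2+\pi/n$, which is the exterior angle; by Definition \ref{cusp}, the interior angle is $\pi/2-\pi/n$, as required.

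There is no substantial obstacle: Theorem \ref{fbdry}(ii) already contains all the analytic work. The only care needed is bookkeeping the one-sided limits at $s=2k\pi/n$ on the correct interval (index $k$ on the left, $k+1$ on the right), and verifying that compressing away the constant second-halves genuinely removes the midpoint singularities at $(2k-1)\pi/n$ that appeared in the original parametrization.
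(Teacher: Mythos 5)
Your proposal is correct and follows essentially the same route as the paper: the reparametrization $\tilde\alpha_{\pi/2}(s)=\alpha_{\pi/2}\bigl((2k-2)\pi/n+(s-(2k-2)\pi/n)/2\bigr)$ is algebraically identical to the paper's $\alpha_{\pi/2}\left((k-1)\pi/n+t/2\right)$, and your one-sided limits of $\arg\tilde\alpha'_{\pi/2}$ at $s=2k\pi/n$ (using (\ref{compass}) on the $k$-th interval from the left and the $(k+1)$-th from the right) reproduce the paper's computation of the exterior angle $\pi/2+\pi/n$ and hence the interior angle $\pi/2-\pi/n$. The total curvature computation via the linear slope $-(n/2-1)$ over a half-interval of length $\pi/n$ is also the argument implicit in the paper (cf.\ Corollary \ref{cuspdir}).
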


\begin{proof}
Since $\alpha_{\pi/2}^{\prime}\left(  t\right)  =0$ on $\left(  \left(
2k-1\right)  \pi/n,2k\pi/n\right)  ,$ the singularities are not isolated, and
moreover these intervals are arcs of constancy for the boundary function of
$f_{\pi/2}$. We define $\tilde{\alpha}_{\pi/2},$ defined piecewise on
$[0,2\pi)$ by%
\begin{equation}
\tilde{\alpha}_{\pi/2}\left(  t\right)  =\alpha_{\pi/2}\left(  \left(
k-1\right)  \pi/n+t/2\right)  ,\text{ }t\in\lbrack\left(  2k-2\right)
\pi/n,2k\pi/n). \label{halfspeed}%
\end{equation}

Then on each interval in (\ref{halfspeed}) the curve $\tilde{\alpha}_{\pi/2} $
has the values\newline$\alpha_{\pi/2}\left(  \left[  \left(  2k-2\right)
\pi/n,\left(  2k-1\right)  \pi/n\right]  \right)  $. Moreover, $\tilde{\alpha
}_{\pi/2}\left(  2k\pi/n\right)  =\alpha_{\pi/2}\left(  2k\pi/n\right)  .$
Thus $\tilde{\alpha}_{\pi/2}$ is also continuous, with the same image as
$\alpha_{\pi/2}$ on $\left[  0,2\pi\right]  .$ We compute
\begin{align*}
\lim_{t\rightarrow2k\pi/n^{-}}\arg\tilde{\alpha}_{\pi/2}^{\prime}\left(
t\right)   &  =\lim_{t\rightarrow2k\pi/n^{-}}\arg\alpha_{\pi/2}^{\prime
}\left(  \left(  k-1\right)  \pi/n+t/2\right) \\
&  =\lim_{t^{\prime}\rightarrow\left(  2k-1\right)  \pi/n}2k\pi/2-\left(
\frac{n}{2}-1\right)  t^{\prime}=k\pi-\left(  \frac{n}{2}-1\right)  \left(
2k-1\right)  \frac{\pi}{n}.
\end{align*}
and using the formula for $\arg\tilde{\alpha}_{\pi/2}^{\prime}$ on $\left(
2k\pi/n,2\left(  k+1\right)  \pi/n\right)  $ obtain
\begin{align*}
\lim_{t\rightarrow2k\pi/n^{+}}\arg\tilde{\alpha}_{\pi/2}^{\prime}\left(
t\right)   &  =\lim_{t\rightarrow2k\pi/n^{+}}\arg\alpha_{\pi/2}^{\prime
}\left(  k\pi/n+t/2\right) \\
&  =\lim_{t^{\prime}\rightarrow2k\pi/n}\left(  2k+2\right)  \pi/2-\left(
\frac{n}{2}-1\right)  t^{\prime}\\
&  =\left(  k+1\right)  \pi-\left(  \frac{n}{2}-1\right)  2k\frac{\pi}{n}.
\end{align*}
The difference $\arg\tilde{\alpha}_{\pi/2}^{\prime}\left(  2k\pi/n\right)
^{+}-\arg\tilde{\alpha}_{\pi/2}^{\prime}\left(  2k\pi/n\right)  ^{-}$ is thus
$\pi-\left(  \frac{n}{2}-1\right)  \pi/n=\pi/2+\pi/n,$ so we have a node with
interior angle $\pi/2-\pi/n.$
\end{proof}

\begin{remark}
\label{node}The node $\tilde{\alpha}_{\pi/2}\left(  2k\pi/n\right)  $ can be
written $e^{i2k\pi/n}\alpha_{\pi/2}\left(  t\right)  $ for any \newline%
$t\in\left[  \left(  2k-1\right)  \pi/n,2k\pi/n\right]  ,$ where $\alpha
_{\pi/2}$ is constant$.$ We write the nodes of $\tilde{\alpha}_{\pi/2}$ as
\newline$e^{i2k\pi/n}f_{\pi/2}\left(  1\right)  $ when convenient$,$ and refer
to nodes of $\tilde{\alpha}_{\pi/2}$ as the nodes of $f_{\pi/2}.$
\end{remark}

\begin{example}
When $\beta=\pi/2$ and with $n=5,$ then on intervals $\left(  0,\pi/5\right)
,$ $\left(  2\pi/5,3\pi/5\right)  ,$ ... the boundary arcs $e^{i\pi/4}%
h_{5}\left(  e^{it}\right)  $ and $e^{-i\pi/4}\overline{g_{5}}\left(
e^{-it}\right)  $ are translates of one another, and on intervals $\left(
\pi/5,2\pi/5\right)  ,$ $\left(  3\pi/5,5\pi/5\right)  ,$ ..., the boundary
arcs are mirrors of one another. Figure \ref{transmirror} (right) shows the
arc of constancy $\left(  \pi/5,2\pi/5\right)  $ on which $\ e^{i\pi/4}%
h_{5}\left(  e^{it}\right)  $ and $e^{-i\pi/4}\overline{g_{5}}\left(
e^{-it}\right)  $ are mirror images, and where $f_{\pi/2}\left(
e^{it}\right)  $ is equal to the node $e^{i2\pi/5}f_{\pi/2}\left(  1\right)  $
(indicated with a larger dot in the second quadrant).
\begin{figure}[ptb]%
\centering
\includegraphics[
height=2.4059in,
width=5.0548in
]%
{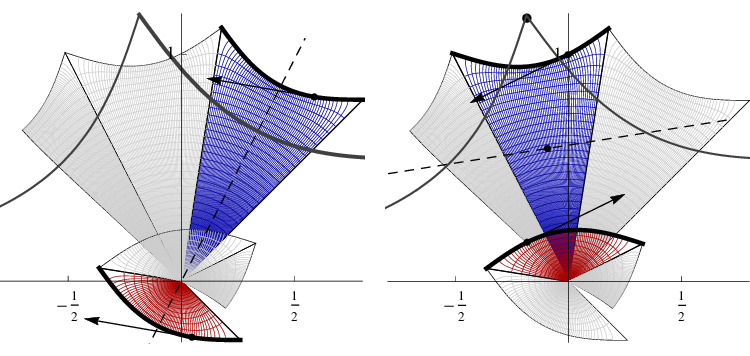}%
\caption{Images of sectors in $U$ of $e^{i\pi/4}h_{5}$ and $e^{-i\pi
/4}\overline{g_{5}}$ with $\arg z\in\left(  0,\pi/5\right)  $ shaded (left)
and $\arg z\in\left(  \pi/5,2\pi/5\right)  $ shaded (right), for $n=5.$ The
bounding curves (thickened)$,$ are translates (left) and reflections (right).
A tangent is indicated for $h_{5}\left(  e^{it}\right)  $ and $\overline
{g_{5}\left(  e^{it}\right)  }$ in each case, along with a portion of
$f_{\pi/2}\left(  e^{it}\right)  $, which is constant (right) on $\left(
\pi/5,2\pi/5\right)  .$}%
\label{transmirror}%
\end{figure}

\end{example}

We now complete our description of the symmetries within the graphs of the
rosette mappings.

\begin{corollary}
\label{noreflect}Let $\beta\in%
\mathbb{R}
$ and $n\geq3.$ If $\beta$ is not a multiple of $\pi/2,$ then the image set
$f_{\beta}\left(  U\right)  $ does not have reflectional symmetry. In this
case, $f_{\beta}\left(  U\right)  $ has symmetry group $C_{n}.$ Otherwise,
$\beta$ is a multiple of $\pi/2,$ and $f_{\beta}\left(  U\right)  $ has
symmetry group $D_{n}.$
\end{corollary}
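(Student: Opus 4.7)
The plan is to reduce to $\beta \in (-\pi/2, \pi/2]$ and then distinguish the multiples-of-$\pi/2$ case from the rest.

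By Corollary \ref{transit}, $f_{\beta+l\pi}(U)$ is a rotation of $f_\beta(U)$, so their symmetry groups are conjugate (hence isomorphic) subgroups of the Euclidean group; moreover $\beta$ is a multiple of $\pi/2$ if and only if $\beta+l\pi$ is. It therefore suffices to treat $\beta \in (-\pi/2, \pi/2]$.

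Within this range I would first show that the rotational subgroup of $\mathrm{Sym}(f_\beta(U))$ is exactly $C_n$. Theorem \ref{symmetry}(ii) gives $C_n \subseteq \mathrm{Sym}(f_\beta(U))$. For the reverse containment, the boundary of $f_\beta(U)$ carries exactly $n$ intrinsically distinguished points arranged at the vertices of a regular $n$-gon about $0$: cusps when $|\beta|<\pi/2$ (Corollary \ref{cuspdir}) and nodes when $\beta=\pi/2$ (Corollary \ref{nodes}). Any isometry preserving $f_\beta(U)$ must permute these points, and any rotation permuting the vertices of a regular $n$-gon lies in $C_n$. Hence the full symmetry group is either $C_n$ or $D_n$.

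When $\beta = 0$ or $\beta = \pi/2$, Theorem \ref{symmetry}(iii) supplies an explicit reflective symmetry, so the group is $D_n$. For $\beta \in (-\pi/2, 0) \cup (0, \pi/2)$ I would rule out reflective symmetry by the following argument. Since the $n$ cusp positions have centroid $0$, any reflective symmetry is through a line $L_\theta$ through the origin at some angle $\theta$. Writing $f_\beta(1) = re^{i\psi}$ with $\psi$ as in Lemma \ref{convexconcave}(i), and using that the cusp at $f_\beta(1)$ has axis direction $0$ (Corollary \ref{cuspdir}), the reflection through $L_\theta$ would send this cusp to some cusp at argument $\psi + 2k\pi/n$ with axis direction $2k\pi/n$. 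This forces the two conditions
\begin{align*}
2\theta - \psi &\equiv \psi + 2k\pi/n \pmod{2\pi},\\
2\theta &\equiv 2k\pi/n \pmod{\pi}.
\end{align*}
Subtracting, one obtains $2\psi \equiv 0 \pmod{\pi}$, i.e., $\psi$ is an integer multiple of $\pi/2$. But Lemma \ref{convexconcave}(i) yields $\tan\psi = c \tan(\beta/2)$ with constant $c = (1-\tan(\pi/(2n)))/(1+\tan(\pi/(2n))) \neq 0$, and for $\beta \in (-\pi/2, \pi/2) \setminus \{0\}$ the right-hand side is finite and nonzero. Hence $\psi \not\equiv 0 \pmod{\pi/2}$, a contradiction, so no reflective symmetry exists and the symmetry group is $C_n$.

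The main obstacle is extracting the two compatibility conditions on $\theta$ from a hypothetical reflection and combining the positional data of Lemma \ref{convexconcave}(i) with the tangent/axis data of Corollary \ref{cuspdir}; once these are in hand, the explicit formula for $\tan\psi$ closes the argument.
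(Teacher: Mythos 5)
Your proposal is correct and follows essentially the same route as the paper: reduce to $\beta\in(-\pi/2,\pi/2]$ via Corollary \ref{transit}, cap the rotation group using the $n$ cusps (or nodes), invoke Theorem \ref{symmetry}(iii) for the dihedral cases, and rule out reflections for the remaining $\beta$ by exploiting the nonzero angle $\psi$ between the radial ray to the cusp $f_{\beta}(1)$ and its axis from Lemma \ref{convexconcave}(i) and Corollary \ref{cuspdir}. Your congruence computation forcing $2\psi\equiv 0\pmod{\pi}$ is just a more explicit rendering of the paper's observation that a reflection reverses the sign of that angle.
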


\begin{proof}
All rosette mappings have at least $n$ fold rotational symmetry, by
Proposition \ref{symmetry} (ii). Since $f_{\beta}$ has either exactly $n$
cusps, or exactly $n$ non-removable nodes, $f_{\beta}\left(  U\right)  $
cannot have a higher order of symmetry. For $\left\vert \beta\right\vert
\in\left(  0,\pi/2\right)  ,$ the axis of the cusp through $f_{\beta}\left(
1\right)  $ is parallel to the real axis, while the radial ray through $0$ and
$f_{\beta}\left(  1\right)  $ has argument $\psi=\arg f_{\beta}\left(
1\right)  ,$ distinct for each $\beta$ in $\left(  -\pi/2,\pi/2\right)  $ by
equation (\ref{cnarg})$.$ Moreover as noted in Lemma \ref{convexconcave},
$\psi$ is acute$,$ and has the same sign as $\beta.$ Thus if $\left\vert
\beta\right\vert \in\left(  0,\pi/2\right)  ,$ then any reflection of
$f_{\beta}\left(  U\right)  $ has angle of opposite signresults changing the
sign of the angle between the reflected cusp axis and the reflected radial
ray, resulting in a distinct reflected image set. Thus the symmetry group of
$f_{\beta}\left(  U\right)  $ is $C_{n}$ for $\left\vert \beta\right\vert
\in\left(  0,\pi/2\right)  .$ This fact extends by formula (\ref{trans}) to
any real $\beta$ that is not a multiple of $\pi/2.$ If $\beta=0$ or $\beta
=\pi/2$ we already established that $f_{\beta}\left(  U\right)  $ has
reflectional symmetry. We conclude that the sets $f_{0}\left(  U\right)  $ and
$f_{\pi/2}\left(  U\right)  $ have dihedral symmetry group $D_{n}$. If
$\beta=l\pi/2$ for some $l\in%
\mathbb{Z}
$ then by formula (\ref{trans}), $f_{\beta}\left(  U\right)  $ is a rotation
of either $f_{\pi/2}\left(  U\right)  $ or $f_{0}\left(  U\right)  ,$ and thus
has reflectional symmetry also.
\end{proof}

\begin{corollary}
\label{differ}Let $n\geq3.$ For distinct $\beta$ and $\beta^{\prime}$ in the
interval $(-\pi/2,\pi/2],$ the image sets $f_{\beta}\left(  U\right)  $ and
$f_{\beta^{\prime}}\left(  U\right)  $ are not scalings or rotations of one
another. Moreover with the parameter $\beta$ within the set $\left[
0,\pi/2\right]  ,$ all images of the unit disk under a rosette harmonic
mapping are obtained, up to rotation and reflection.
\end{corollary}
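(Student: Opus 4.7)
The plan is to treat the two assertions separately. For the distinctness claim, I first separate out the case where exactly one of $\beta,\beta^{\prime}$ equals $\pi/2$: by Corollary~\ref{cuspdir}, whenever $|\beta^{\prime}|<\pi/2$ the image $f_{\beta^{\prime}}(U)$ has $n$ genuine cusps, while by Corollary~\ref{nodes} the image $f_{\pi/2}(U)$ has $n$ nodes of interior angle $\pi/2-\pi/n$ and no cusps. Cusps and nodes of a prescribed interior angle are preserved under any homeomorphism of the plane, in particular under a composition of a rotation and a positive scaling, so these two sets cannot be related in that way.

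For the remaining cusped case $\beta,\beta^{\prime}\in(-\pi/2,\pi/2)$ I introduce the signed angle
\[
\psi(\beta)=\arg f_{\beta}(1),
\]
which by Corollary~\ref{cuspdir} is the angle from the cusp axis at $f_{\beta}(1)$ (an axis parallel to the real axis) to the radial ray from the origin through that cusp. Under a positive scaling $\psi$ is unchanged, and under a rotation the cusp axis and the radial ray rotate in step; hence $\psi$ is an invariant of the image set modulo rotation and scaling. The essential identity is Lemma~\ref{convexconcave}(i), which after applying the tangent subtraction formula reads
\[
\tan\psi(\beta)=\tan\!\left(\tfrac{\pi}{4}-\tfrac{\pi}{2n}\right)\tan(\beta/2),
\]
a strictly monotonic function of $\beta$ on $(-\pi/2,\pi/2)$ whose values lie inside $(-\pi/4,\pi/4)$. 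To close the argument I would suppose $f_{\beta^{\prime}}(U)=s\,e^{i\gamma}f_{\beta}(U)$ and track the image of the cusp-with-axis at $f_{\beta}(1)$: it must land on some cusp $e^{i2k\pi/n}f_{\beta^{\prime}}(1)$ of $f_{\beta^{\prime}}$ whose axis has argument $2k\pi/n\pmod{\pi}$. Matching axes gives $\gamma\equiv 2k\pi/n\pmod{\pi}$, while matching positions gives $\gamma+\psi(\beta)\equiv 2k\pi/n+\psi(\beta^{\prime})\pmod{2\pi}$. Subtracting forces $\psi(\beta^{\prime})\equiv\psi(\beta)\pmod{\pi}$, and the bound $|\psi|<\pi/4$ upgrades this to $\psi(\beta)=\psi(\beta^{\prime})$. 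Monotonicity then yields $\beta=\beta^{\prime}$, contradicting distinctness.

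For the exhaustiveness claim I would apply Proposition~\ref{parametrize} directly: part~(i) shows that $f_{\beta+\pi}(U)$ is a rotation of $f_{\beta}(U)$, reducing an arbitrary $\beta\in\mathbb{R}$ to some $\beta_{1}\in(-\pi/2,\pi/2]$ up to rotation, and part~(ii) shows that $f_{-\beta_{1}}(U)=\overline{f_{\beta_{1}}(U)}$, reducing further to $|\beta_{1}|\in[0,\pi/2]$ up to reflection in the real axis. The main obstacle in the whole argument will be the bookkeeping in the cusped case, namely ruling out that a rotation by $\gamma$ permutes the $n$ cusps of $f_{\beta}(U)$ in some nontrivial cyclic fashion that disguises a difference between $\psi(\beta)$ and $\psi(\beta^{\prime})$. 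This is exactly why I need to use both the cusp \emph{positions} and the cusp \emph{axis directions} simultaneously, and why the bound $|\psi|<\pi/4$ is the decisive ingredient that rules out the spurious alternative $\psi(\beta^{\prime})=\psi(\beta)+\pi$ left over after reduction modulo $\pi$.
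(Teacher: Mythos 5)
Your proposal is correct and follows essentially the same route as the paper: the invariant is the angle $\psi=\arg f_{\beta}(1)$ between the cusp axis (parallel to the real axis) and the radial ray through $f_{\beta}(1)$, distinct for each $\beta$ by (\ref{cnarg}), with the case $\beta'=\pi/2$ handled by the cusp/node dichotomy and the second assertion by Proposition \ref{parametrize}. You merely make explicit the mod-$\pi$ bookkeeping and the bound $|\psi|<\pi/4$ that the paper leaves implicit (and note that your parenthetical claim about arbitrary homeomorphisms preserving angles is an overstatement, though harmless since only similarities are used).
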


\begin{proof}
The proof of Corollary \ref{noreflect} shows that if $\beta\in\left(
-\pi/2,\pi/2\right)  ,$ the angle between the cusp axis through $f_{\beta
}\left(  1\right)  $ and the radial line from $0$ to $f_{\beta}\left(
1\right)  $ intersect at an angle that is distinct for each choice of $\beta.
$ Thus $f_{\beta}\left(  U\right)  $ is different from any rotation or scaling
of $f_{\beta^{\prime}}\left(  U\right)  $ for any other $\beta^{\prime}%
\in\left(  -\pi/2,\pi/2\right)  .$ Moreover $f_{\pi/2}$ is the only $f_{\beta
}$ without cusps for $\beta\in(-\pi/2,\pi/2].$ To prove the second statement,
let $\tilde{\beta}\in%
\mathbb{R}
\backslash(-\pi/2,\pi/2].$ Upon reducing $\tilde{\beta}$ modulo $\pi,$ we
obtain an equivalent $\beta\equiv\tilde{\beta}$ $(\operatorname{mod}$ $\pi)$
where $\beta\in(-\pi/2,\pi/2].$ By possibly repeated application of
(\ref{rotbet}) in Proposition \ref{parametrize} (i), $f_{\tilde{\beta}}\left(
z\right)  $ is obtained as a pre- and post- composition of a rotation with
$f_{\beta}\left(  z\right)  .$ Thus it is sufficient to consider $\beta
\in(-\pi/2,\pi/2].$ If $\beta<0,$ then by Proposition \ref{parametrize} (ii),
$f_{\beta}\left(  z\right)  $ is a reflection in the real axis of $f_{-\beta
}\left(  z\right)  $ where $-\beta\in\left[  0,\pi/2\right]  .$
\end{proof}

We turn to describing features of the graphs $f_{\beta}\left(  U\right)  $ for
$\beta$ restricted to $(-\pi/2,\pi/2].$ We may use equation (\ref{trans}) to
identify cusps and nodes for $\beta$ outside this interval, but we make the
cautionary observation that while for $\beta\in\left(  -\pi/2,\pi/2\right)  $
$f_{\beta}\left(  1\right)  $ is a cusp and $f_{\beta}\left(  e^{i\pi
/n}\right)  $ is a node, adding $\pi$ to $\beta$ results in $f_{\beta+\pi
}\left(  1\right)  $ being a node and $f_{\beta+\pi}\left(  e^{i\pi/n}\right)
$ being a cusp. Quantities such as the magnitude of a cusp, the magnitude of a
node, and the distribution of angle between cusps neighboring nodes however$,$
are independent of the interval to which $\beta$ is restricted$.$ These
quantities are derived in Theorem \ref{cuspsnodes}, with the help of Lemma
\ref{convexconcave}.

Theorem \ref{cuspsnodes} shows that for fixed $n,$ the image $f_{0}\left(
U\right)  $ has maximal diameter, and $diam\left(  f_{\left\vert
\beta\right\vert }\left(  U\right)  \right)  $ decreases with $\left\vert
\beta\right\vert \in\left(  0,\pi/2\right)  .\,$This is illustrated in Figure
\ref{five}, where all four graphs are plotted on the same scale. We also see
that for $\beta=0,$ the cusps and nodes are equally spaced with angular
separation $\pi/n,$ but this becomes increasingly unbalanced as $\left\vert
\beta\right\vert $ increases to $\pi/2.$ As $\beta\nearrow\pi/2^{-},$ each
node approaches the subsequent\footnote{Subsequent cusp (or node) here
indicates the cusp (or node) with the next largest argument.} cusp, as shown
in Theorem \ref{cuspsnodes} (ii).

\begin{theorem}
\label{cuspsnodes}Let $n\geq3,$ $\beta\in(-\pi/2,\pi/2],$ and let
$\alpha_{\beta}\left(  t\right)  =f_{\beta}\left(  e^{it}\right)  $ where
$f_{\beta}$ is a rosette harmonic mapping$.$ Recall the constant $K_{n}$
defined in Lemma \ref{convexconcave}$.$ \newline(i)$\ $If $\beta\in\left(
-\pi/2,\pi/2\right)  $ then the magnitude of the cusps decreases strictly if
$\left\vert \beta\right\vert \in\lbrack0,\pi/2),$ from a maximum $K_{n}%
\sec\left(  \frac{\pi}{2n}\right)  ,$ and with infimum $K_{n}\left(
1+\tan\left(  \frac{\pi}{2n}\right)  \right)  .$ The removable nodes have
magnitudes that increase strictly if $\left\vert \beta\right\vert \in
\lbrack0,\pi/2),$ from the minimum $K_{n}\left(  1+\tan\left(  \frac{\pi}%
{2n}\right)  \right)  $ and with supremum $K_{n}\sec\left(  \frac{\pi}%
{2n}\right)  .$\newline(ii) If $\beta\in\left(  -\pi/2,\pi/2\right)  $ then
the angle between a cusp neighboring node is
\begin{equation}
\pi/n\pm\arctan\left(  \frac{2\tan\left(  \frac{\pi}{2n}\right)  \sin\left(
\beta\right)  }{1-\tan^{2}\left(  \frac{\pi}{2n}\right)  }\right)  ,
\label{sep}%
\end{equation}
where we use the positive sign when the node has the more positive argument.
For $\beta=0,$ the cusps and neighboring nodes are separated by equal angles
of $\pi/n$. The difference in arguments of a cusp and subsequent node
increases from $0$ to $2\pi/n$ with $\beta\in\left(  -\pi/2,\pi/2\right)  $.
\newline(iii) For $\beta=\pi/2,$ the $n$ nodes of the rosette mapping
$f_{\pi/2}$ have magnitude \linebreak$K_{n}\sec\frac{\pi}{2n},$ and for the
node $f_{\pi/2}\left(  1\right)  $ we have $\arg\left(  f_{\pi/2}\left(
1\right)  \right)  =\pi/4-\pi/\left(  2n\right)  $.
\end{theorem}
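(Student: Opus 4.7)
The plan is to extract everything from the explicit polar-form expressions for $f_\beta(1)$ and $f_\beta(e^{i\pi/n})$ already packaged in Lemma \ref{convexconcave}, combined with the $n$-fold rotational symmetry of Theorem \ref{symmetry}(ii). For $|\beta|<\pi/2$, rotational symmetry forces every cusp to be a rotate of $f_\beta(1)$ and every removable node a rotate of $f_\beta(e^{i\pi/n})$; at $\beta=\pi/2$, Remark \ref{node} identifies every node as a rotate of $f_{\pi/2}(1)$. Consequently the magnitudes and arguments of all cusps and nodes are determined by those of the two representative boundary points, and only their dependence on $\beta$ remains to be analysed.

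For (i), I would start from $|f_\beta(1)|^2=K_n^2\bigl(\sec^2\tfrac{\pi}{2n}+2\tan\tfrac{\pi}{2n}\cos\beta\bigr)$ and differentiate in $\beta$ to get $-2K_n^2\tan\tfrac{\pi}{2n}\sin\beta$, which is strictly negative on $(0,\pi/2)$; since the expression depends only on $\cos\beta$, the same monotonicity is inherited in $|\beta|$. Evaluating at $\beta=0$ via the identity $\sec^2 x+2\tan x=(1+\tan x)^2$ and letting $|\beta|\to\pi/2$ (so $\cos\beta\to 0$) pins down the two extreme values $K_n(1+\tan\tfrac{\pi}{2n})$ and $K_n\sec\tfrac{\pi}{2n}$. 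The corresponding computation for $|f_\beta(e^{i\pi/n})|^2=K_n^2\bigl(\sec^2\tfrac{\pi}{2n}-2\tan\tfrac{\pi}{2n}\cos\beta\bigr)$, using $\sec^2 x-2\tan x=(1-\tan x)^2$, gives the node statement with the reverse monotonicity.

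For (ii), set $T=\tan(\pi/(2n))$. Lemma \ref{convexconcave}(i) gives $\tan\psi=\tfrac{1-T}{1+T}\tan(\beta/2)$ for the cusp argument $\psi=\arg f_\beta(1)$ and $\tan\psi'=\tfrac{1+T}{1-T}\tan(\beta/2)$ for the shifted node argument $\pi/n+\psi'=\arg f_\beta(e^{i\pi/n})$. The tangent subtraction formula then yields
\[
\tan(\psi'-\psi)=\frac{\tan\psi'-\tan\psi}{1+\tan\psi\,\tan\psi'}
=\frac{4T\tan(\beta/2)/(1-T^2)}{1+\tan^2(\beta/2)}
=\frac{2T\sin\beta}{1-T^2},
\]
using $\tfrac{1+T}{1-T}-\tfrac{1-T}{1+T}=\tfrac{4T}{1-T^2}$ and the double-angle identity $2\tan(\beta/2)/(1+\tan^2(\beta/2))=\sin\beta$. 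Since $|\psi|,|\psi'|<\pi/2$ throughout, $\arctan$ recovers $\psi'-\psi$ correctly, and this is the angle separating a cusp from its following node, i.e.\ the $+$ branch of (\ref{sep}). The $-$ branch (cusp paired with the node at the immediately smaller argument) drops out of reflecting in the real axis via Proposition \ref{parametrize}(ii), which replaces $\beta$ by $-\beta$. The extremes $0$ and $2\pi/n$ as $\beta\to\mp\pi/2$ come from the tangent double-angle identity $2T/(1-T^2)=\tan(\pi/n)$, while monotonicity in $\beta$ is inherited from $\sin\beta$.

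For (iii), setting $\beta=\pi/2$ in the Lemma formulas yields $|f_{\pi/2}(1)|=K_n\sec(\pi/(2n))$ and $\arg f_{\pi/2}(1)=\pi/4-\pi/(2n)$ immediately, and $n$-fold rotational symmetry transports this magnitude to each of the other $n-1$ nodes. The only delicate point I anticipate is the sign/branch bookkeeping in (ii): verifying that $\psi'-\psi$ stays within a single period of $\arctan$ across the whole range $|\beta|<\pi/2$, and attaching the $\pm$ in (\ref{sep}) to the correct one of the two nodes neighbouring a given cusp. Everything else is a short computation or an appeal to the already-established symmetry.
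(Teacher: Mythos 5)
Your proposal is correct and follows essentially the same route as the paper: reduce to the representative points $f_\beta(1)$ and $f_\beta(e^{i\pi/n})$ via rotational symmetry, read the magnitudes off (\ref{cnmag}) using $\sec^2x\pm2\tan x=(1\pm\tan x)^2$, obtain (\ref{sep}) from the tangent subtraction and half-angle identities applied to (\ref{cnarg}), handle the opposite sign by reflection via Proposition \ref{parametrize}(ii), and read (iii) directly from Lemma \ref{convexconcave}(i) together with Remark \ref{node}. The branch check you flag does go through, since $\psi$ and $\psi'$ share the sign of $\beta$ and each lies in $(-\pi/2,\pi/2)$, so $|\psi'-\psi|<\pi/2$.
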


\begin{proof}
We begin with (i). Due to the $\cos\beta$ term in (\ref{cnmag}), the cusp
$\left\vert f_{\beta}\left(  1\right)  \right\vert $ is decreasing with
$\left\vert \beta\right\vert $ and the node $\left\vert f_{\beta}\left(
e^{i\pi/n}\right)  \right\vert $ is increasing with $\left\vert \beta
\right\vert .$ Note that
\[
\sec^{2}\left(  \frac{\pi}{2n}\right)  \pm2\tan\left(  \frac{\pi}{2n}\right)
\cos\beta=1+\tan^{2}\left(  \frac{\pi}{2n}\right)  \pm2\tan\left(  \frac{\pi
}{2n}\right)  =\left(  1\pm\tan\left(  \frac{\pi}{2n}\right)  \right)  ^{2}.
\]
In view of this equation, the maximum of $\left\vert f_{\beta}\left(
1\right)  \right\vert $ is $K_{n}\left(  1+\tan\left(  \frac{\pi}{2n}\right)
\right)  $ when $\beta=0,$ and $\left\vert f_{\beta}\left(  1\right)
\right\vert $ approaches $K_{n}\sec\left(  \frac{\pi}{2n}\right)  $ as $\beta$
approaches $\pi/2.$ Similarly, the minimum of $\left\vert f_{\beta}\left(
e^{i\pi/n}\right)  \right\vert $ is $K_{n}\left(  1-\tan\left(  \frac{\pi}%
{2n}\right)  \right)  $ when $\beta=0,$ and approaches $K_{n}\left(
\sec\left(  \frac{\pi}{2n}\right)  \right)  $ as an upper bound as $\left\vert
\beta\right\vert $ approaches $\pi/2,$ proving (i)$.$ For (ii) formula
(\ref{cnarg}) shows us $\arg f_{\beta}\left(  1\right)  $ and $\arg f_{\beta
}\left(  e^{i\pi/n}\right)  $ are increasing with $\beta$ on $\left(
-\pi/2,\pi/2\right)  .$ Clearly $\arg\left(  f_{0}\left(  1\right)  \right)
=0$ and $\arg f_{0}\left(  e^{i\pi/n}\right)  =\pi/n.$ Additionally we have
$\arg\left(  f_{\beta}\left(  1\right)  \right)  <\pi/n<\arg f_{\beta}\left(
e^{i\pi/n}\right)  \,\ $for $\beta\in\left(  0,\pi/2\right)  .$ We compute the
angle between this cusp and node to be $\arg f_{\beta}\left(  e^{i\pi
/n}\right)  -\arg f_{\beta}\left(  1\right)  =\pi/n+\psi^{\prime}-\psi,$ and
we use the arctangent formula $\arctan\psi^{\prime}-\arctan\psi=\arctan\left(
\frac{\psi^{\prime}-\psi}{1+\psi^{\prime}\psi}\right)  $ where $\psi^{\prime}$
and $\psi$ are as defined in equation (\ref{cnarg}). Thus after a short
calculation we obtain formula (\ref{sep}). This difference increases from
$\pi/n$ and approaches $2\pi/n$ as $\beta$ increases through the interval
$[0,\pi/2)$. Now suppose that $\beta\in\left(  -\pi/2,0\right)  .$ The
reflection of the node $f_{\beta}\left(  e^{i\pi/n}\right)  $ in the real axis
is the node $f_{-\beta}\left(  e^{-i\pi/n}\right)  $ of $f_{-\beta}$ and the
reflection of $f_{\beta}\left(  1\right)  $ is the cusp $f_{-\beta}\left(
1\right)  .$ From rotational symmetry, the angular difference between $\arg
f_{-\beta}\left(  1\right)  $ and $\arg f_{-\beta}\left(  e^{-i\pi/n}\right)
$ is the same as the that of $\arg f_{-\beta}\left(  e^{i2\pi/n}\right)  $ and
$\arg f_{-\beta}\left(  e^{i\pi/n}\right)  .$ Again using rotational symmetry,
this is $2\pi/n-\left(  \psi-\psi^{\prime}\right)  =\pi/n-\arctan\left(
\frac{\psi^{\prime}-\psi}{1+\psi^{\prime}\psi}\right)  =\pi/n-\arctan\left(
\frac{2\tan\left(  \frac{\pi}{2n}\right)  \sin\left(  -\beta\right)  }%
{1-\tan^{2}\left(  \frac{\pi}{2n}\right)  }\right)  =\pi/n+\arctan\left(
\frac{2\tan\left(  \frac{\pi}{2n}\right)  \sin\left(  \beta\right)  }%
{1-\tan^{2}\left(  \frac{\pi}{2n}\right)  }\right)  $. For (iii), with
$\beta=\pi/2, $ we noted in Remark \ref{node} after Corollary \ref{nodes} that
the nodes can be expressed as rotations of $f_{\pi/2}\left(  1\right)  ,$ for
which we observed the stated quantities in Lemma \ref{convexconcave} (i)$.$
\end{proof}

The next example illustrates the separation of cusps and nodes as it varies
with $\beta,$ as described in Theorem \ref{cuspsnodes} (see also Figure
\ref{cuspnodepic5})$.$ The relative proximity of a node and neighboring cusp
coupled with equal total curvature of the boundary curve between any node and
cusp (Corollary \ref{cuspdir}) gives rise to the appearance of a cresting wave
at the cusp.

\begin{example}
\label{cuspnody}For $n=5,$ the arguments of the nodes and cusps of $f_{0}$ are
equally spaced by $\pi/n=\pi/5.$ By Lemma \ref{convexconcave}, as $\beta
\in\lbrack0,\pi/2)$ increases, the separation of a cusp and subsequent node
grows from $\pi/5=36^{\circ}$ towards $2\pi/5=72^{\circ}. $ For $\beta=\pi/4$
this separation is $\pi/5+\arctan\sqrt{5/2-\sqrt{5}}\approx63^{\circ},$ while
for $\beta=2\pi/5$ it grows to $\pi/5+\arctan\sqrt{5/4-\sqrt{5}/4}%
\approx71^{\circ}$ (see Figure \ref{cuspnodepic5}). For a specific cusp or
node $f_{\beta}\left(  e^{ij_{0}\pi/n}\right)  ,$ $\arg f_{\beta}\left(
e^{ij_{0}\pi/n}\right)  $ increases with $\beta$ (see proof of Theorem
\ref{cuspsnodes} (ii))$,\ $which is also illustrated in Figure
\ref{cuspnodepic5} as $\beta$ increases from $\pi/4$ to $2\pi/5.$ By Corollary
\ref{cuspdir}, the total curvature of the boundary of $f_{\beta}$ between
neighboring cusps is $\pi-2\pi/n=3\pi/5=108^{\circ}.$ The total curvature of
the boundary from a cusp to a neighboring node is half of this, namely
$54^{\circ}$. Figure \ref{cuspnodepic5} also indicates the phenomenon
described in Theorem \ref{cuspsnodes}, that while both nodes and cusps
``rotate counterclockwise" as $\beta\in\left(  0,\pi/2\right)  $ increases,
the nodes do so at a greater rate..
\end{example}

Finally we point out that the argument of the boundary curve fails to be
strictly increasing on the whole of $\partial U,$ for $\beta$ with $\left\vert
\beta\right\vert \in\left(  0,\pi/2\right)  .$

\begin{corollary}
\label{parallel}Let $n\geq3$, $\beta\in\left(  -\pi/2,\pi/2\right)  $ and
$\alpha_{\beta}$ be the boundary function of the rosette harmonic mapping
$f_{\beta}.$ Then for each $k=1,2,...,n,$ $\arg\alpha_{\beta}\left(  \left(
2k-2\right)  \pi/n\right)  ,\ $\newline$\arg\alpha_{\beta}\left(  \left(
2k-1\right)  \pi/n\right)  ,$ and $\arg\alpha_{\beta}\left(  2k\pi/n\right)  $
occur in increasing order$,$ but there exists an interval on which
$\arg\left(  \alpha_{\beta}\left(  t\right)  \right)  $ is decreasing.
\end{corollary}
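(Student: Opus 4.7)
The claim decomposes naturally into two parts: the ordering of $\arg\alpha_\beta$ at the three marked points, and the existence of a decreasing interval.

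For the ordering, Lemma \ref{convexconcave}(i) supplies $\arg\alpha_\beta(0)=\psi$ and $\arg\alpha_\beta(\pi/n)=\pi/n+\psi'$ with $\psi,\psi'$ as in (\ref{cnarg}), and Theorem \ref{symmetry}(ii) yields $\arg\alpha_\beta(2\pi/n)=2\pi/n+\psi$. Using the tangent double-angle identity $\tan(\pi/n)=2\tan(\pi/(2n))/(1-\tan^2(\pi/(2n)))$, formula (\ref{sep}) of Theorem \ref{cuspsnodes}(ii) for the angular separation between a cusp and a neighboring node simplifies to $\pi/n\pm\arctan(\tan(\pi/n)\sin\beta)$. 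For $|\beta|<\pi/2$ we have $|\sin\beta|<1$, so this arctan has absolute value strictly less than $\pi/n$, and each separation lies in $(0,2\pi/n)$. This forces $\psi<\pi/n+\psi'<2\pi/n+\psi$ for $k=1$; rotational symmetry extends the ordering to every $k$.

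For the decreasing interval, I use $\frac{d}{dt}\arg\alpha_\beta(t)=\mathrm{Im}(\alpha_\beta'(t)/\alpha_\beta(t))$, whose sign agrees with that of $\sin(\arg\alpha_\beta'(t)-\arg\alpha_\beta(t))$. By Proposition \ref{parametrize}(ii) together with rotational symmetry, it suffices to treat $\beta>0$. Equation (\ref{compass}) at $k=1$ gives $\arg\alpha_\beta'((2\pi/n)^-)=2\pi/n$, while rotational symmetry yields $\arg\alpha_\beta(2\pi/n)=2\pi/n+\psi$ with $\psi>0$. Hence $\arg(\alpha_\beta'(t)/\alpha_\beta(t))\to-\psi$ as $t\nearrow 2\pi/n$, so $\sin(-\psi)<0$ and, by continuity, $\arg\alpha_\beta$ is strictly decreasing on a left neighborhood of $2\pi/n$. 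The blow-up of $|\alpha_\beta'|$ at $2\pi/n$ does not affect the conclusion since $|\alpha_\beta|$ is bounded and the limiting argument of $\alpha_\beta'/\alpha_\beta$ is unambiguous.

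The main obstacle is the boundary case $\beta=0$: then $\psi=0$, so the first-order sign test vanishes, and in fact the $D_n$ symmetry of $f_0(U)$ combined with the reflection identity $\alpha_0(-t)=\overline{\alpha_0(t)}$ indicates that $\arg\alpha_0$ is strictly monotonic on $(0,2\pi/n)$, so no decreasing interval exists for $\beta=0$. Because the prose preceding the corollary restricts the non-monotonicity assertion to $|\beta|\in(0,\pi/2)$, I read the second clause of the corollary under that restriction; under it, the argument above suffices.
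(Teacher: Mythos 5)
Your proof is correct, and for the ordering of the arguments at the cusps and nodes it is essentially the paper's argument: both reduce to the fact that the separation (\ref{sep}) lies strictly between $0$ and $2\pi/n$ when $\left\vert \sin\beta\right\vert <1$, and then propagate the $k=1$ case by the rotational symmetry (\ref{RS}). For the existence of a decreasing interval you take a genuinely different route. The paper argues by contradiction: using $\arg\alpha_{\beta}^{\prime}\left(  0\right)  ^{-}=0$ from (\ref{cuspslopes}) it shows that if $\arg\alpha_{\beta}\left(  t\right)  \leq\psi$ just to the left of $t=0$ then $\alpha_{\beta}\left(  t\right)  $ would be trapped in a half-plane missing the cusp $\alpha_{\beta}\left(  0\right)  $, contradicting continuity; hence $\arg\alpha_{\beta}>\psi$ on a left neighborhood and must decrease back down to $\psi$. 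You instead compute the sign of $\frac{d}{dt}\arg\alpha_{\beta}\left(  t\right)  =\operatorname{Im}\left(  \alpha_{\beta}^{\prime}\left(  t\right)  /\alpha_{\beta}\left(  t\right)  \right)  $ directly: since $\arg\alpha_{\beta}^{\prime}\rightarrow2\pi/n$ while $\arg\alpha_{\beta}\rightarrow2\pi/n+\psi$ with $\psi>0$, the difference tends to $-\psi$ and the sign is eventually negative. Both proofs rest on the same geometric fact (the cusp axis has argument $2k\pi/n$ while the cusp itself sits at argument $2k\pi/n+\psi$), but your version is more quantitative and avoids the half-plane contradiction; it even yields strict monotone decrease on an explicit one-sided neighborhood of the cusp, which is slightly more than the paper claims. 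Your handling of the blow-up of $\left\vert \alpha_{\beta}^{\prime}\right\vert $ at the cusp is fine, since only the argument of $\alpha_{\beta}^{\prime}/\alpha_{\beta}$ matters for the sign. Finally, your caveat about $\beta=0$ is well taken: the corollary as stated includes $\beta=0$, where $\psi=0$ and the second clause in fact fails (there $\arg\alpha_{0}$ is nondecreasing by the dihedral symmetry), and the paper's own proof, like yours, silently treats only $\beta\neq0$, consistent with the prose preceding the corollary. So your proof covers exactly the cases the paper's does, and is arguably cleaner on the non-monotonicity half.
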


\begin{proof}
Theorem \ref{cuspsnodes} (ii) shows that the angle $\gamma$ between\ a cusp
and subsequent node is given by formula (\ref{sep}) which belongs to $\left(
0,2\pi/n\right)  $. Because the cusps are separated by angle $2\pi/n,$ the
angle from a cusp to a subsequent node is then $2\pi/n-\gamma\in\left(
0,2\pi/n\right)  .$ Finally with $\beta\in\left(  0,\pi/2\right)  ,$ suppose
that $\arg\alpha_{\beta}\left(  -\epsilon\right)  \leq\psi=\arg\alpha_{\beta
}\left(  0\right)  $ for some $-\epsilon\in\left(  -\pi/n,0\right)  ,$ but for
which $\arg\alpha_{\beta}^{\prime}\left(  -\epsilon\right)  <\tan\psi.$ Such
an $\epsilon$ exists because $\alpha_{\beta}$ satisfies (\ref{cuspslopes})
with $k=0,$ so $\arg\alpha_{\beta}^{\prime}\left(  0\right)  ^{-}=0.$ Since
$\arg\alpha_{\beta}^{\prime}\left(  t\right)  $ is decreasing, we have
$\arg\alpha_{\beta}\left(  t\right)  <\psi$ for all $t\in\left(
-\epsilon/2,0\right)  ,$ and so $\alpha_{\beta}\left(  t\right)  $ lies in a
half-plane formed by a line parallel to to $\arg z=\psi,$ but with smaller
imaginary part, and this half plane does not contain $\alpha_{\beta}\left(
0\right)  .$ This contradicts continuity of $\alpha_{\beta}\left(  t\right)
\rightarrow\alpha_{\beta}\left(  0\right)  $ as $t\nearrow0^{-}.$ Thus there
is an interval $\left(  -\epsilon,0\right)  $ on which $\arg\alpha_{\beta
}\left(  t\right)  >\psi.$ On this interval, $\arg\alpha_{\beta}\left(
t\right)  $ decreases to $\psi,$ and so rotates negatively relative to the
origin. A similar argument holds when $\beta\in\left(  -\pi/2,0\right)  .$
\end{proof}

\section{Univalence and Fundamental Sets}

Our approach to proving the univalence of $f_{\beta}$ is to use the argument
principle for harmonic functions. We note that various proofs of univalence
for rosette mappings $f_{\beta}$ are possible. The following theorem describes
the winding number of the boundary curve $\alpha\left(  t\right)  =f_{\beta
}\left(  e^{it}\right)  ,$ so that we can apply the argument principle.

\begin{lemma}
\label{unibdry}For fixed $\beta\in\lbrack0,\pi/2),$ the boundary
$\alpha_{\beta}\left(  t\right)  =f_{\beta}\left(  e^{it}\right)  ,$
$t\in\partial U$ is a simple, positively oriented closed curve. While
$\alpha_{\pi/2}$ has arcs of constancy, the parametrization $\tilde{\alpha
}_{\pi/2}$ of equation (\ref{halfspeed}) is a simple, positively oriented
closed curve on $\partial U.$
\end{lemma}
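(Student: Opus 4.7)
The plan is to verify closedness and orientation directly, then establish simplicity by exploiting $n$-fold rotational symmetry to reduce to a fundamental cusp-to-cusp arc. Closedness of $\alpha_\beta$ is immediate from periodicity. For orientation, I compute the total change in $\arg\alpha_\beta'$ over $[0,2\pi]$: by \eqref{compass}, the tangent direction decreases by $\pi-2\pi/n$ on each of the $n$ intervals between cusps, while at each of the $n$ cusps it jumps by $+\pi$ (the reversal in \eqref{cuspslopes}). The total change is $-n(\pi-2\pi/n)+n\pi = 2\pi$, so positive orientation will follow once simplicity is proved.

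For simplicity, set $\Gamma_1 = \alpha_\beta([0,2\pi/n])$. The rotational symmetry $\alpha_\beta(t+2\pi/n) = e^{i2\pi/n}\alpha_\beta(t)$ (Theorem \ref{symmetry}(ii)) gives the full boundary as the union $\bigcup_{k=0}^{n-1}e^{i2k\pi/n}\Gamma_1$, so it suffices to show that $\Gamma_1$ is a simple arc and that the ``wedge'' bounded by $\Gamma_1$ together with the radial images $f_\beta([0,1])$ and $f_\beta([0,e^{i2\pi/n}])$ has disjoint $n$-fold rotations (meeting only along these radial arcs and at the cusps). For simplicity of $\Gamma_1$ alone, I split at the removable node $\alpha_\beta(\pi/n)$: on each half-arc, Theorem \ref{fbdry}(i) gives total variation of $\arg\alpha_\beta'$ equal to $\pi/2-\pi/n < \pi/2$, so after a suitable rotation the real part of the parametrization is strictly monotonic, making each sub-arc injective. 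The two sub-arcs meet only at the common node endpoint, since by Corollary \ref{cuspdir} the tangent direction is continuous and points radially there.

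The main obstacle is verifying disjointness of the wedge interiors, which is delicate because Corollary \ref{parallel} shows $\arg\alpha_\beta(t)$ is not monotonic in $t$---the boundary curve ``overshoots'' near each cusp. I plan to combine Lemma \ref{convexconcave}(ii) (the radial images $f_\beta(re^{ij\pi/n})$ are simple with strictly increasing magnitude and controlled tangent behavior) with Corollary \ref{rigid} (the two halves of $\Gamma_1$ are rigid motions of $\partial h_n(U)$) to confine $\Gamma_1$ to a region whose $n$ rotates tile $f_\beta(\bar U)$ without interior overlap. For $\tilde\alpha_{\pi/2}$, the half-speed reparametrization \eqref{halfspeed} collapses each arc of constancy to a corner of interior angle $\pi/2-\pi/n$ (Corollary \ref{nodes}); the total turning is still $2\pi$ (accounted now by $n$ smooth arcs joined at $n$ nodes) and the wedge-tiling argument carries over.
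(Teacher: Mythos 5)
Your treatment of the easy parts is sound: closedness from periodicity, total turning $-n(\pi-2\pi/n)+n\pi=2\pi$ for the orientation, and injectivity of the cusp-to-cusp arc $\Gamma_1=\alpha_\beta([0,2\pi/n])$ from the strict monotonicity of $\arg\alpha_\beta'$ with total variation $\pi-2\pi/n<\pi$ (you do not even need to split at the removable node; the monotone-projection argument applies to the whole arc at once, and your separate justification that the two half-arcs ``meet only at the node'' is not actually a proof). This much agrees with the paper, which also reduces to $\Gamma_1$ by rotational symmetry and gets injectivity there from the bounded monotone turning.

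The genuine gap is the step you label ``the main obstacle'' and then only \emph{plan} to carry out: showing that the $n$ rotated copies of $\Gamma_1$ meet only at the shared cusps. Your proposed confinement region --- the wedge bounded by $\Gamma_1$ and the radial image curves $f_\beta([0,1])$ and $f_\beta([0,e^{i2\pi/n}])$ --- is problematic for two reasons. First, the fact that this wedge is bounded by those three curves, and that its $n$ rotations have disjoint interiors, is essentially equivalent to the univalence of $f_\beta$ on the closed sectors (the paper explicitly derives the boundary description of the fundamental set $\mathcal{A}_{\beta,n}$ \emph{from} univalence, after Definition \ref{fundamental}), so using it here is circular unless you prove it independently, which neither Lemma \ref{convexconcave}(ii) nor Corollary \ref{rigid} does. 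Second, even granting the wedges, the containment $\Gamma_1\subset\overline{\text{wedge}}$ is delicate near the cusps for exactly the reason you cite from Corollary \ref{parallel}: $\arg\alpha_\beta(t)$ overshoots near each cusp, so the adjacent arc $\Gamma_0$ crosses into the angular region of $\Gamma_1$, and one must check it does not cross the \emph{curve} $f_\beta([0,1])$ rather than the radial ray. The paper avoids all of this by confining $\Gamma_1$ to an unbounded planar sector $S$ whose sides are the two cusp \emph{axes} $L_0$ and $L_1$ (tangent lines at the cusps): containment follows purely from $\arg\alpha_\beta'((0,2\pi/n))\subset(2\pi/n,\pi)$, no interior data about $f_\beta$ is needed, and the pairwise disjointness of the rotated sectors $e^{i2k\pi/n}S$ is then a Euclidean-geometry fact about the regular $n$-gon formed by the intersection points of consecutive axes (handled via an auxiliary sector $S'$ and an exchange of congruent pieces $E$). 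You would need to either adopt that device or supply an independent, non-circular proof that your wedges tile; as written the argument is incomplete at its central step, and the same incompleteness propagates to your one-sentence treatments of $\beta=0$ and $\beta=\pi/2$.
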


\begin{proof}
Let $\beta\in\left(  0,\pi/2\right)  .$ We give a separate argument for
$\beta=0$ and for $\beta=\pi/2.$ We first show that $\alpha_{\beta}$ is one to
one when restricted to $\left(  0,2\pi/n\right)  ,$ and that the boundary
curve portion $\alpha_{\beta}\left(  \left(  0,2\pi/n\right)  \right)  $ lies
in a sector $S.$ We then show that the portion of the graph of $\alpha_{\beta
}$ restricted to the interval $\left(  2k\pi/n,2\left(  k+1\right)
\pi/n\right)  $ is contained within the set $e^{i2k\pi/n}S.$ The sets
$\left\{  e^{i2k\pi/n}S:k=0,1,...,n-1\right\}  $ are then seen to be pairwise
disjoint, so that the curve $\alpha_{\beta}$ has no self intersections on
$[0,2\pi),$ and we conclude $\alpha_{\beta}$ is a simple closed curve on
$\partial U.$

Define $L_{k}$ to be the axis of the cusp $\alpha_{\beta}\left(  2k\pi\right)
,$ so with argument equal to $2k\pi/n,$ $k=0,1,..,n-1.$ Recall from Theorem
(\ref{fbdry}) that $\alpha_{\beta}$ satisfies (\ref{compass}) (which holds
except at $t=\left(  2k+1\right)  \pi/n$). We begin with $\alpha_{\beta}$ on
the interval $\left(  0,2\pi/n\right)  .$ Let $p$ be the intersection of
$L_{0}$ (parallel to the real axis) and $L_{1}.$ These non-collinear lines
form the boundary of four unbounded open connected sectors, and we define $S$
to be the sector with vertex $p$ \ for which the (distinct) cusps
$\alpha_{\beta}\left(  0\right)  \in L_{0}$ and $\alpha_{\beta}\left(
2\pi/n\right)  \in L_{1}$ belong to $\partial S.$ By equation (\ref{compass}%
)$,$ $\arg\alpha_{\beta}^{\prime}\left(  t\right)  $ is decreasing and
$\arg\alpha_{\beta}^{\prime}\left(  0\right)  ^{+}=\pi.$ Thus the curve
$\alpha_{\beta}$ lies \textquotedblleft above" $L_{0}$. If $\alpha_{\beta}$
were to cross $L_{1}$ at some $t_{1}\in\left(  0,2\pi/n\right)  ,$ then
$\arg\alpha_{\beta}^{\prime}\left(  t_{1}^{\prime}\right)  <2\pi/n$ for some
$t_{1}^{\prime}\in\left(  t_{1},2\pi/n\right)  $ in order for $\arg
\alpha_{\beta}^{\prime}\left(  2\pi/n\right)  ^{-}=2\pi/n$ in equation
(\ref{cuspslopes}) to hold. However $\arg\alpha_{\beta}^{\prime}\left(
\left(  0,2\pi/n\right)  \right)  \subset\left(  2\pi/n,\pi\right)  $ so no
such intersection can occur, and we conclude that $\alpha_{\beta}$ restricted
to $\left(  0,2\pi/n\right)  $ lies within the region $S.$ Moreover, because
$\arg\alpha_{\beta}^{\prime}\left(  t\right)  $ decreases strictly, with total
change $\pi-2\pi/n<\pi,$ $\alpha_{\beta}$ must be one to one on $\left(
0,2\pi/n\right)  .$ Now let $k=1,2,..,n,$ and consider the curve
$\alpha_{\beta}$ restricted to $\left(  2k\pi/n,2\left(  k+1\right)
\pi/n\right)  .$ By rotational symmetry of $f_{\beta}$, $\alpha_{\beta}\left(
t+2k\pi/n\right)  =e^{i2k\pi/n}\alpha_{\beta}\left(  t\right)  $,
$\alpha_{\beta}$ is also one to one on $\left(  2k\pi/n,2\left(  k+1\right)
\pi/n\right)  $, and the graph $\alpha_{\beta}\left(  \left(  2k\pi/n,2\left(
k+1\right)  \pi/n\right)  \right)  \subseteq e^{i2k\pi/n}S.$ We complete the
proof for $\beta\in\left(  0,\pi/2\right)  $ by showing that the sets
$\left\{  e^{i2k\pi/n}S:k=0,1,...,n-1\right\}  $ are pairwise disjoint.
Because of rotational symmetry, the line $e^{i2k\pi/n}L_{0}$ passes through
$e^{i2k\pi/n}\alpha_{\beta}\left(  0\right)  =\alpha_{\beta}\left(
2k\pi/n\right)  ,$ a cusp, and $e^{i2k\pi/n}L_{0}$ has argument $2k\pi/n,$
so\linebreak\ $e^{i2k\pi/n}L_{0}=L_{k}.$ Similarly $e^{i2k\pi/n}L_{1}%
=L_{k+1}.$ Thus $e^{i2k\pi/n}S$ is a sector with sides $L_{k}$ and $L_{k+1},$
and vertex at their point of intersection $e^{i2k\pi/n}p.$ The points
$\left\{  e^{i2k\pi/n}p:k=0,..,n-1\right\}  $ form the vertices of a regular
$n$-gon, that we denote by $P.$ Then $P$ is centered on the origin, and we
have $\operatorname{Im}p>0,$ since the cusp $\alpha_{\beta}\left(  0\right)  $
has argument $\psi=\arg\alpha_{\beta}\left(  0\right)  \in\left(
0,\pi/4\right)  $ by Lemma \ref{convexconcave} (ii). Note also that
$L_{n}=L_{0},$ and $L_{n-1}=e^{-i2\pi/n}L_{0}.$ Thus $L_{0}$ and $L_{n-1}$
intersect at $e^{-i2\pi/n}p,$ with $\operatorname{Im}\left(  e^{-i2\pi
/n}p\right)  =\operatorname{Im}\left(  p\right)  .$ Therefore $p$ is in the
second quadrant, and $e^{-i2\pi/n}p$ is in the first quadrant. We conclude
that one bounding side of the sector $S$ is $\left\{  z\in L_{0}:\arg
z\leq\arg p\right\}  $. By rotational symmetry, the second side of $S$
is\linebreak\ $\left\{  z\in L_{1}:\arg z\leq2\pi/n+\arg p\right\}  $. Thus
the rays $\left\{  z\in L_{k}:\arg z\leq2k\pi/n+\arg p\right\}  $ that extend
the sides of $P,$ each originating at $e^{i2k\pi/n}p$ and extending in the
direction of decreasing argument, form the boundaries of the sectors
$e^{i2k\pi/n}S,$ where $k=0,1,...,n-1$ (see Figure 7 (left) where $S$ is shaded).%

{\includegraphics[
height=2.2364in,
width=2.4561in
]%
{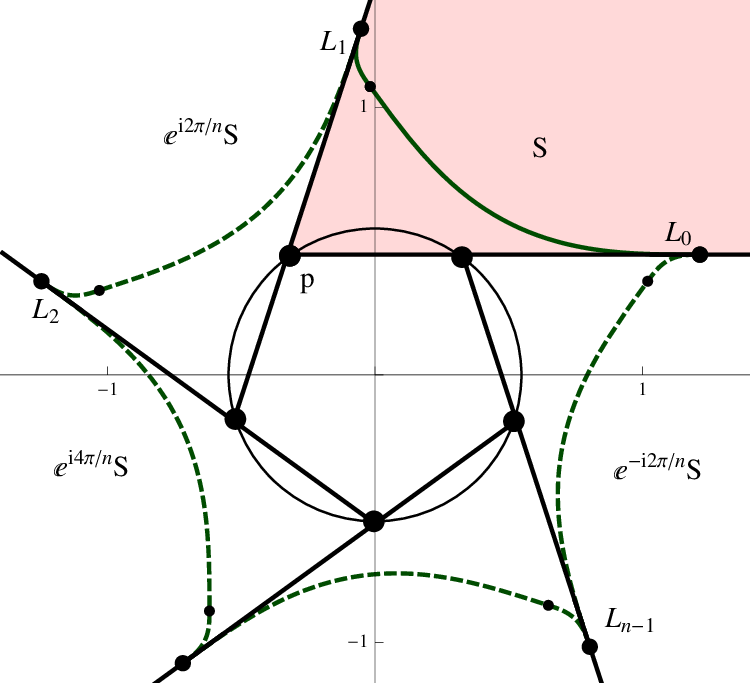}%
}%
\ \
{\includegraphics[
height=2.1724in,
width=2.1136in
]%
{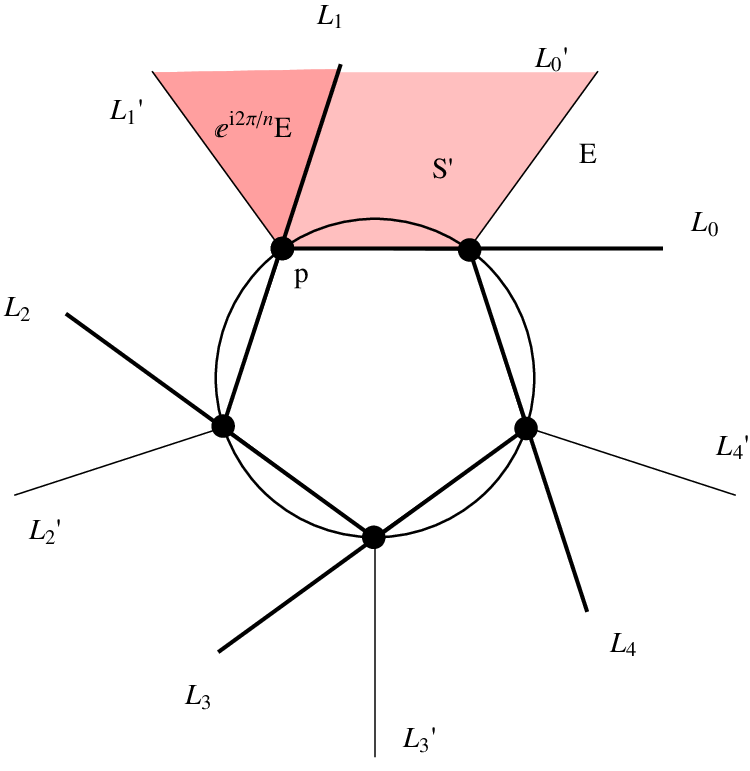}%
}%
\ \ \newline\textsc{Figure 7.} {\small The sector }$S${\small \ and regular
}$n${\small -gon }$P${\small \ (left). The set }$S^{\prime}$ {\small lies on
the non-zero side of the line }$L_{0},${\small \ and is bounded by }%
$L_{0}^{\prime}${\small \ and }$L_{1}^{\prime}${\small \ (right).\medskip}

Various approaches are possible to demonstrate that the sectors $e^{i2k\pi
/n}S$ are disjoint. For instance, note that $\arg p=\pi/2+\pi/n,$ and consider
the related sector
\[
S^{\prime}=\left\{  z\in\text{ext}\left(  P\right)  :\arg z\in\lbrack\pi
/2-\pi/n,\pi/2+\pi/n)\right\}  .
\]
The sets $e^{i2k\pi/n}S^{\prime}$ are clearly disjoint for $k=0,1,...,n-1,$
since the arguments of points in the sets $e^{i2k\pi/n}S^{\prime}$ for
distinct $k\in\left\{  0,1,...,n-1\right\}  $ are in disjoint intervals (see
the right of Figure 7, where $S^{\prime}$ is region that is shaded). Let
$L_{k}^{\prime}$ be the line through the origin and with argument
$\pi/2+\left(  2k-1\right)  \pi/n,$ and let $E$ be the set $E=\left\{  z\in
ext\left(  P\right)  :\arg z<\pi/2-\pi/n,\text{ }\operatorname{Im}%
z\geq\operatorname{Im}p\right\}  ,$ bounded by $L_{0}$ and $L_{0}^{\prime}$
and $P.$ We obtain $S$ from $S^{\prime}$ by including the set $E,$ and
excluding the set $e^{i2k\pi/n}E$ (darker shaded region in the right of Figure
7) from $S^{\prime}.$ Then $S=S^{\prime}\cup E\backslash e^{i2\pi/n}E,$ and by
rotational symmetry,
\[
e^{i2k\pi/n}S=e^{i2k\pi/n}S^{\prime}\cup e^{i2k\pi/n}E\backslash e^{i2\left(
k+1\right)  \pi/n}E.
\]
Thus to obtain $e^{i2k\pi/n}S$ we have simply removed a sector with vertex
$e^{i2\left(  k-1\right)  \pi/n},$ namely $e^{i2k\pi/n}E,$ from the set
$e^{i2k\pi/n}S^{\prime}$ and included its rotation $e^{i2\left(  k+1\right)
\pi/n}E$ to obtain $e^{i2\left(  k+1\right)  \pi/n}S^{\prime}.$ Because the
sets $e^{i2k\pi/n}S^{\prime}$ are disjoint, so are the sets $\left\{
e^{i2k\pi/n}S:k=0,1,..,n-1\right\}  .$

If $\beta=0$ then $p=0$ and the argument above does not apply, but a simple
proof follows if we adapt the proof for $\beta\in\left(  0,\pi/2\right)  $ and
define the sector $S$ to be $S=\left\{  z\in%
\mathbb{C}
:\arg z\in\left(  0,2\pi/n\right)  \right\}  ,$ in which case the sectors
$\left\{  e^{2k\pi i/n}S:k=0,1,...,n-1\right\}  $ are clearly disjoint$.$

If $\beta=\pi/2,$ then we adapt the proof so that $L_{k}$ is the line passing
through node $\alpha_{\pi/2}\left(  2k\pi/n\right)  $ with argument
$\alpha_{\pi/2}^{\prime}\left(  2k\pi/n\right)  ^{+}$. The sectors then are
bounded by the lines $L_{k},$ and the same same argument applies to show
$\tilde{\alpha}_{\pi/2}$ is one to one on $S$ (note that $\arg\tilde{\alpha
}_{\pi/2}\left(  \left(  0,2\pi/n\right)  \right)  \subset\left(  \pi
/2+\pi/\left(  2n\right)  ,\pi\right)  $ rather than $\arg\alpha_{\beta
}^{\prime}\left(  \left(  0,2\pi/n\right)  \right)  \subset\left(  2\pi
/n,\pi\right)  $).
\end{proof}

\begin{theorem}
\label{univalent}For any $n\geq3$ in $%
\mathbb{N}
,$ the harmonic functions $f_{\beta}\left(  z\right)  $, $\beta\in%
\mathbb{R}
$ defined in Definition \ref{defnharmonic} are univalent, and so they are
harmonic mappings.
\end{theorem}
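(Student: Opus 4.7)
The plan is to apply the argument principle for harmonic mappings (as found in Section 2.3 of \cite{PeterHMBook}) in combination with Lemma \ref{unibdry}. Before applying it, I would use the symmetry relations already established to reduce the problem: by Corollary \ref{transit}, any $f_\beta$ with $\beta \in \mathbb{R}$ equals a rotation of $f_{\beta_0}$ for some $\beta_0 \in (-\pi/2, \pi/2]$ pre-composed with a rotation of $U$; and by Proposition \ref{parametrize}(ii), $f_\beta(\bar z) = \overline{f_{-\beta}(z)}$, so univalence of $f_{-\beta}$ follows from univalence of $f_\beta$. Thus it suffices to prove the theorem for $\beta \in [0, \pi/2]$.

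Next I would verify that $f_\beta$ is sense-preserving on $U$. By Theorem \ref{symmetry}(i), the dilatation is $\omega(z) = z^{n-2}$, so $|\omega(z)| = |z|^{n-2} < 1$ on $U$ for every $n \geq 3$. Hence $|g_n'(z)| < |h_n'(z)|$ throughout $U$, the Jacobian $J_{f_\beta}$ is strictly positive, and L\"ewy's theorem guarantees that $f_\beta$ is locally one-to-one and sense-preserving on $U$.

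For $\beta \in [0, \pi/2)$, Lemma \ref{unibdry} tells us that $\alpha_\beta(t) = f_\beta(e^{it})$ is a simple, positively oriented closed curve $\Gamma_\beta$ bounding a Jordan domain $D_\beta$. For any $w_0 \in D_\beta$, the winding number of $\Gamma_\beta$ about $w_0$ is $+1$; for $w_0$ in the unbounded complementary component, it is $0$. The argument principle for harmonic functions then gives that the number of sense-preserving zeros of $f_\beta - w_0$ in $U$ (minus sense-reversing zeros) equals this winding number. Since all zeros are sense-preserving by the previous step, $f_\beta - w_0$ has exactly one zero in $U$ when $w_0 \in D_\beta$ and none when $w_0 \notin \overline{D_\beta}$. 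This establishes that $f_\beta$ is a bijection of $U$ onto $D_\beta$, hence univalent.

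The case $\beta = \pi/2$ is the main obstacle, because the raw boundary $\alpha_{\pi/2}$ has $n$ arcs of constancy and is not a Jordan curve as a parametrization, even though Lemma \ref{unibdry} shows that the reparametrization $\tilde\alpha_{\pi/2}$ traces a simple closed curve $\Gamma_{\pi/2}$ bounding a Jordan domain $D_{\pi/2}$. I would handle this by applying the argument principle to the original parametrization $\alpha_{\pi/2}$: on each arc of constancy the contribution to the winding number is $0$ (since $\alpha_{\pi/2}$ holds a constant node value there), so the winding number of $\alpha_{\pi/2}$ about any $w_0$ equals that of $\tilde\alpha_{\pi/2}$ about $w_0$, which is $+1$ for $w_0 \in D_{\pi/2}$ and $0$ for $w_0$ outside $\overline{D_{\pi/2}}$. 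Alternatively one can approximate $f_{\pi/2}$ by $f_\beta$ with $\beta \nearrow \pi/2$, each of which has been shown univalent, and use Hurwitz's theorem to pass the univalence to the limit (the dilatation bound $|\omega| < 1$ is uniform on compact subsets of $U$, so the limit is non-constant and sense-preserving). Either approach yields that $f_{\pi/2}$ is univalent, completing the proof.
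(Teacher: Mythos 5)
Your proposal is correct and follows essentially the same route as the paper: reduce to $\beta\in[0,\pi/2]$ via Corollary \ref{transit} and Proposition \ref{parametrize}(ii), verify there are no singular zeros from $|\omega|=|z|^{n-2}<1$, and apply the argument principle for harmonic functions to $f_\beta-w_0$ using the simple closed boundary curve from Lemma \ref{unibdry}, with the observation that the arcs of constancy of $\alpha_{\pi/2}$ do not change the winding number. The extra Hurwitz-type limiting argument you offer for $\beta=\pi/2$ is a reasonable alternative but is not needed, and is not what the paper does.
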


\begin{proof}
We apply the argument principle for harmonic functions of
\cite{DurenHenLaugesen} to obtain univalence of $f_{\beta}.$ We proved in
Lemma \ref{unibdry} that for $\beta\in\lbrack0,\pi/2),$ the boundary curve
$\alpha_{\beta}\left(  t\right)  =f_{\beta}\left(  e^{it}\right)  $ on
$\partial U$ is a simple closed curve about the origin$.$ Although
$\alpha_{\pi/2}$ has arcs of constancy, the winding number about each point
enclosed by the curve $\alpha_{\beta}$ is still $1,$ so the argument principle
applies for $\beta=\pi/2$. Choose an arbitrary point $w_{0}$ enclosed by the
curve $\alpha_{\beta}.$ Then define $f=f_{\beta}-w_{0},$ a harmonic function
continuous in $\bar{U}$, which does not have a zero on $\partial U$. Moreover,
since $\left\vert \omega_{f}\right\vert =\left\vert \omega\right\vert <1$ in
$U$, $f$ does not have any singular zeros in $U$. We see that $f\left(
e^{it}\right)  =\alpha_{\beta}\left(  t\right)  -w_{0}$ has index 1 about the
origin for $t\in\partial U$, so it follows from the argument principle that
$f$ has exactly one zero in $U$. Thus $f_{\beta}\left(  z_{0}\right)  =w_{0}$
for a unique $z_{0}\in U.$ Since the choice of $z_{0}\in U$ was arbitrary, we
see that $f_{\beta}$ is onto the region enclosed by $\alpha_{\beta}.$ If
$w_{1}$ is in the exterior of the region enclosed by the curve $\alpha_{\beta
},$ then consider the function $\tilde{f}=$ $f_{\beta}-w_{1}.$ The harmonic
function $\tilde{f}$ satisfies the same hypotheses as did $f,$ but the winding
number of $\tilde{f}\left(  e^{it}\right)  $ about the origin is zero. Thus
there is no $z_{1}\in U$ for which $f_{\beta}\left(  z_{1}\right)  =w_{1}.$
Therefore $f_{\beta}\left(  U\right)  $ is contained in the interior of the
region bounded by $\alpha_{\beta}.$ If $\beta\in(-\pi/2,0),$ then $f_{\beta}$
is univalent since it is a reflection of $f_{-\beta}$ where $-\beta>0.$
Finally if $\beta\not \in (-\pi/2,\pi/2]$ then $f_{\beta}\left(  z\right)  $
is a rotation of $f_{\beta}\left(  e^{-il\pi/n}z\right)  $ for some $l\in%
\mathbb{Z}
$ by Corollary \ref{transit}, so is also univalent.
\end{proof}

\begin{remark}
On sectors $S$ of the closed unit disk for which $g_{n}\left(  S\right)  $ is
convex, we can show that $g_{n}$ is relatively more contractive than $h_{n},$
in that for any two points $z_{0}$ and $z_{1}$ in the sector,
\[
\left\vert h_{n}\left(  z_{0}\right)  -h_{n}\left(  z_{1}\right)  \right\vert
\geq\left\vert g_{n}\left(  z_{0}\right)  -g_{n}\left(  z_{1}\right)
\right\vert
\]
Moreover the inequality is strict when at least one point is not on $\partial
U.$ This fact can be used to show that $f_{\beta}$ is one to one in the
sectors $S$ of $\bar{U}$ with argument in the range $[0,\pi/n)$, or with
argument in the range $[\pi/n,2\pi/n).$ This leads to a direct proof of
univalence that does not rely on the argument principle.
\end{remark}

We now define a fundamental set, rotations of which make up the graph of the
rosette harmonic mapping $f_{\beta}$.\ This set has an interesting
decomposition into two curvilinear triangles when $\beta=0,$ or into a
curvilinear triangle and curvilinear bigon for $\left\vert \beta\right\vert
\in(0,\pi/2]$. Moreover, the triangle is nowhere convex for $\left\vert
\beta\right\vert \in(0,\pi/2],$ and when $\beta=\pi/2$ the bigon has
reflectional symmetry.

\setcounter{figure}{7}

\begin{definition}
\label{fundamental}For an interval $I$ with $\left\vert I\right\vert <2\pi,$
define the sector $S_{I}$ of the closed unit disk $\bar{U}$ to be
$S_{I}=\left\{  z\in\bar{U}:\arg z\in I,\text{ }0\leq r\leq1\right\}  $. For
$n\in%
\mathbb{N}
,$ $n\geq3,$ $\beta\in(-\pi/2,\pi/2],$ let $f_{\beta}$ be the rosette mapping
defined in Definition \ref{defnharmonic}, and define the\textbf{\ fundamental
set of the rosette mapping }$f_{\beta}$ to be
\[
\mathcal{A}_{\beta,n}=f_{\beta}\left(  S_{[0,2\pi/n)}\right)  =\left\{
f_{\beta}\left(  z\right)  :0\leq\left\vert z\right\vert \leq1,\text{ }%
0\leq\arg z<2\pi/n\right\}  .
\]%
\begin{figure}[h]%
\centering
\includegraphics[
height=2.1793in,
width=4.3517in
]%
{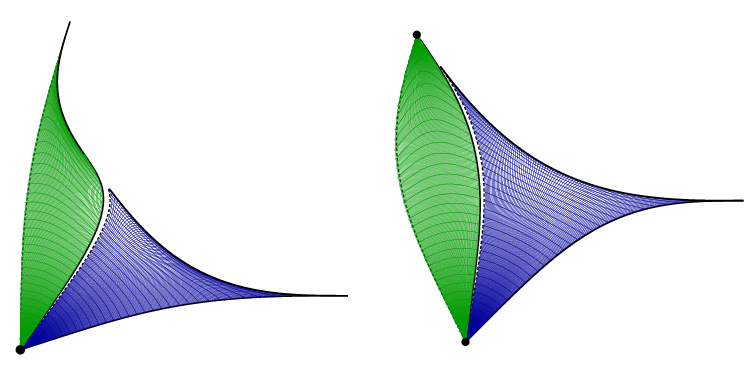}%
\caption{Fundamental sets $\mathcal{A}_{\pi/5,5}$ (left) and $\mathcal{A}%
_{\pi/2,5}$ (right)}%
\label{fund}%
\end{figure}

\end{definition}

The univalence of $f_{\beta}$ implies that the set $f_{\beta}\left(
S_{[a,b)}\right)  $ is bounded by the image of $f_{\beta}\left(  \partial
S_{[a,b)}\right)  ,$ for $\left\vert b-a\right\vert <2\pi.$ Thus the
fundamental set $\mathcal{A}_{\beta,n}$ is bounded by $f_{\beta}\left(
\partial S_{[0,2\pi/n)}\right)  .$ Definition \ref{fundamental} and the
following proposition apply only to $\beta\in(-\pi/2,\pi/2],$ but we will use
the fundamental sets $\mathcal{A}_{\beta,n}$ with $\beta$ from this restricted
range to express the graph of the rosette mapping $f_{\tilde{\beta}}\left(
\bar{U}\right)  ,$ for any $\tilde{\beta}\in%
\mathbb{R}
.$

For $\beta\in(-\pi/2,\pi/2]$ we note a further interesting decomposition in
Proposition \ref{fundunion} of the set $\mathcal{A}_{\beta,n}$ for $\beta
\neq0$ into a curvilinear triangle with sides that are nowhere convex, and a
curvilinear bigon, the latter having reflectional symmetry when $\beta=\pi/2$
(see Figure \ref{fund}).

\begin{proposition}
\label{fundunion} Let $n\in%
\mathbb{N}
,$ $n\geq3$ and $\beta\in(-\pi/2,\pi/2].$ Let $\mathcal{A}_{\beta,n}$ be the
fundamental set defined in \ref{fundamental} for the rosette mapping
$f_{\beta}$ of Definition \ref{defnhg}. The set $\mathcal{A}_{\beta,n}$ is a
curvilinear triangle subtending an angle of $2\pi/n$ at the origin. Moreover,
$\mathcal{A}_{\beta,n}$ is the disjoint union%
\[
\mathcal{A}_{\beta,n}=f_{\beta}\left(  S_{[0,\pi/n)}\right)  \cup f_{\beta
}\left(  S_{[\pi/n,2\pi/n)}\right)  .
\]
(i) For $\beta\in\left(  0,\pi/2\right)  $, the set $f_{\beta}\left(  \partial
S_{[0,\pi/n)}\right)  $ is a curvilinear triangle with sides that are nowhere
convex$.$ The set $f_{\beta}\left(  \partial S_{[\pi/n,2\pi/n)}\right)  $ is a
curvilinear bigon, where one side is strictly convex, and the other side has a
single inflection point. The angle subtended at the origin, both by the bigon
and the triangle, is $\pi/n.$ The remaining angles in the bigon and triangle
are $0.$ \newline(ii)\ If $\beta=\pi/2$ then the statements in (i) hold,
except the angle subtended by the bigon at the node of $f_{\pi/2}$ is
$\pi/2-\pi/n.$ Additionally, the bigon $f_{\pi/2}\left(  \partial
S_{[\pi/n,2\pi/n)}\right)  $ is symmetric about the line through the vertices
of the bigon$. $\newline(iii) When $\beta\in\left(  -\pi/2,0\right)  $, the
conclusions of (i) except $f_{\beta}\left(  \partial S_{[\pi/n,2\pi
/n)}\right)  $ is the curvilinear triangle and $f_{\beta}\left(  \partial
S_{[0,\pi/n)}\right)  $ is the curvilinear bigon.\newline(iv)\ When $\beta=0,$
the sides of $f_{\beta}\left(  \partial S_{[0,\pi/n)}\right)  $ and $f_{\beta
}\left(  \partial S_{[\pi/n,2\pi/n)}\right)  $ incident with the origin are
line segments, and the curvilinear triangles $f_{\beta}\left(  \partial
S_{[0,\pi/n)}\right)  $ and $f_{\beta}\left(  \partial S_{[\pi/n,2\pi
/n)}\right)  $ are reflections of one another in the line containing their
common side.
\end{proposition}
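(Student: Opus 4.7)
The plan is to use the univalence of $f_\beta$ (Theorem \ref{univalent}) to reduce everything to a careful accounting of tangent directions on $\partial S_{[0,2\pi/n)}$ under $f_\beta$. By univalence, $\partial \mathcal{A}_{\beta,n} = f_\beta(\partial S_{[0,2\pi/n)})$, which consists of two radial images and the boundary arc. Lemma \ref{convexconcave}(ii), combined with the rotational symmetry from Theorem \ref{symmetry}(ii), gives initial radial tangent arguments $\beta/2$ and $\beta/2 + 2\pi/n$ at the origin, so the interior angle there is $2\pi/n$. Corollary \ref{cuspdir} places cusps with axes of arguments $0$ and $2\pi/n$ at the boundary vertices $f_\beta(1)$ and $f_\beta(e^{i2\pi/n})$, making each interior angle zero. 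Hence $\mathcal{A}_{\beta,n}$ is a curvilinear triangle with origin-angle $2\pi/n$.

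The decisive point for the subdivision at $\arg z = \pi/n$ is the behavior at the intermediate node $f_\beta(e^{i\pi/n})$. For $\beta \in (0,\pi/2]$, Lemma \ref{convexconcave}(ii) gives $\lim_{r \to 1^-}\arg\partial_r f_\beta(re^{i\pi/n}) = \pi/2 + \pi/n$, matching $\arg\alpha'_\beta$ on both sides of $t = \pi/n$ in equation (\ref{nodedir}). In the counterclockwise traversal of $f_\beta(\partial S_{[0,\pi/n)})$ (origin $\to$ cusp $f_\beta(1) \to$ node $\to$ origin) one approaches the node along the arc in direction $\pi/2 + \pi/n$ and leaves along the reversed radial in the opposite direction, so the node is a cusp-like vertex of interior angle $0$ and the piece is a curvilinear triangle with three vertices (the origin at angle $\pi/n$ and two at angle $0$). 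In the counterclockwise traversal of $f_\beta(\partial S_{[\pi/n,2\pi/n)})$ (origin $\to$ node $\to$ cusp $f_\beta(e^{i2\pi/n}) \to$ origin) one enters the node along the radial and exits along the arc in the same forward direction $\pi/2 + \pi/n$, so the combined side is $C^1$ at the node, which ceases to be a vertex, and the piece is a bigon with vertices only at the origin (angle $\pi/n$) and the cusp (angle $0$).

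The convexity claims in part (i) reduce to monitoring $d(\arg\text{tangent})/dt$ along each analytic arc. By Lemma \ref{convexconcave}(ii) and equation (\ref{compass}), this derivative is strictly negative on $f_\beta([0,1])$ and on the boundary arc, and strictly positive on the radials $f_\beta([0,e^{ij\pi/n}])$ for $j = 1,2$. After matching signs against the counterclockwise orientation of each half (reversing the radials where necessary) and against the side of each tangent line on which the interior lies, one obtains: every side of the triangle curves away from its interior (nowhere convex); the lone radial side of the bigon curves into its interior throughout (strictly convex); and the combined radial-plus-arc side of the bigon changes curvature sign exactly at the node (one inflection). For part (ii) with $\beta = \pi/2$, Theorem \ref{fbdry}(ii) collapses the bigon's boundary arc to the single node point, leaving two radial sides meeting at the node with tangent arguments $\pi/2 + \pi/n$ and $2\pi/n$ (the latter by rotational symmetry), so the interior angle there is $\pi/2 - \pi/n$. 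The reflective symmetry of the bigon follows by combining Theorem \ref{symmetry}(iii) with two applications of $n$-fold rotational symmetry to shift the reflection axes to the pair $\bigl(3\pi/(2n),\pi/4 + 3\pi/(2n)\bigr)$; the latter axis passes through both vertices of the bigon.

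Part (iv) for $\beta = 0$ uses Corollary \ref{rayshg}: the radial images are straight line segments, so the tangent-matching at the node breaks (the straight radial tangent is $\pi/n$, not $\pi/2 + \pi/n$), the node is a vertex in each piece, and both pieces are curvilinear triangles. The reflective symmetry $f_0(\bar z) = \overline{f_0(z)}$ of Theorem \ref{symmetry}(iii), composed with rotation by $e^{i2\pi/n}$, sends $S_{[0,\pi/n)}$ to $S_{[\pi/n,2\pi/n)}$ and realizes a range reflection across the common straight side. Part (iii) for $\beta \in (-\pi/2,0)$ follows from Proposition \ref{parametrize}(ii): reflection in the real axis sends $f_\beta$ to $f_{-\beta}$ and, after reindexing by rotational symmetry, swaps the triangle and bigon halves. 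The main obstacle is the convexity analysis in part (i): monotonicity of each tangent argument alone is not enough --- one must also determine on which side of each tangent line the interior of the relevant half sits. I would fix this by computing the curvature sign at a single convenient point on each arc (for instance near a cusp or near the origin) and propagating the sign globally via the monotonicity statements above.
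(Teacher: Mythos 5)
Your proposal follows essentially the same route as the paper's proof: univalence reduces everything to the boundary $f_{\beta}\left(\partial S\right)$; Lemma \ref{convexconcave}(ii) together with rotational symmetry gives the radial tangent directions at the origin; the tangent matching at the node $f_{\beta}\left(e^{i\pi/n}\right)$ (equation (\ref{nodedir}) against the limit $\pi/2+\pi/n$ of the radial tangent) is exactly what the paper uses to merge the radial image and the boundary arc into the single smooth side of the bigon; and the convexity and inflection claims come from the opposing monotonicities of the tangent arguments, with your treatment of parts (ii)--(iv) likewise matching the paper's use of the reflection axes from Theorem \ref{symmetry}(iii) and the linearity of the radial images when $\beta=0$. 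One small slip: the tangent argument along $f_{\beta}\left(re^{i2\pi/n}\right)$ is strictly \emph{decreasing}, not increasing, since that side is the rotation $e^{i2\pi/n}f_{\beta}\left(r\right)$ of a curve with decreasing tangent argument; this is harmless (strict monotonicity of either sign yields strict convexity of that side, which is precisely how the paper argues it), and the residual side-of-tangent-line bookkeeping you flag for the "nowhere convex" claim is also left implicit in the paper's own proof, so your proposed fix of anchoring the curvature sign at one point and propagating by monotonicity is a reasonable completion rather than a departure.
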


\begin{proof}
We let $\alpha_{\beta}\left(  t\right)  =f_{\beta}\left(  e^{it}\right)  $ on
$\partial U.$ The proofs that follow combine facts about the image of
$f_{\beta}$ along radial lines of $U$ in Lemma \ref{convexconcave}, and limits
of $\arg\alpha_{\beta}^{\prime}$ at cusps and nodes. The image $\mathcal{A}%
_{\beta,n}=f_{\beta}\left(  S_{[0,2\pi/n)}\right)  $ is bounded by $f_{\beta
}\left(  \partial S_{[0,2\pi/n)}\right)  .$ Since $f_{\beta}\left(  1\right)
$ and $f_{\beta}\left(  e^{i2\pi/n}\right)  $ are distinct, $f_{\beta}\left(
\partial S_{[0,2\pi/n)}\right)  $ is a curvilinear triangle (even when
$\beta=\pi/2$). At the origin, Lemma \ref{convexconcave} (ii) states that
$f_{\beta}\left(  r\right)  $ and $f_{\beta}\left(  re^{i\pi/n}\right)  $ have
tangents with arguments $\beta/2$ and $\beta/2+\pi/n.$ The side $f_{\beta
}\left(  re^{i2\pi/n}\right)  $ is a rotation by $2\pi/n$ of $f_{\beta}\left(
r\right)  ,$ and so the argument of its tangent at $0$ is $\beta/2+2\pi/n.$
Thus the angle subtended by the vertex of $\mathcal{A}_{\beta,n}$ at the
origin is $2\pi/n$. We now prove (i). Our observations about $\frac{\partial
}{\partial r}\arg f_{\beta}\left(  r\right)  ,$ $\frac{\partial}{\partial
r}\arg f_{\beta}\left(  re^{i\pi/n}\right)  ,$ and $\frac{\partial}{\partial
r}\arg f_{\beta}\left(  re^{i2\pi/n}\right)  $ at the origin show that the
angle subtended by $f_{\beta}\left(  \partial S_{[0,\pi/n)}\right)  $ and by
$f_{\beta}\left(  \partial S_{[\pi/n,2\pi/n)}\right)  $ at the origin is
$\pi/n.$ At the boundary, we have by equation (\ref{nodedir}) with $k=1$ in
Corollary \ref{cuspdir} that $\arg\alpha_{\beta}^{\prime}\left(  t\right)
\rightarrow\pi/2+\pi/n$ as $t\rightarrow\pi/n^{+}.$ This matches
$\frac{\partial}{\partial r}\arg f_{\beta}\left(  re^{i\pi/n}\right)  $ at
$f_{\beta}\left(  e^{i\pi/n}\right)  =\alpha_{\beta}\left(  \pi/n\right)  $,
by Lemma \ref{convexconcave} (ii)$.$ Thus $f_{\beta}\left(  re^{i\pi
/n}\right)  $ and $\alpha_{\beta}\left(  t\right)  $ join to form a single
smooth curve, together forming one side of $f_{\beta}\left(  \partial
S_{[0,2\pi/n)}\right)  ,$ whence the bigon. From Lemma \ref{convexconcave}
(ii), $\frac{\partial}{\partial r}\arg f_{\beta}\left(  re^{i\pi/n}\right)  $
is strictly increasing, yet $\arg\alpha_{\beta}^{\prime}\left(  e^{it}\right)
$ is strictly decreasing by (\ref{compass}). Thus $f_{\beta}\left(  e^{i\pi
/n}\right)  $ is an inflection point where curvature changes sign on the
bigon. We also have $\frac{\partial}{\partial r}\arg f_{\beta}\left(
r\right)  $ is strictly decreasing, while $\frac{\partial}{\partial r}\arg
f_{\beta}\left(  re^{i\pi/n}\right)  $ is strictly increasing. Thus the sides
of the triangle $f_{\beta}\left(  \partial S_{[0,\pi/n)}\right)  $ are nowhere
convex. Returning to the bigon, we now see that its second side $f_{\beta
}\left(  re^{i2\pi/n}\right)  $ is strictly convex, since $f_{\beta}\left(
re^{i2\pi/n}\right)  $ is a rotation about the origin of $f_{\beta}\left(
r\right)  $. We complete the proof of (i) by observing that when $\beta
\in\lbrack0,\pi/2),$ the vertices $f_{\beta}\left(  e^{i2\pi/n}\right)  $ and
$f_{\beta}\left(  1\right)  $ are cusps of $\alpha_{\beta}, $ and the angle
subtended at $f_{\beta}\left(  e^{i2\pi/n}\right)  $ and $f_{\beta}\left(
1\right)  $ is $0.$

Now we turn to (ii), with $\beta=\pi/2.$ We observed reflectional symmetry in
Theorem \ref{symmetry} (iii) of $f_{\pi/2}\left(  re^{-i\pi/n}\right)  $ and
$f_{\pi/2}\left(  r\right)  $ about the axis through $0$ and the node
$f_{\pi/2}\left(  1\right)  .$ Rotating by $2\pi/n$, the ray through $0$ and
the node $f_{\pi/2}\left(  e^{i2\pi/n}\right)  $ is also an axis of
reflectional symmetry, in which the sides $f\left(  re^{i\pi/n}\right)  $ and
$f\left(  re^{i2\pi/n}\right)  $ of the bigon are reflected. To see the
subtended angle at the boundary $\alpha_{\pi/2}$, we use Lemma
\ref{convexconcave} (ii) as above to see the tangent of $f_{\pi/2}\left(
re^{i\pi/n}\right)  $ approaches $\pi/2+\pi/n$ as $r\rightarrow1^{-}.$ By the
formula in the proof of Corollary \ref{nodes} with $k=1$, $\arg\tilde{\alpha
}_{\pi/2}^{\prime}\left(  t\right)  $ also approaches $\pi/2+\pi/n$ as
$t\rightarrow2\pi/n^{-}.$ We conclude that the side $f_{\pi/2}\left(
re^{i\pi/n}\right)  $ of the bigon becomes tangent to the boundary
$\tilde{\alpha}_{\pi/2}$ at the node $f_{\pi/2}\left(  e^{i\pi/n}\right)
=f_{\pi/2}\left(  e^{i2\pi/n}\right)  $ as $t\rightarrow2\pi/n^{-}.$ By
reflectional symmetry, the second side of the bigon $f_{\pi/2}\left(
re^{i2\pi/n}\right)  $ becomes tangent to the boundary $\tilde{\alpha}_{\pi
/2}$ at as $t\rightarrow2\pi/n^{+}.$ Thus the angle subtended in the bigon at
$f_{\pi/2}\left(  e^{i2\pi/n}\right)  $ is the same as the interior angle of
the nodes of $\tilde{\alpha}_{\pi/2},$ namely $\pi/2-\pi/n$ by Corollary
\ref{nodes}$.$

The statements in (iii) follow readily using reflectional symmetry. For
(iv)\ we already noted in Lemma \ref{convexconcave} (iii) that the images of
$f_{0}\left(  r\right)  ,$ $f_{0}\left(  re^{i\pi/n}\right)  $ and
$f_{0}\left(  re^{i2\pi/n}\right)  $ are linear. Reflectional symmetry was
established in Corollary \ref{noreflect}.
\end{proof}

We finish by decomposing the graph of an arbitrary rosette mapping$\ $into a
disjoint union of $n$ rotations of a fundamental set of Definition
\ref{fundamental}.

\begin{corollary}
Let $n\geq3$ and $\tilde{\beta}\in%
\mathbb{R}
.$ The set $f_{\tilde{\beta}}\left(  \bar{U}\right)  $ can be written as a
disjoint union of rotations of $\mathcal{A}_{\beta,n}.$ Specifically if
$\tilde{\beta}=\beta+l\pi$ for $\beta\in(-\pi/2,\pi/2]$ then
\[
f_{\tilde{\beta}}\left(  \bar{U}\right)  =i^{l}%
{\displaystyle\bigcup_{k=1}^{n}}
e^{i\left(  2k+l\right)  \pi/n}\mathcal{A}_{\beta,n}.
\]

\end{corollary}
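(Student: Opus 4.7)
The plan is to combine the $n$-fold rotational symmetry of $f_\beta$ (Theorem \ref{symmetry} (ii)) with the univalence (Theorem \ref{univalent}) to first decompose $f_\beta(\bar U)$ into rotated copies of the fundamental set $\mathcal{A}_{\beta,n}$, and then use Corollary \ref{transit} to reduce the general case $\tilde\beta=\beta+l\pi$ to this case.

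First, observe that the closed disk $\bar U$ is the union of the sectors $S_{[2k\pi/n,\,2(k+1)\pi/n)}$ for $k=0,1,\dots,n-1$, and these sectors are pairwise disjoint away from the origin. By $n$-fold rotational symmetry of $f_\beta$, namely $f_\beta(e^{i2k\pi/n}z)=e^{i2k\pi/n}f_\beta(z)$, each of these sectors maps under $f_\beta$ to $e^{i2k\pi/n}\mathcal{A}_{\beta,n}$. Because $f_\beta$ is univalent on $\bar U$ (Theorem \ref{univalent}), the images of these pairwise disjoint sectors are pairwise disjoint, giving the disjoint union
\[
f_\beta(\bar U)=\bigcup_{k=1}^{n}e^{i2k\pi/n}\mathcal{A}_{\beta,n},
\]
where I have simply shifted the index from $k=0,\dots,n-1$ to $k=1,\dots,n$ using that $e^{i2n\pi/n}=1$.

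Next, for arbitrary $\tilde\beta\in\mathbb{R}$ write $\tilde\beta=\beta+l\pi$ with $\beta\in(-\pi/2,\pi/2]$. By Corollary \ref{transit},
\[
f_{\tilde\beta}(z)=f_{\beta+l\pi}(z)=e^{il(\pi/n+\pi/2)}f_\beta\bigl(e^{-il\pi/n}z\bigr).
\]
Since $e^{-il\pi/n}\bar U=\bar U$, applying $f_{\tilde\beta}$ to $\bar U$ yields
\[
f_{\tilde\beta}(\bar U)=e^{il\pi/n}\,e^{il\pi/2}\,f_\beta(\bar U)=i^{l}\,e^{il\pi/n}\bigcup_{k=1}^{n}e^{i2k\pi/n}\mathcal{A}_{\beta,n}=i^{l}\bigcup_{k=1}^{n}e^{i(2k+l)\pi/n}\mathcal{A}_{\beta,n},
\]
which is the required formula, and the union remains disjoint because multiplication by the scalar $i^{l}e^{il\pi/n}$ is a bijection.

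There is essentially no obstacle here: the content of the corollary is the careful bookkeeping required to combine the translation-of-$\beta$ identity from Corollary \ref{transit} with the geometric tiling by rotated fundamental sets. The only point worth double-checking is the index shift $e^{i2k\pi/n}\mapsto e^{i(2k+l)\pi/n}$ after factoring out $i^l$, which is where the cyclic reindexing consolidates all factors of $e^{i\pi/n}$ coming from the domain rotation in Corollary \ref{transit}.
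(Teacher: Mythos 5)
Your proof is correct and follows essentially the same route as the paper: reduce to $\beta\in(-\pi/2,\pi/2]$ via Corollary \ref{transit} and then distribute the rotation factor $i^{l}e^{il\pi/n}$ across the union of rotated fundamental sets. The only difference is that you explicitly justify the decomposition $f_{\beta}(\bar{U})=\bigcup_{k=1}^{n}e^{i2k\pi/n}\mathcal{A}_{\beta,n}$ using rotational symmetry and univalence, a step the paper's proof takes for granted.
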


\begin{proof}
By Corollary \ref{transit} we have $f_{\tilde{\beta}}\left(  z\right)
=e^{il\left(  \pi/n+\pi/2\right)  }f_{\beta}\left(  e^{-il\pi/n}z\right)  .$
Thus
\[
f_{\tilde{\beta}}\left(  \bar{U}\right)  =f_{\tilde{\beta}}\left(  e^{il}%
\bar{U}\right)  =e^{il\left(  \pi/n+\pi/2\right)  }f_{\beta}\left(  \bar
{U}\right)  =i^{l}e^{il\pi/n}%
{\displaystyle\bigcup_{k=1}^{n}}
e^{i2k\pi/n}\mathcal{A}_{\beta,n}.
\]

\end{proof}

\newpage

\bibliographystyle{amsalpha}
\bibliography{acompat,JaneHarmonicRefs,JaneHyperGeo,JaneMinSurfApps}

\newcommand{\etalchar}[1]{$^{#1}$}
\newif\ifabfull\abfulltrue
\providecommand{\bysame}{\leavevmode\hbox to3em{\hrulefill}\thinspace}
\providecommand{\MR}{\relax\ifhmode\unskip\space\fi MR }
\providecommand{\MRhref}[2]{%
  \href{http://www.ams.org/mathscinet-getitem?mr=#1}{#2}
}
\providecommand{\href}[2]{#2}
\begin{thebibliography}{BDM{\etalchar{+}}12}

\bibitem[AM21]{AbdullahMcDougall}
Sohair Abdullah and Jane McDougall, \emph{Rosette minimal surfaces}, 2021, In
  Preparation.

\bibitem[BDM{\etalchar{+}}12]{ECA}
Michael~A. Brilleslyper, Michael~J. Dorff, Jane~M. McDougall, James~S. Rolf,
  Lisbeth~E. Schaubroeck, Richard~L. Stankewitz, and Kenneth Stephenson,
  \emph{Explorations in complex analysis}, Classroom Resource Materials Series,
  Mathematical Association of America, Washington, DC, 2012. \MR{2963949}

\bibitem[BLW15]{MappingFull}
Daoud Bshouty, Erik Lundberg, and Allen Weitsman, \emph{A solution to
  {S}heil-{S}mall's harmonic mapping problem for polygons}, Proc. Amer. Math.
  Soc. \textbf{143} (2015), no.~12, 5219--5225. \MR{3411139}

\bibitem[CSS84]{ClunieSheilSmall}
J.~Clunie and T.~Sheil-Small, \emph{Harmonic univalent functions}, Ann. Acad.
  Sci. Fenn. Ser. A I Math. \textbf{9} (1984), 3--25. \MR{85i:30014}

\bibitem[DHL96]{DurenHenLaugesen}
Peter Duren, Walter Hengartner, and Richard~S. Laugesen, \emph{The argument
  principle for harmonic functions}, Amer. Math. Monthly \textbf{103} (1996),
  no.~5, 411--415. \MR{97f:30002}

\bibitem[DMS05]{JaneBethPeter}
Peter Duren, Jane McDougall, and Lisbeth Schaubroeck, \emph{Harmonic mappings
  onto stars}, J. Math. Anal. Appl. \textbf{307} (2005), no.~1, 312--320.
  \MR{2138992 (2006c:31002)}

\bibitem[DT00]{DurenThygerson}
Peter Duren and William~R. Thygerson, \emph{Harmonic mappings related to
  {S}cherk's saddle-tower minimal surfaces}, Rocky Mountain J. Math.
  \textbf{30} (2000), no.~2, 555--564. \MR{MR1786997 (2001i:58019)}

\bibitem[Dur04]{PeterHMBook}
Peter Duren, \emph{Harmonic mappings in the plane}, Cambridge Tracts in
  Mathematics, vol. 156, Cambridge University Press, Cambridge, 2004. \MR{2 048
  384}

\bibitem[HS86]{HenSchob}
W.~Hengartner and G.~Schober, \emph{On the boundary behavior of
  orientation-preserving harmonic mappings}, Complex Variables Theory Appl.
  \textbf{5} (1986), no.~2-4, 197--208. \MR{846488}

\bibitem[JS66]{JS}
Howard Jenkins and James Serrin, \emph{Variational problems of minimal surface
  type. {II}. {B}oundary value problems for the minimal surface equation},
  Arch. Rational Mech. Anal. \textbf{21} (1966), 321--342. \MR{MR0190811 (32
  \#8221)}

\bibitem[Lew36]{Lewy}
H.~Lewy, \emph{On the non-vanishing of the jacobian in certain one-to-one
  mappings}, Bull. Amer. Math. Soc. \textbf{42} (1936), no.~1, 689--692.

\bibitem[McD12]{Mapping4}
Jane McDougall, \emph{Harmonic mappings with quadrilateral image}, Complex
  analysis and potential theory, CRM Proc. Lecture Notes, vol.~55, Amer. Math.
  Soc., Providence, RI, 2012, pp.~99--115. \MR{2986895}

\bibitem[MS61]{MerkesScott}
E.~P. Merkes and W.~T. Scott, \emph{Starlike hypergeometric functions}, Proc.
  Amer. Math. Soc. \textbf{12} (1961), 885--888. \MR{0143950}

\bibitem[MS08]{MSStars}
Jane McDougall and Lisbeth Schaubroeck, \emph{Minimal surfaces over stars}, J.
  Math. Anal. Appl. \textbf{340} (2008), no.~1, 721--738. \MR{2376192}

\bibitem[Rai71]{Rainville}
Earl~D. Rainville, \emph{Special functions}, first ed., Chelsea Publishing Co.,
  Bronx, N.Y., 1971. \MR{0393590}

\bibitem[Sch72]{Schwarz}
H.~A. Schwarz, \emph{Gesammelte mathematische {A}bhandlungen. {B}and {I},
  {II}}, Chelsea Publishing Co., Bronx, N.Y., 1972, Nachdruck in einem Band der
  Auflage von 1890. \MR{MR0392470 (52 \#13287)}

\bibitem[SS89]{Sheil-Small}
T.~Sheil-Small, \emph{On the {F}ourier series of a step function}, Michigan
  Math. J. \textbf{36} (1989), no.~3, 459--475. \MR{91b:30002}

\end{thebibliography}

\end{document}